\declaretheorem[numberwithin=section]{theorem}
\newcommand\ttimes{\mathbin{\ThisStyle{\ensurestackMath{%
  \stackengine{-1\LMpt}{\SavedStyle\times}
  {\SavedStyle_{\hstretch{.9}{\mkern1mu\sim}}}{O}{c}{F}{T}{S}}}}}
\newcommand\pmttimes{\mathbin{\ThisStyle{\ensurestackMath{%
  \stackengine{-1\LMpt}{\SavedStyle\times}
  {\SavedStyle_{\hstretch{.9}{\mkern1mu(\sim)}}}{O}{c}{F}{T}{S}}}}}
\newcounter{claimcounter}
\newenvironment{claim}[1]{
\refstepcounter{claimcounter}
\par\noindent\textbf{Claim \theclaimcounter:}\space#1}{}
\newenvironment{claimproof}[1]{\par\noindent\textbf{Proof:}\space#1}{\hfill $\blacksquare$}
\DeclarePairedDelimiter{\floor}{\lfloor}{\rfloor}
 \newcommand{\ZZ}{\mathbb{Z}}
\newcommand{\RR}{\mathbb{R}}
\newcommand{\sub}{\subseteq}
\renewcommand{\P}{\textbf{P}}
\newcommand{\NP}{\textbf{NP}}
\newcommand{\coNP}{\textbf{coNP}}
\newcommand{\NPcoNP}{$\text{\NP{}}\cap\text{co-\textbf{NP}}$}
\def\rdots{\rotatebox[origin=l]{29}{$\scriptscriptstyle\ldots\mathstrut$}}
\newtheorem{lemma}[theorem]{Lemma}
\newtheorem{prop}[theorem]{Proposition}
\newtheorem{corr}[theorem]{Corollary}
\theoremstyle{definition}
\newtheorem{defn}[theorem]{Definition}
\newtheorem{remark}[theorem]{Remark}
\newtheorem{example}[theorem]{Example}
\newtheorem{notation}[theorem]{Notation}
\newtheorem{convention}[theorem]{Convention}
\newtheorem{procedure}[theorem]{Procedure}
  \newcommand{\norm}[1]{\left\lVert#1\right\rVert}
     \newcommand{\del}{\partial}
\def\centerarc[#1](#2)(#3:#4:#5)% Syntax: [draw options] (center) (initial angle:final angle:radius)
\newcommand{\ths}{split handle structure}
\newcommand{\pns}{duplicate}
\title[Recognition of Seifert fibered spaces]{Recognition of Seifert fibered spaces with boundary is in NP}
\author{Adele Jackson}
\email{\href{mailto:adele.jackson@maths.ox.ac.uk}{adele.jackson@maths.ox.ac.uk}}
\date{June 28, 2023}
\def\subjclassname{\textup{2020} Mathematics Subject Classification}
\let\csname subjclassname@1991\endcsname=\subjclassname
\let\csname subjclassname@2000\endcsname=\subjclassname
\subjclass{57-08, 57K30, 57K35, 57Q15, 68Q25}
\begin{document}

\begin{abstract}
We show that the decision problem of recognising whether a triangulated 3-manifold admits a Seifert fibered structure with non-empty boundary is in NP.
We also show that the problem of deciding whether a given triangulated Seifert fibered space with non-empty boundary admits certain Seifert data is in \NPcoNP{}.
We do this by proving that in any triangulation of a Seifert fibered space with boundary there is both a fundamental horizontal surface of small degree and a complete collection of normal vertical annuli whose total weight is bounded by an exponential in the square of the triangulation size.
\end{abstract}

\maketitle

\tableofcontents

\section{Introduction}

One basic decision problem in low-dimensional topology is \textsc{3-manifold homeomorphism}: given two 3-manifolds, decide whether they are homeomorphic.
As a consequence of Perelman's proof of Thurston's geometrisation conjecture, this problem is decidable (several proofs have been given of this; see~\cite{ScottShort} for an overview).
In contrast, the $n$-manifold homeomorphism problem for fixed $n\geq 4$ is undecidable~\cite{Markov}.

The complexity of this problem, however, is not very well understood.
Kuperberg showed that there is an algorithm for \textsc{3-manifold homeomorphism} which has running time bounded by a bounded tower of exponentials -- that is, that it is at most of the order of $$ 2^{2^{\rdots^t}} $$ for some fixed height, where the height is not known~\cite{Kuperberg}.
It is also known, by using the JSJ graph, that the problem is at least as hard as finite graph isomorphism~\cite{LackenbyConditionallyHard}.

In practice, however, we are quite good at deciding \textsc{3-manifold homeomorphism}.
Matveev and Tarkaev produced a census of all 103,041 (orientable, closed, irreducible) 3-manifolds up to 13 tetrahedra in 2020~\cite{MatveevTarkaev}.
In the same year, Ben Burton tabulated the more than 300 million prime knots of up to 19 crossings, which involves showing that an associated list of 300 million knot complements does not contain duplicates~\cite{BurtonKnotTable19Crossings}.
Both groups work by enumerating all possible manifolds in the class they are considering, then for each pair, either showing they are homeomorphic using ad hoc methods or distinguishing them using invariants such as homology groups, Turaev-Viro invariants, and the number of subgroups of the fundamental group of a given index (see~\cite{BurtonClosedCensus11} and~\cite{MatveevTarkaev} for further discussion on the challenges of this approach).
If the 3-manifold homeomorphism problem took anything like the theoretical running time in practice, this work would have been impossible.

To better understand the difficulty of this problem, we ask: is \textsc{3-manifold homeomorphism} in \NP{}?
(The class \NP{} is a complexity class of decision problems, defined in Definition~\ref{defn:NP}).
To prove this, we might first hope to show that recognising hyperbolic and Seifert fibered manifolds is in \NP, and then attack the general case using geometrisation.

This paper concerns itself with the Seifert fibered case when the boundary is non-empty.
Seifert fibered spaces are 3-manifolds that are circle bundles over orbifolds.
We consider two algorithmic problems.
First, the \textsc{Seifert fibered space with boundary recognition} decision problem: given a triangulation of some 3-manifold, decide whether it admits a Seifert fibered structure with non-empty boundary.

\begin{restatable}{theorem}{SFSrecognitionNP}
The problem \textsc{Seifert fibered space with boundary recognition} is in \NP{}.
\label{thm:SFSrecognitionNP}
\end{restatable}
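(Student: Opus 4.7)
My plan is to prove that \textsc{Seifert fibered space with boundary recognition} is in \NP{} by supplying a polynomial-size witness together with a polynomial-time verifier, relying on the two existence statements promised in the abstract. The witness consists of: (i) the normal-coordinate vector of a fundamental horizontal surface $F$ of small degree; (ii) the normal-coordinate vectors of a complete collection $\mathcal{A}$ of normal vertical annuli whose total weight is at most $2^{O(n^2)}$, where $n$ is the number of tetrahedra; and (iii) the combinatorial Seifert data, namely the candidate base orbifold together with the Seifert pair $(\alpha_i,\beta_i)$ for each exceptional fiber. Standard bounds on fundamental normal surfaces give that the coordinates of $F$ fit in $\operatorname{poly}(n)$ bits, while the weight bound on $\mathcal{A}$ ensures each of its coordinates uses at most $O(n^2)$ bits; part (iii) is clearly of polynomial size, so the full witness is polynomial in $n$.

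The verifier performs the following checks. First, verify that $F$ and each component of $\mathcal{A}$ satisfy the non-negativity, integrality and matching equations of normal-surface theory, and compute their Euler characteristics from the normal coordinates to confirm that $F$ has the correct $\chi$ for a degree-$d$ cover of the candidate base orbifold and that each component of $\mathcal{A}$ is indeed an annulus. Next, cut $M$ open along $\mathcal{A}\cup\partial M$ and verify that every resulting piece is a solid torus, invoking the known polynomial-time solid torus recognition algorithm. Finally, read off the slopes of $F$ and of $\mathcal{A}$ on the boundary of each solid-torus piece, reconstruct its fibered Seifert invariants, and check that they glue to give the Seifert data in (iii). If all the checks pass, then $M$ admits the claimed Seifert fibration with non-empty boundary.

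The routine part is the verifier; the substance is the existence of a witness of polynomial encoding size whenever $M$ really is a Seifert fibered space with boundary, which is precisely the two technical theorems announced in the abstract. I expect the principal obstacle to be the weight bound on the vertical annuli, since one needs an entire complete family simultaneously rather than a single fundamental annulus, and the natural inductive constructions via normal-surface decomposition tend to multiply weights at each step. The strategy will presumably be to choose the annuli in a controlled order guided by the horizontal surface $F$ and an annular decomposition of the base orbifold, so that the weight accumulates only as a product of singly exponential factors and lands inside $2^{O(n^2)}$.
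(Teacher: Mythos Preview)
Your high-level plan is in the right spirit and close to what the paper does for the \emph{circle bundle} and \emph{multiplicity-two} subcases (Propositions~\ref{prop:circlebundlerecognitionNP} and~\ref{prop:recognisemultiplicitytwo}): a certificate consisting of vertical annuli plus a horizontal surface, cut along the annuli, certify the pieces are solid tori, then read off slopes via algebraic intersection numbers. However, there are two points to flag.

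First, a concrete error: there is no known polynomial-time algorithm for solid torus recognition. Ivanov's result places it in \NP{}, so your verifier cannot simply ``invoke'' recognition; the certificate must additionally include an \NP{} witness for each solid torus piece (a fundamental meridian disc, say), and the verifier checks that. Relatedly, you gloss over the cutting step: the annuli have weight exponential in $|\mathcal{T}|^2$, so you must appeal to something like Corollary~\ref{corr:cutalongsurfacestriangulation} (Agol--Hass--Thurston/Lackenby style cutting) to obtain a polynomial-size triangulation of $M\backslash\backslash\mathcal{A}$ and to track the boundary curves of $\mathcal{A}$ and $F$ as normal curves in it. You should also treat the exceptional cases $S^1\times D^2$, $T^2\times I$, $K\ttimes I$ separately, since neither Proposition~\ref{prop:fundamentalhorizontalsurface} nor the verticality conclusion of Corollary~\ref{corr:verticalannulicollectiontriangulation} applies to them.

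Second, for the general case with singular fibres of multiplicity greater than two, the paper takes a genuinely different route from yours. Rather than cutting out \emph{all} singular-fibre neighbourhoods via normal vertical annuli and carrying a degree-$p$ horizontal surface through the whole manifold, the paper invokes Theorem~\ref{thm:simplicialfibres}: the non-multiplicity-two singular fibres are \emph{simplicial} in $\mathcal{T}^{(82)}$, with simplicial solid-torus neighbourhoods and length-48 meridians. These are drilled out combinatorially (no normal-surface bounds needed), and the remaining manifold $M'$ is then a circle bundle or has only multiplicity-two fibres, so one only ever needs a degree-one or degree-two horizontal surface in $M'$. The Seifert invariants of the drilled fibres are recovered from intersection numbers of the short simplicial meridians against the fibre and horizontal curves supplied by the $M'$ certificate (Lemma~\ref{lemma:singularfibredata}). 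Your direct approach can be made to work with the above fixes, but the paper's detour through $\mathcal{T}^{(82)}$ sidesteps having to manage a high-degree horizontal surface and its interaction with the full annulus collection.
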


Second, the \textsc{naming Seifert fibered space with boundary} problem: given a set of Seifert data and a triangulation of a Seifert fibered 3-manifold with non-empty boundary, decide if the manifold is homeomorphic to the Seifert fibered space with that data.

\begin{restatable}{theorem}{SFSnamingNP}
The problem \textsc{naming Seifert fibered space with boundary} is in \NPcoNP{}.
\label{thm:SFSnamingNP}
\end{restatable}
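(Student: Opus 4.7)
The plan is to bootstrap Theorem~\ref{thm:SFSrecognitionNP}. The \NP{} certificate promised there will consist (by the strategy announced in the abstract) of a fundamental horizontal surface $H$ of polynomially bounded degree together with a complete system $\mathcal{V}$ of normal vertical annuli whose coordinate vectors have polynomial bit length. For \FNP{} I propose to reuse the same pair $(H,\mathcal{V})$ as the witness and to augment it with an explicit list of Seifert invariants $\bigl((g,b);(\alpha_1,\beta_1),\ldots,(\alpha_k,\beta_k)\bigr)$ together with the Euler number. The verifier's task is then twofold: first, rerun the recognition verifier to confirm that $(H,\mathcal{V})$ really does exhibit a Seifert fibration; second, check that the output invariants are the ones combinatorially encoded by $(H,\mathcal{V})$.

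To see that the Seifert data can indeed be read off in polynomial time, I would cut $M$ along $\mathcal{V}$. Completeness forces each component of $M\setminus\mathcal{V}$ to be either a fibered solid torus or a trivially fibered piece over a disk or an annulus, and the type of each piece can be identified directly from the normal coordinates of $\mathcal{V}$ together with the induced pattern of fiber-transverse arcs of $H$. Each solid torus piece that meets an exceptional fiber contributes a pair $(\alpha_i,\beta_i)$: the multiplicity $\alpha_i$ is the geometric intersection number of $H$ with the core of that torus, which is polynomial by the degree bound on $H$, while $\beta_i$ is determined by how the horizontal arcs of $H$ close up across the adjacent annuli of $\mathcal{V}$, i.e.\ by the slopes that $\partial H$ traces on those annuli. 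The base orbifold, including its underlying genus $g$, number of boundary components $b$, and cone points, is recovered from the quotient of $H$ by the restricted fibration; its combinatorial extraction is polynomial in the sizes of $H$ and $\mathcal{V}$. Finally the Euler number is assembled from the slopes along all annuli and boundary tori.

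The main obstacle is arithmetic control. Since the weight of $\mathcal{V}$ can be exponential in the triangulation size $n$, one cannot afford to walk through $\mathcal{V}$ disk-by-disk; instead every intersection number, slope, and quotient operation must be computed directly from the polynomial-bit-length normal vectors. This is the familiar territory of normal surface algorithms, where linear-algebraic operations on quadrilateral/triangle coordinates are known to run in polynomial time, so the obstruction is really bookkeeping: verifying that the pieces identified from $(H,\mathcal{V})$ glue up to give back the advertised Seifert fibration, and that the $(\alpha_i,\beta_i)$ so extracted are genuinely the invariants of the cut-off fibered solid tori, is the step requiring the most care. Granted this, $(H,\mathcal{V})$ together with the computed Seifert data constitutes a polynomial-size \FNP{} witness, establishing Theorem~\ref{thm:SFSnamingNP}.
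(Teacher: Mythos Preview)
Your proposal diverges from the paper's argument, and the divergence exposes a real gap. The paper does \emph{not} extract all the Seifert invariants from a single horizontal surface $H$ and annulus system $\mathcal{V}$ living in the input triangulation $\mathcal{T}$. Instead, its key structural input is Theorem~\ref{thm:simplicialfibres}: every singular fibre of multiplicity $\neq 2$ is simplicial in $\mathcal{T}^{(82)}$ with a simplicial solid-torus neighbourhood and a length-$48$ meridian. The certificate drills these neighbourhoods out, obtaining a manifold $M'$ with new boundary tori on which the meridians $\mu_i$ are short simplicial curves. The horizontal surface and fibres are then found \emph{in the triangulation of $M'$}, and the pair $(\alpha_i,\beta_i)$ is read off via Lemma~\ref{lemma:singularfibredata} as a ratio of algebraic intersection numbers $i(\eta_i,\mu_i)$, $i(\gamma_i,\mu_i)$, all of which live in the same boundary torus of the same triangulation.

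Your scheme instead cuts along the annuli $\mathcal{V}$ and asks for ``the geometric intersection number of $H$ with the core of that torus''. This is where the gap lies. The core of a solid-torus piece of $M\backslash\backslash\mathcal{V}$ is not a normal curve in $\mathcal{T}$, and the meridian disc you would find in the cut-open triangulation $\mathcal{T}'$ (via Lemma~\ref{lemma:solidtorusdata}) lives in $\mathcal{T}'$, not $\mathcal{T}$; meanwhile $\partial H$ lives in $\partial\mathcal{T}$ and gets sliced into arcs by $\partial\mathcal{V}$ rather than surviving as normal curves in $\partial\mathcal{T}'$. You never explain how to compare these objects across the two triangulations, and ``familiar territory of normal surface algorithms'' does not cover this: the standard polynomial-time intersection-number computations (Corollary~\ref{corr:intersectionnumberpolynomialtime}) require both curves to be normal in the same surface triangulation. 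The paper sidesteps exactly this difficulty by arranging that the meridians are short simplicial curves in the very triangulation where the horizontal surface is later found. There are also smaller issues --- the degree of $H$ is the lcm of the multiplicities, which is exponential rather than polynomial (though of polynomial bit-size); the intersection of $H$ with the core is $p/\alpha_i$, not $\alpha_i$; and Seifert fibered spaces with boundary have no well-defined Euler number --- but the triangulation-mismatch is the substantive missing idea.
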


The recognition problem for several other classes of 3-manifold is also known to be in \NP{}, and we will directly use some of these results in this paper.
The earliest work was in the setting of knot complements, where Hass, Lagarias and Pippenger used normal surface theory to prove that unknot recognition is in \NP{}~\cite{HassLagariasPippenger}.
Lackenby later showed that recognising whether a knot is non-trivial is also in \NP{}, or equivalently that unknot recognition is in \coNP{}~\cite{LackenbyCertificationKnottedness}.

Among the class of all orientable triangulated 3-manifolds, the first problem shown to be in \NP{} was 3-sphere recognition (by independent work of Ivanov and Schleimer~\cite{Schleimer3Sphere, Ivanov}), and Zentner proved that 3-sphere recognition is in \coNP{} assuming the generalised Riemann hypothesis holds~\cite{Zentner}.
Ivanov also proved that recognising simple manifolds such as $B^3$, $S^1\times S^2$, $\RR P^3$ and $S^1\times D^2$ is in \NP{}~\cite{Ivanov}, which implies that recognising the $S^2\times\RR$ geometry is in \NP{}.
Lackenby and Haraway-Hoffman between them showed that recognition of $I$-bundles over surfaces is in \NP{}~\cite{LackenbyCertificationKnottedness, HarawayHoffman}.
To give some further results on geometric classes, Lackenby and Schleimer showed that recognition of elliptic manifolds (the geometry of $S^3$) is in \NP{}~\cite{LacSchleimerElliptic}, and via some straightforward homology computations in the author's thesis, the recognition problem is in \NP{} for closed Euclidean and Nil manifolds~\cite[\S4.2]{JacksonThesis}; the Euclidean and Nil cases with boundary are covered by the results in this paper.
Haraway and Hoffman additionally proved that, among orientable irreducible manifolds with non-empty boundary, hyperbolic geometry recognition is in \coNP{} assuming the generalised Riemann hypothesis.

\begin{remark}
One application of Theorems~\ref{thm:SFSrecognitionNP} and~\ref{thm:SFSnamingNP} is to torus knot recognition.
Baldwin and Sivek have previously shown that, given a knot complement, deciding if it is the complement of a torus knot is in \NP{}, and that the problem is in \coNP{} assuming the generalised Riemann hypothesis~\cite{BaldwinSivek}.
As a consequence of Theorems~\ref{thm:SFSrecognitionNP} and~\ref{thm:SFSnamingNP}, we can strengthen the first part of this to the 3-manifold setting -- that is, prove that torus knot recognition among all triangulated 3-manifolds is in \NP{}.
The complement of the torus knot $T(p, q)$ is the Seifert fibered space $[D^2, -s/q, -r/p]$ where $r/s$ is a fraction such that $ps-qr = 1$.
Given an arbitrary triangulated 3-manifold, we can certify that it is $T(p, q)$ by certifying that it is Seifert fibered and has this Seifert data.
\end{remark}

After giving some background on computational complexity, normal surface theory and Seifert fibered spaces in Section~\ref{section:background}, we prove in Section~\ref{section:fundamentalhorizontalsurface} that there is a fundamental horizontal surface of minimal degree in all Seifert fibered spaces with non-empty boundary aside from a few exceptional cases.

We then need to establish the theory of \emph{\ths{}s}, which are defined in Section~\ref{section:ths} and are essential for the work in Section~\ref{section:fundamentalannulicollection}.
They arise from cutting handle structures along normal surfaces, then studying normal surfaces in the resulting manifold that are disjoint from the cut-open boundary.
We will use them to bound the size of a maximal collection of ``relatively'' fundamental surfaces.
In standard normal surface theory, we can bound the size of a single minimal essential surface by an exponential in the size of the triangulation $\norm{\mathcal{T}}$.
We wish to exhaust our manifold with such surfaces.
Na\"ively applying normal surface theory, the bound we would obtain would be a tower of exponentials $$c^{c^{\rdots^{\norm{\mathcal{T}}}}}$$ whose height would be linear in $\norm{\mathcal{T}}$.
Using \ths{}s, we can instead bound the size of the collection by $c^{\norm{\mathcal{T}}^2}$ (see Corollary~\ref{corr:stackedfundbound} for a precise statement).
This result is the technical heart of the paper.
This work is based on ideas used by King~\cite[\S3.2-3]{KingPolytopality} and Lackenby~\cite[\S12.2]{LackenbyCertificationKnottedness}, among others, which have not previously been rigorously generalised.
We delay some of the technical proofs in this theory to Appendix~\ref{appendix:normalsurfacesinths}, as they largely follow ideas from standard normal surface theory as described by Matveev~\cite[\S4]{Matveev}.

After developing the theory of \ths{}s, we apply it in Section~\ref{section:fundamentalannulicollection} to show that there is a collection of normal annuli $M$, of total weight at most $c^{\norm{\mathcal{T}}^2}$, that cut $M$ into a collection of solid tori.

The bulk of the work in proving Theorems~\ref{thm:SFSrecognitionNP} and~\ref{thm:SFSnamingNP} is to establish that recognition of circle bundles over surfaces with boundary is in \NP{}, which we do in Section~\ref{section:surfacebundlerecognition} using the results from Sections~\ref{section:fundamentalhorizontalsurface} and~\ref{section:fundamentalannulicollection}.
This is as, by previous work of the author~\cite{JacksonTriangulationComplexity}, the general case can be reduced to this problem: all singular fibers (circle fibers over the cone points of the base orbifold) other than those of multiplicity two can be made simplicial in the 82\textsuperscript{nd} barycentric subdivision of any triangulation $\mathcal{T}$ of $M$, so we can drill them out and record the slope of a meridian of each singular fiber.
In the first part of Section~\ref{section:SFSrecognition} we extend from circle bundles over surfaces to the case when $M$ has singular fibers of multiplicity two, and then can use the barycentric subdivision approach to prove the general case.

The author would like to thank Saul Schleimer for pointing out the connection with torus knot recognition.
The author would also like to thank the referee for their thorough and helpful comments.

\section{Background and conventions}
\label{section:background}
All 3-manifolds in this paper are compact and orientable.

\subsection{Computational complexity}
We give a quick introduction to some computational complexity classes; for a thorough introduction to the topic, see~\cite{AroraBarak}.

A \emph{problem} is a function from the set of finite binary strings to itself; from its \emph{input} to its \emph{output}.
The \emph{size} of an input string $T$, $|T|$, is the number of bits in the string.
A \emph{decision problem} is a problem whose range is the set $\{\text{``yes''}, \text{``no''}\}$.
An \emph{algorithm} that \emph{solves} a problem $P$ is a Turing machine that, given an input to the problem, computes the corresponding output.
We say a problem is \emph{decidable} if there exists an algorithm to solve it.
There exist undecidable decision problems, such as the \emph{halting problem} (famously proven by Turing) and, given a group presentation, deciding whether it is a presentation of the trivial group (by work of Adian-Rabin).

A \emph{complexity class} is (roughly speaking) a set of problems with solutions that satisfy certain restrictions.
We will be interested in three complexity classes: \P{}, \NP{} and \coNP{}.
An algorithm runs in \emph{polynomial time} if there is a polynomial $p$ such that if $T$ is an input, the running time of the algorithm is at most $p(|T|)$.

\begin{defn}
\label{defn:P}
A decision problem lies in \P{} if it has a polynomial time solution.
\end{defn}

We might think of this as the class of problems that are ``easy to evaluate''.
One example (by computing homology) is 2-manifold homeomorphism.

Informally, a decision problem $D$ lies in \NP{} if, given an input $T$ such that the output $D(T)$ is ``yes'', there is a proof of this output (a \emph{certificate}) that can be verified in polynomial time in $|T|$.
We might think of this as the class of problems that are ``easy to verify''.
Note that this definition is asymmetric -- while, if a problem is in \NP{}, we can certify a ``yes'', we may or may not be able to quickly certify a ``no''.

\begin{defn}
\label{defn:NP}
    A decision problem $D$ lies in \NP{} if there exists a polynomial time Turing machine $V$ and a polynomial $p$ such that for each input $T$, $D(T)$ is ``yes'' if and only if there exists some $c$, whose size is bounded by $p(|T|)$, such that $V(T, c) = \text{``yes''}$.
    We say that $c$ is a \emph{certificate} for $T$.

    A decision problem $D$ lies in \coNP{} if its complement problem, $\bar{D}$, which is defined by $\bar{D}(T) = \neg D(T)$, lies in \NP{}.
\end{defn}

\begin{remark}
    If a problem $D$ lies in $\P{}$, then $D$ also lies in \NP{} and \coNP{}, as the polynomial time solution to $D$ is sufficient to certify both a ``yes'' and a ``no'' output.
\end{remark}

\subsection{Triangulations and normal surface theory}
\label{section:normalsurface}

We will use the definition of \emph{triangulation} that is conventional in low-dimensional topology (and sometimes called a pseudo-triangulation): a triangulation of a 3-manifold $M$ is a collection of tetrahedra, with affine gluing maps between their faces, such that if we execute these gluing maps, the interior of the result is homeomorphic to $M$.

A \emph{normal curve} in a triangulated surface is a curve whose intersection with each triangle is a collection of arcs that run between {distinct} edges of the triangle.
Normal surfaces, which are a higher-dimensional analogue of normal curves, were developed by Haken in the 1960s to give a combinatorial representation of interesting surfaces in a triangulated 3-manifold~\cite{HakenNormalSurfaces}.
For a rigorous exposition of this theory, see \S3 and \S4 of~\cite{Matveev}.

Let $\mathcal{T}$ be a triangulation of a 3-manifold $M$.
A surface $F$ in $M$ is \emph{normal} with respect to $\mathcal{T}$ if it intersects each tetrahedron in a collection of discs, each of whose boundary curves intersects each edge of the tetrahedron at most once.
We call these \emph{elementary discs}.
Each of these discs must be a triangle or quadrilateral.
There are seven possible disc types in each tetrahedron, three of which are shown in Figure~\ref{fig:normal}.

\begin{figure}[th]
  \centering
  \resizebox{0.4\textwidth}{!}{\includegraphics{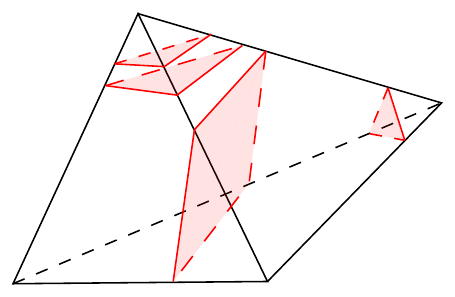}}
  \caption{An example of the intersection of a normal surface with a tetrahedron.}
  \label{fig:normal}
\end{figure}

\begin{prop}[Proposition 3.3.24~\cite{Matveev}]
\label{prop:normalrepsofessential}
    Let $S$ be an incompressible, $\del$-incompressible properly-embedded surface in an irreducible and $\del$-irreducible 3-manifold with a triangulation $\mathcal{T}$, such that $S$ is not a 2-sphere or a disc.
    There is a normal surface that is isotopic to $S$.
\end{prop}

\begin{defn}
    \label{defn:normalsurfaceweight}
    The \emph{size} of a normal surface $F$, $s(F)$, is the number of elementary discs in it.
    The \emph{edge weight} of a normal surface $F$, $w(F)$, is $|F\cap \mathcal{T}^1|$.
    A normal surface is \emph{minimal} if it is of minimal edge weight in its isotopy class in the 3-manifold.
\end{defn}

There is a natural identification of a normal surface $F$ with a vector $v_F$ in $\ZZ_{\geq 0}^{7\norm{\mathcal{T}}}$ by, for each tetrahedron $T$, writing down the count of each of the seven types of elementary disc in $F\cap T$.
If $F$ and $G$ are normal surfaces that are not normally isotopic (that is, isotopic fixing the 1-skeleton of $\mathcal{T}$), then $v_F$ and $v_G$ will be distinct.
We can add two such vectors $v_F$ and $v_G$ to obtain a vector $v_{F+G}$, which itself may or may not correspond to a normal surface that (when it exists) we call $F+G$.
Normal surface addition is additive on size, edge weight, Euler characteristic, and $\ZZ_2$-homology.

\begin{defn}
A normal surface $F$ is \emph{fundamental} if there is no way of writing it as $F = G_1 + G_2$ where $G_1$ and $G_2$ are non-empty normal surfaces.
\label{defn:fundamental}
\end{defn}

If $F$ is fundamental, we can use linear programming techniques to bound its size $s(F)$.

\begin{lemma}[Lemma 6.1~\cite{HassLagariasPippenger}]
   Let $F$ be a fundamental normal surface in a 3-manifold triangulation $\mathcal{T}$.
   The size of $F$ is at most $\norm{\mathcal{T}}^2 2^{7\norm{\mathcal{T}}+2}$.
   \label{lemma:triangulationfundamentalsurfacebound}
\end{lemma}

We will induct on weight to show that there are fundamental representatives of certain desirable surfaces (for example, in Proposition~\ref{prop:fundamentalhorizontalsurface}).
To do this, we need some results restricting summands of minimal essential normal surfaces.

\begin{theorem}[Theorem 6.5~\cite{JacoTollefson}]
\label{thm:jacotollefson}
    Let $F$ be an orientable, incompressible and $\del$-incompressible connected minimal normal surface in an orientable, irreducible and $\del$-irreducible manifold $M$ with a triangulation $\mathcal{T}$.
    Suppose that $nF = G_1 + G_2$ for some $n$.
    Then $G_1$ and $G_2$ are incompressible and $\del$-incompressible, and neither has any components of positive Euler characteristic.
\end{theorem}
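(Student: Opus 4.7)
The plan is to argue by contradiction via the regular exchange description of the Haken sum. First I set up notation: performing the Haken sum $nF = G_1 + G_2$ with a regular exchange at each double curve equips the parallel surface $nF$ with a collection of curves $\Gamma$ which cut it into patches, each lying in either $G_1$ or $G_2$; reassembling along $\Gamma$ via the opposite exchange recovers $nF$. The weight (intersection with the $1$-skeleton) is additive, $n\cdot w(F) = w(G_1) + w(G_2)$, and is invariant under normal isotopy.

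To rule out positive Euler characteristic components, I would suppose some component $C$ of $G_1$ has $\chi(C) > 0$, so $C$ is a $2$-sphere or a disk meeting $\del M$. By irreducibility (respectively $\del$-irreducibility) of $M$, $C$ bounds a ball (respectively is $\del$-parallel). Using the patch decomposition of $C$ induced by $\Gamma$, an innermost-patch argument inside $C$ produces a patch $P\subset C$ that is a disk whose boundary is a single curve $\gamma\in\Gamma$. Reversing the exchange at $\gamma$ yields a new normal sum that peels $P$ off of $G_1$; iterating, I remove $C$ entirely in favour of pieces of the parallel copies, and the ball (or $\del$-parallel region) bounded by $C$ then provides an isotopy of one copy of $F$ strictly reducing $w(F)$, contradicting minimality.

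Next I would address (boundary-)incompressibility. Suppose $G_1$ admits a compressing disk $D$ with $\del D$ essential in $G_1$. After isotoping $D$ to be transverse to the parallel copies of $F$, pick an innermost disk $D'\subset D$ in $D\cap F$, so $\del D'$ lies on a single copy of $F$ and the interior of $D'$ is disjoint from $F$. Incompressibility of $F$ forces $\del D'$ to bound a disk on $F$; cutting and pasting across this disk and reversing the appropriate regular exchanges either reduces $|D\cap F|$ (after which I recurse) or produces a normal isotopy of $F$ reducing $w(F)$, again contradicting minimality. This forces $\del D$ to have bounded a disk on $G_1$ already, contradicting essentiality. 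The $\del$-incompressibility case proceeds along the same template with $\del$-compressing disks, using $\del$-irreducibility of $M$ and $\del$-incompressibility of $F$.

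The hard part is making the patch-disk surgeries rigorous: I must verify that reversing regular exchanges along innermost curves of $\Gamma$ preserves normality of the sum, that the accumulated moves genuinely decrease $w(F)$ rather than merely redistribute intersections between the summands, and that boundary arcs and normal discs meeting $\del M$ are handled correctly in the $\del$-compression case. These manipulations are standard in Haken-style normal surface theory but the combinatorial bookkeeping of patches, curves of $\Gamma$, and exchanged summands is where the bulk of the technical content lies.
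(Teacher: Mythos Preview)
The paper does not give its own proof of this theorem: it is quoted as Theorem~6.5 of Jaco--Tollefson, with only the added remark that the exclusion of positive Euler characteristic components follows from their Lemma~6.6 (no disc patches in $G_1\cup G_2$). The closest in-paper argument is the proof of the analogous Proposition~\ref{prop:incompressiblenormalsurface} in the appendix, which follows Matveev, and it is against that that your outline should be measured.

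Your incompressibility step has a genuine gap. You propose to intersect a compressing disc $D$ for $G_1$ with the parallel copies of $F$ and run an innermost-disc recursion using incompressibility of $F$. But once the interior of $D$ has been cleaned of intersections with $nF$, you are left with $\partial D\subset G_1$ bounding a disc $D''$ in some copy of $F$; that disc $D''$ may well pass through patches of $G_2$, so it does not lie in $G_1$ and gives no contradiction. The argument actually used (by Jaco--Tollefson, Matveev, and in the paper's appendix) is different: one cuts $D$ along $D\cap G_2$ and labels each corner of each resulting region as a \emph{good} or \emph{bad} angle according to whether the two adjacent patches are glued in $F$ at that double curve. A pigeonhole count---one more region than interior arcs, each arc contributing at most two bad angles---forces some region $\Delta$ to have at most one bad angle; exactly one bad angle yields an explicit isotopy of $F$ to lower weight, and zero bad angles makes $\Delta$ a genuine ($\partial$-)compressing disc for $F$. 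This good/bad angle mechanism is the key idea your sketch is missing; what you flag at the end as ``standard bookkeeping'' is in fact where the proof lives.

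Your disc-patch step is also not right as stated. Reversing the exchange at a single curve $\gamma$ does not ``peel $P$ off'' of $G_1$; it produces a different resolution of the same $2$-complex $G_1\cup G_2$, and one must argue case by case---is the companion trace curve one-sided, does it bound a disc in $F$, on which side of $P$---that some irregular switch yields a surface admissibly isotopic to $F$ of strictly smaller weight (this is the content of the lemma preceding Proposition~\ref{prop:incompressiblenormalsurface}, or equivalently Jaco--Tollefson's Lemma~6.6). The ball bounded by a sphere component $C$ of $G_1$ does not by itself furnish an isotopy of $F$, since $C$ is built from patches of both summands and is not a component of $nF$.
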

Jaco and Tollefson's version of this result does not mention the sphere or $\RR P^2$ exclusion.
However, in Lemma 6.6 of~\cite{JacoTollefson} they show that under the same assumptions as in Theorem~\ref{thm:jacotollefson}, $G_1 \cup G_2$ contains no disc patches, so neither $G_1$ nor $G_2$ can be a sphere or $\RR P^2$.

In Matveev's book, he gives a variant of this result that does not require $F$ to be orientable.
\begin{theorem}[Theorem 4.1.36~\cite{Matveev}]
\label{thm:matveev}
   Let $F$ be an incompressible, $\del$-incompressible, minimal connected normal surface $F$ in an orientable, irreducible, $\del$-irreducible manifold $M$ with triangulation $\mathcal{T}$, such that $F = G_1 + G_2$.
   Then $G_1$ and $G_2$ are incompressible and $\del$-incompressible, and have no components of positive Euler characteristic. 
\end{theorem}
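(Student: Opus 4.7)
The plan is to reduce the nonorientable case to Theorem~\ref{thm:jacotollefson} by a doubling argument.  Since $M$ is orientable and $F$ is nonorientable, $F$ is one-sided, so the frontier $\tilde{F} := \del N(F)$ of a regular neighbourhood is the orientation double cover of $F$, which is a connected orientable surface.  As a normal surface $\tilde{F}$ is normally isotopic to the geometric double $2F$, so in normal coordinates $\tilde{F} = 2F = 2G_1 + 2G_2$, giving a valid Haken sum expression for $\tilde{F}$.

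First I would verify that $\tilde{F}$ satisfies the hypotheses of Theorem~\ref{thm:jacotollefson}.  Orientability and connectedness are immediate.  Incompressibility of $\tilde{F}$ follows from incompressibility of $F$: a compressing disc $D$ for $\tilde{F}$ either lies outside $N(F)$, in which case an innermost-disc argument pushes it through $N(F)$ to a compressing disc for $F$, or meets $N(F)$ in a way controllable via the $I$-bundle structure.  An analogous argument gives $\del$-incompressibility.  For minimality, I would replace $F$ with the minimal normal representative of its isotopy class and show that $\tilde{F}$ is then also minimal, using that any weight-reducing isotopy of $\tilde{F}$ projects to one of $F$ in $N(F)$.

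Next I would apply Theorem~\ref{thm:jacotollefson} to the decomposition $\tilde{F} = (2G_1) + (2G_2)$, concluding that $2G_1$ and $2G_2$ are incompressible, $\del$-incompressible, and have no components of positive Euler characteristic.  To descend these properties to $G_1$ and $G_2$: a component of $G_i$ of positive Euler characteristic (sphere, disc, $\RR P^2$) produces, after doubling, either two parallel such components or a connected double cover that still has positive Euler characteristic, contradicting the conclusion for $2G_i$.  A compression disc $D$ for $G_i$ yields a compression disc for $2G_i$ by pushing $D$ off $G_i$ to one sheet of $\del N(G_i)$ (using orientability of $M$ to make this push well-defined near $\del D$), and the same works for $\del$-compression discs.

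The main obstacle I anticipate is the minimality step: the doubling operation does not obviously preserve being weight-minimal within the isotopy class, since $\tilde{F}$'s isotopies are richer than pairs of isotopies of $F$.  If a direct argument is elusive, the fallback is to redo the Jaco--Tollefson proof directly for $F$, pinpointing where orientability is used -- principally in the patch exchange step, where one checks that exchanges along curves of $G_1 \cap G_2$ preserve the surface class -- and replacing those steps by arguments in $\tilde{M} \to M$, the orientation double cover of a neighbourhood of $F$, where the lift of $F$ is orientable and the classical arguments apply.
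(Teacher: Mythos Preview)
The paper does not give its own proof of this theorem; it is cited from Matveev.  The closest the paper comes is Proposition~\ref{prop:incompressiblenormalsurface} in the appendix, which proves the analogous statement for split handle structures by following Matveev's direct argument: show the patches of $G_1\cup G_2$ are incompressible and $\del$-incompressible with no disc patches, then deduce the same for $G_1$ and $G_2$ by a good/bad angle count on a putative compressing disc.  No doubling is involved, and orientability of $F$ never enters.

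Your primary plan is genuinely different and, as you correctly anticipate, breaks at the minimality step.  The claim that ``any weight-reducing isotopy of $\tilde{F}$ projects to one of $F$ in $N(F)$'' is not justified and is likely false: an ambient isotopy of $\tilde F$ in $M$ has no reason to respect the $I$-bundle structure of $N(F)$, and once $\tilde F$ leaves the neighbourhood there is nothing to project.  Concretely, the minimal normal representative of the isotopy class of $\tilde F = 2F$ could be some other orientable surface of strictly smaller weight even while $F$ itself is minimal, and nothing in your outline rules this out.  Without minimality of $\tilde F$ you cannot invoke Theorem~\ref{thm:jacotollefson}, so Plan~A does not close.

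Your fallback is the right move and is exactly what Matveev does; the point is that the Jaco--Tollefson argument, when phrased via patches and regular/irregular switches as in the appendix here, never actually uses orientability of $F$.  The places you might expect trouble --- the one-sidedness of an $\RR P^2$ patch, or the exchange of adjacent companion discs --- are handled directly (see the $\RR P^3$ case in the proof of the lemma preceding Proposition~\ref{prop:incompressiblenormalsurface}), so there is no need to pass to a double cover at all.
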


In Section~\ref{section:ths} we will define a variant of normal surface theory in the setting of \emph{\ths{}s}.
Much of the structure there will be parallel to standard normal surface theory.
%For example, Proposition~\ref{prop:incompressiblenormalsurface} is analogous to Theorem~\ref{thm:matveev}, and Lemma~\ref{lemma:fundamentalsurfacebound} parallels Lemma~\ref{lemma:triangulationfundamentalsurfacebound}.
For example, the analogous statements there to Proposition~\ref{prop:normalrepsofessential}, Lemma~\ref{lemma:triangulationfundamentalsurfacebound} and Theorem~\ref{thm:matveev} are respectively Proposition~\ref{prop:cannormaliseincompressiblesurfaces}, Lemma~\ref{lemma:fundamentalsurfacebound} and Proposition~\ref{prop:incompressiblenormalsurface}.

\subsection{Seifert fibered spaces}
\label{section:sfs}

A Seifert fibered space is an orientable 3-manifold that admits a fibration by circles whose base space is an orbifold (which necessarily has isolated cone points), or, equivalently (by work of Epstein), admits a foliation by circles.
Such a manifold can have boundary, in which case its boundary will be a union of fibres and hence will be a collection of tori.
Six of the eight Thurston geometries are Seifert fibered.

We can describe a Seifert fibered manifold $M$ by its \emph{Seifert data} $[\Sigma, q_1/p_1,\ldots,q_n/p_n]$ where $\Sigma$ is the underlying surface of the orbifold, the $n$ cone points of the orbifold have angle $2\pi/p_i$ (for $p_i\in\ZZ_{\geq 2}$), and the integer $q_i$ (for each $i$) determines the local fibration over each cone point.
When the manifold is closed, we also need to give its Euler number, $e\in\ZZ$, but as we will deal only with the non-empty boundary case, this will not be a factor in the work in this paper.
The \emph{singular fibers} of $M$ are the circle fibers over the cone points of the orbifold.
For a more thorough description of Seifert fibered spaces and their properties, see Ch.\ 10 of~\cite{Martelli}.

When do two sets of Seifert data correspond to homeomorphic 3-manifolds?
First, we give the standard criterion for when two sets of Seifert data correspond to the same Seifert fibration.
A proof is given in Proposition 10.3.13 of~\cite{Martelli}.

\begin{prop}
\label{prop:seifertdataisomorphism}
The Seifert fibrations associated to two sets of Seifert data $[\Sigma_1, q_1/p_1,\ldots, q_m/p_m]$ and $[\Sigma_2, r_1/s_1,\ldots,r_n/s_n]$ are isomorphic (preserving orientation) when $\Sigma_i$ has non-empty boundary if and only if $\Sigma_1$ and $\Sigma_2$ are homeomorphic, and after discarding all fractions $q_i/p_i$ and $r_i/s_i$ with $p_i$ or $s_i$ equal to 1, we have that $m=n$ and (up to reordering) $p_i = s_i$ and $q_i \equiv r_i\pmod{p_i}$. 
If the $\Sigma_i$ are closed, we additionally need to check that the Euler numbers $\sum_{i=1}^m q_i/p_i$ and $\sum_{j=1}^m r_i/s_i$ are equal, where we do not discard fractions with denominator one.
\end{prop}

Second, there are a few 3-manifolds that have more than one Seifert fibration, as shown by Waldhausen~\cite[Theorem 10.1]{Waldhausen2}.
For a proof in English see Theorem 10.4.19 of~\cite{Martelli}.

\begin{theorem}
\label{thm:nonisomorphicSFS}
    Seifert fibered spaces admit unique Seifert fibrations, aside from:
   \begin{enumerate}
        \item the solid torus fibres as $[D^2]$ and $[D^2, p/q]$;
       \item $[D^2, 1/2, -1/2]$ is also the circle bundle over the M\"obius band;
       \item $[S^2, 1/2, -1/2, q/p] \cong [\RR P^2, p/q]$;
       \item $[S^2, 1/2, 1/2, -1/2, -1/2] \cong [K]$, the twisted product of the Klein bottle with a circle;
        \item lens spaces (including $S^1\times S^2$) fibre in many ways.
   \end{enumerate}
\end{theorem}

We now give proofs of a few lemmas that we will use later in the paper.

\begin{lemma}
\label{lemma:SFShomeopolytime}
    Given two sets of Seifert data, where we are guaranteed that the associated manifolds $M_1$ and $M_2$ are not lens spaces, there is a polynomial time algorithm to decide if $M_1$ and $M_2$ are homeomorphic.
\end{lemma}

\begin{proof}
The algorithm is as follows.
Replacing two singular fibres $q_1/p_1$ and $q_2/p_2$ with $(q_1+p_1)/p_1$ and $(q_2-p_2)/p_2$ or, if $M$ has non-empty boundary, replacing $q/p$ with $(q+p)/p$, does not change the Seifert fibration up to isomorphism.
We can also remove trivial singular fibres (those with coefficient $0/1$).
Use these operations and their inverses to ensure that the multiplicity $p$ is at least 2 for all singular fibres in $M_1$ and $M_2$.
We can use the criteria in Proposition~\ref{prop:seifertdataisomorphism} to check (in polynomial time) if two sets of Seifert data with multiplicities at least 2 give orientation-preservingly isomorphic Seifert fibrations.
To see if they are isomorphic disregarding orientation, we also compare the Seifert data for $M_1$ with that for $M_2$ with the signs of all of the singular fibre fractions reversed.
If $M_1$ and $M_2$ have isomorphic Seifert fibrations, return ``yes''.
Otherwise, we need to check if we are in one of the cases in Theorem~\ref{thm:nonisomorphicSFS}.

If one Seifert fibration is $[D^2]$ or $[D^2, p/q]$, it is enough to check that the other is too.
Similarly, if one manifold is a circle bundle over a M\"obius band (case 2 of Theorem~\ref{thm:nonisomorphicSFS}), check if the other is isomorphic to $[D^2, 1/2, -1/2]$.
Cases 3 and 4 of Theorem~\ref{thm:nonisomorphicSFS} are analogous.

If none of these steps have concluded that the two manifolds are homeomorphic, then, as they are not lens spaces, they are not.
\end{proof}

Orientable incompressible and $\del$-incompressible surfaces in Seifert fibered spaces have been classified: they are either \emph{horizontal} (transverse to the fibration) or \emph{vertical} (a union of regular fibers) \cite[Thm.\ 2.8]{Waldhausen1}.
We will need to also classify the nonorientable ones; fortunately, while lesser known, these are also quite well-understood.
We first need to describe incompressible surfaces in solid tori.

\begin{prop}[\cite{Przytycki,Rannard}]
\label{prop:incompressibleinsolidtori}
The incompressible, non-$\del$-parallel, non-$S^2$ surfaces up to isotopy in a solid torus are classified by their intersections with the boundary.
The intersection of one of these surfaces with the boundary is a single curve of slope $\frac{p}{q}$ where $q$ is even (that is, the slopes intersect a meridian curve an even number of times) and all these slopes can be achieved.
Any such surface with non-zero genus is non-orientable and $\del$-compressible.
Incompressible $\del$-parallel surfaces are the annuli which are unions of fibers in fibrations of the solid torus as an $S^1$-bundle, as well as $\del$-parallel discs.
\end{prop}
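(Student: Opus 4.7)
The plan is to split on whether $F$ has boundary, reducing the bounded case to an analysis inside the 3-ball obtained by cutting the solid torus $V = D^2 \times S^1$ along a meridian disc.

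For the closed case, $\pi_1(F) \inj \pi_1(V) = \ZZ$ forces $F$ to be a sphere, torus, or Klein bottle. Spheres bound balls by irreducibility of $V$ and so are compressible; a closed incompressible torus in a solid torus is $\del$-parallel by the standard argument that it cobounds a solid torus with $\del V$; a closed Klein bottle is ruled out by examining its orientable double cover inside $V$. The $\del$-parallel annuli described in the final sentence of the proposition are handled separately by observing that any annulus $A$ cobounding a region with $\del V$ is isotopic to an annulus vertical in some $S^1$-bundle structure on $V$.

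For the bounded case, every component of $\del F$ is essential on $T^2$; disjoint essential simple closed curves on $T^2$ are mutually isotopic, so they all share a slope $p/q$. If the slope is meridional, $\del F$ bounds meridian discs and standard innermost-disc surgery using irreducibility of $V$ forces $F$ to be a meridian disc. Otherwise, choose a meridian disc $D$ of $V$ transverse to $F$ with $|F \cap D|$ minimal: incompressibility and irreducibility eliminate closed components of $F \cap D$, while minimality forbids $\del$-parallel arcs in $D$, so $F \cap D$ is a family of parallel essential arcs. Cutting $V$ along $D$ produces a 3-ball $B$ in which $F \cap B$ is a collection of properly embedded incompressible discs whose boundary pattern on $\del B$ is pinned down by the slope $p/q$; re-gluing the two copies of $D$ reconstructs $F$ from this disc pattern, so both existence and uniqueness up to isotopy reduce to checking when the re-gluing produces a connected surface.

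The parity condition $q \equiv 0 \pmod 2$ is best read off from the long exact sequence of $(V, \del V)$ with $\ZZ/2$ coefficients: the boundary map $H_2(V, \del V; \ZZ/2) \to H_1(\del V; \ZZ/2)$ has image equal to the kernel of $H_1(\del V; \ZZ/2) \to H_1(V; \ZZ/2) = \ZZ/2$, which consists exactly of the classes of slopes meeting the meridian an even number of times. This obstructs odd-$q$ incompressible surfaces and, conversely, guarantees that each even-$q$ combinatorial reconstruction in $B$ glues to a genuine connected, non-orientable surface. Non-orientability for $q \neq 0$ follows from the same sequence with $\ZZ$ coefficients, where the image is trivial and so an orientable $F$ must have $q = 0$; a $\del$-compressing disc is then furnished by any disc component of $F \cap B$ together with its complementary arc on $\del V$. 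The main obstacle is the combinatorial bookkeeping of the re-gluing, and the homological count above is what makes both the parity obstruction and the non-orientability statement transparent while certifying that the $q/2$ arcs in $D$ assemble correctly into a single surface rather than several parallel copies.
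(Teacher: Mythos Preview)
The paper does not prove this proposition: it is quoted from the cited references of Przytycki and Rannard without argument and then used as input to Lemma~\ref{lemma:nonorientablesurfacesinSFS}. There is therefore no proof in the paper to compare your attempt against.

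Your sketch follows the standard route to this classification---cut along a meridian disc, analyse the resulting disc pattern in the 3-ball, and reglue---which is essentially the approach taken in the cited sources. A few points would need tightening before it stood on its own. First, spheres are not ``compressible'' because they bound balls; they are simply excluded by hypothesis. Second, you never argue that $\del F$ is a \emph{single} curve, which is part of the asserted classification; this does fall out of the disc-in-ball picture once connectedness is tracked, but it needs to be said. Third, the $\ZZ/2$-homology argument cleanly gives the necessary parity condition on $q$ for a surface with a single boundary curve, and the $\ZZ$-coefficient version correctly rules out orientability when $q\neq 0$, but neither yields existence or uniqueness---those are genuinely the combinatorial reconstruction in $B$, and uniqueness up to isotopy is the non-trivial content of the cited papers rather than something the homology delivers. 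Finally, your description of the $\del$-compressing disc is garbled: a ``disc component of $F\cap B$'' lies in $F$ and so cannot compress $F$. The disc you want is an outermost bigon in the meridian disc $D$ cut off by an arc of $F\cap D$, whose boundary is one arc in $F$ and one arc in $\del V$; one then checks this arc is essential in $F$ whenever $F$ has non-zero genus.
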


\begin{prop}[\cite{Przytycki, Rubinstein}]
\label{prop:incompressibleinthickenedtorus}
An incompressible surface in $T^2\times I$ is isotopic to one of the following:
\begin{enumerate}
    \item a trivial sphere or disc;
    \item an annulus $\gamma\times I$;
    \item a $\del$-parallel annulus or torus;
    \item a nonorientable surface $F$, which is $\del$-compressible and uniquely determined by two different slopes $\frac{p_0}{q_0} = F\cap (T^2\times {0})$ and $\frac{p_1}{q_1} = F\cap (T^2\times {1})$ where the curves representing these slopes intersect an even number of times.
\end{enumerate}
In the last case, $F$ has non-orientable genus equal to the length of the minimal sequence of curves in the torus $(\gamma_1, \ldots,\gamma_n)$ from $\frac{p_0}{q_0}$ to $\frac{p_1}{q_1}$ where $\gamma_i$ and $\gamma_{i+1}$ intersect twice.
\end{prop}

\begin{defn}
\label{defn:pseudovertical}
Let $M$ be a Seifert fibered space and let $T$ be a collection of solid torus neighbourhoods of each singular fiber such that $T$ is a union of fibers.
A surface in a Seifert fibered space is \emph{pseudo-vertical} if it is isotopic to a surface that is a union of fibers in $M-T$ and is incompressible in each solid torus component of $T$.
\end{defn}

A vertical surface (that is, a union of regular fibers of $M$) is also pseudo-vertical as we can isotope it to be disjoint from $T$.

\begin{lemma}
\label{lemma:nonorientablesurfacesinSFS}
Let $M$ be an irreducible Seifert fibered space with non-empty boundary, and with a (possibly empty) graph $\Gamma$ in $\del M$ consisting of a collection of vertical fibers.
If $S$ is an incompressible, $\del$-incompressible surface in $M$ disjoint from $\Gamma$, then $S$ is isotopic to a horizontal or pseudo-vertical surface, or is a $\del$-parallel disc or a trivial sphere.
\end{lemma}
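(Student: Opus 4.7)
The plan is to reduce to the case of a circle bundle (a Seifert fibration with no singular fibres) by drilling out small fibred solid torus neighbourhoods of the singular fibres, apply the classical classification of incompressible surfaces in such bundles to the outside piece, and then reassemble. Choose disjoint fibred solid torus neighbourhoods $T$, one around each singular fibre, chosen to be disjoint from the boundary pattern $\Gamma$, and set $M_0 := M \setminus \operatorname{int}(T)$, which is a circle bundle over the base orbifold with open discs removed around each cone point.

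First I would isotope $S$ to be transverse to $\partial T$ while minimising $|S \cap \partial T|$. Since each component of $\partial T$ is an incompressible torus in $M$ (as $T$ is a regular neighbourhood of a singular fibre in an irreducible Seifert fibered space), standard innermost-disc and outermost-arc arguments applied to $S \cap \partial T$, using incompressibility and $\partial$-incompressibility of $S$, force every intersection curve to be essential on its torus component, hence all curves on a given torus are mutually parallel. The same minimisation also rules out any $\partial$-compression of $S \cap M_0$ whose arc lies on a new torus boundary: any such compression could be applied to $S$ to strictly reduce $|S \cap \partial T|$.

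Next I would analyse the two kinds of pieces. Each component of $S \cap T$ is an incompressible surface in a solid torus, so by Proposition~\ref{prop:incompressibleinsolidtori} it is either a meridian disc, a $\partial$-parallel annulus that is a union of fibres in the solid-torus fibration, or a nonorientable surface whose boundary has even intersection with the meridian; spheres and other disc components are excluded by irreducibility and minimality. The piece $S \cap M_0$ is then an incompressible and $\partial$-incompressible surface in the circle bundle $M_0$ with respect to its full boundary (by the arc-free conclusion of the minimisation step). The classical classification of essential surfaces in a circle bundle over a surface with boundary gives that $S \cap M_0$ is isotopic, rel $\partial M_0$, to either a horizontal or a vertical surface.

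Finally I would reassemble. If $S \cap M_0$ is vertical, then $S \cap \partial T$ consists of fibres, so each component of $S \cap T$ is either a fibred $\partial$-parallel annulus or a nonorientable surface with fibre boundary and is incompressible in its solid torus, whence $S$ is pseudo-vertical in the sense of Definition~\ref{defn:pseudovertical}. If $S \cap M_0$ is horizontal, then the curves $S \cap \partial T$ are transverse to fibres, so matching slopes forces each component of $S \cap T$ to be a meridian disc (centred on the singular fibre) or an allowable nonorientable surface, giving a horizontal $S$. The listed exceptions, a $\partial$-parallel disc or trivial sphere, arise only in the degenerate cases where $S$ is swallowed entirely by $M_0$ or by $T$ and one of the pieces of Proposition~\ref{prop:incompressibleinsolidtori} is realised. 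The main obstacle is the bookkeeping at $\partial T$: we must verify that $\partial$-incompressibility of $S$ in $(M,\Gamma)$ genuinely descends to $\partial$-incompressibility of $S \cap M_0$ with respect to the newly-created torus components of $\partial M_0$, which is exactly what the careful choice of minimal intersection buys us, and that the horizontal/vertical dichotomy on $M_0$ is compatible with the slope classification on $T$.
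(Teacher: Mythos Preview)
Your approach is genuinely different from the paper's. The paper cuts $M$ along a complete system of \emph{vertical annuli} $A$ (some separating off singular-fibre neighbourhoods, the rest cutting the complement to a single solid torus), minimises $|S\cap A|$, and argues directly that $S\cap A$ consists either entirely of vertical fibres (leading to the pseudo-vertical case) or entirely of spanning arcs (leading, after a further $\partial$-incompressibility argument, to meridian discs in each solid torus and hence a horizontal surface). You instead cut only along the \emph{tori} $\partial T$ and invoke the already-known classification for the circle bundle $M_0$ (essentially Mijatovi\'{c}'s Proposition~2.6). Your route is shorter if one is happy to quote that result; the paper's is self-contained and, by working with annuli, gives finer control over how the horizontal and vertical pieces sit relative to the singular fibres.

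There is, however, a real gap in your horizontal reassembly. You write that in the horizontal case ``matching slopes forces each component of $S\cap T$ to be a meridian disc \emph{or an allowable nonorientable surface}, giving a horizontal $S$''. But a nonorientable incompressible piece in a solid torus is never horizontal with respect to the singular fibration of $T$; gluing such a piece to a horizontal $S\cap M_0$ produces a pseudo-horizontal surface, which (as the paper notes, citing Frohman) occurs only when $M$ is closed. You must rule these pieces out. The mechanism is that any such surface is $\partial$-compressible in $T$ (Proposition~\ref{prop:incompressibleinsolidtori}); since $S\cap M_0$ is horizontal, its complement in $M_0$ is an $I$-bundle and the $\partial$-compression arc on $\partial T$ can be slid along fibres to $\partial M$, yielding a genuine $\partial$-compression of $S$ in $(M,\Gamma)$, which is forbidden. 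Without this step your conclusion in the horizontal case does not follow. Two smaller points: you should treat the solid torus separately (there $\partial T$ is compressible, so your opening claim fails), and you should say why $\partial$-incompressibility of $S\cap M_0$ holds along the \emph{original} boundary components of $M$, not just along $\partial T$---this needs the observation that no curve of $S\cap\partial T$ bounds a disc in $S$, which follows from incompressibility of $\partial T$ and minimality.
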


This result is a modification of the standard proof in the case when $S$ is orientable (see~\cite[Proposition 1.11]{Hatcher3Mflds} or~\cite[Proposition 10.4.9]{Martelli}) and Mijatovi\'{c}'s result in the case where there are no singular fibers~\cite[Proposition 2.6]{MijatovicTriangulationsSFS}).
If $M$ is closed then $S$ may additionally be \emph{pseudo-horizontal}; this case is discussed in~\cite[Theorem 2.5]{Frohman}.

\begin{proof}
Suppose that $S$ is not a $\del$-parallel disc or a trivial sphere.
If $M$ is the solid torus, the result follows from Proposition~\ref{prop:incompressibleinsolidtori}, noting that $S$ may be $\del$-parallel if it is parallel to an annulus in the boundary containing curves of $\Gamma$.
Otherwise $M$ is $\del$-irreducible.
Take a collection of disjoint vertical annuli $A$ disjoint from $\Gamma$, consisting of $n$ that separate a neighbourhood of each singular fiber from the rest of $M$, where these neighbourhoods are themselves disjoint from $\Gamma$, and then some further annuli that cut the rest of $M$ into a solid torus.
Isotope $S$ such that $S$ is transverse to $A$ and such that $|S\cap A|$ is minimised.
Consider $S\cap A$, which consists of arcs and closed curves.
Note that $S\cap A$ does not contain any curves that are trivial in $S$ or $A$, as then by the irreducibility of $M$ and the incompressibility of $S$ we could reduce $|S\cap A|$.
It also does not contain any arcs that are $\del$-parallel in $S$ or $A$ as $S$ is $\del$-incompressible and $M$ is irreducible and $\del$-irreducible.
As any arc in an annulus that starts and ends on the same boundary component is $\del$-parallel, this means that $S\cap A$ contains none of these.
Thus $S$ intersects $A$ in a collection of spanning arcs (that is, arcs that run from one boundary component of $A$ to the other) and vertical fibers.
By the same reasoning, the same holds for the intersection of $S$ with each annulus of $\del M - \del A$.
Note that as $S$ is embedded it must intersect each annulus of $A$ and $\del M - \del A$ in only one of these two types.
If $S$ intersects any annulus in a spanning arc, it intersects all neighbouring annuli to that one in a spanning arc, and hence (as $M$ is connected) all annuli in the collection.
Thus the two types are incompatible, so $S\cap A$ must consist of only one of the two.

Let $M_0$ be $M\backslash\backslash A$, and let $S_0$ be $S\backslash \backslash A$ in $M_0$.
We claim that $S_0$ is incompressible: consider the boundary of some compressing disc.
This curve bounds a disc in $S$ as $S$ is incompressible, and that disc intersects the annuli $A$ in simple closed curves. We can use the irreducibility of $M$ to isotope $S$ through this disc, in the process reducing $|S\cap A|$.

Suppose that $S\cap A$ consists of vertical fibers.
Let $M_1$ be the non-singular-fiber-neighbourhood component of $M_0$, and let $S_1$ be $S_0\cap M_1$.
Recall the classification of incompressible surfaces in the solid torus from Proposition~\ref{prop:incompressibleinsolidtori}.
Then $S_1$ is a collection of vertical annuli: it is incompressible, not a meridian disc, and cannot be an incompressible non-orientable surface as one of its boundary curves intersects the meridian once, which is odd.
The remaining part of $S$, its intersection with the singular fiber neighbourhoods, can vary: if a given fiber has odd multiplicity, $S$ does not intersect the fiber so must be disjoint from the neighbourhood by the minimality of $|S\cap A|$.
If the multiplicity is even, it may intersect the neighbourhood in a punctured non-orientable incompressible surface.
Gluing up, we find that $S$ is pseudo-vertical as claimed. 

Suppose that $S\cap A$ consists of spanning arcs.
(In this case $\Gamma$ must be empty, as otherwise $S$ would intersect it.)
We claim that we can isotope $S$ so that $S_0$ is $\del$-incompressible: if it is not, let $D$ be a non-trivial $\del$-compression disc for $S_0$.
Consider the arc $\alpha = \del D \cap \del M_0$.
We will isotope $\alpha$ so that it is contained in $\del M_0\cap \del M$.
In this situation we are done: as $S$ is $\del$-incompressible, we can use this boundary compression to reduce $|S\cap A|$.
Note that $\alpha$ consists (up to isotopy) of a collection of arcs in annuli. These annuli are alternately from $\del M_0\cap \del M$ and $\del M_0\cap A$.
If $\alpha$ is contained in one component of $\del M_0\cap A$, we can use an isotopy in a collar of the boundary to push $\alpha$ into an adjacent component of $\del M_0 \cap \del M$.
Otherwise, starting at one end of $\alpha$, use an isotopy in the collar of the boundary to push this arc $\alpha$ into the adjacent annulus.
We can continue this until $\alpha$ is contained in a single annulus.
Thus $S_0$ is $\del$-incompressible so is a collection of meridian discs in each of the solid tori, and hence is horizontal.
\end{proof}

Finally, we classify the Seifert fibered spaces over Euler characteristic zero orbifolds.

\begin{lemma}
\label{lemma:horizontalmobiusclassification}
    The only Seifert fibered spaces containing horizontal M\"obius bands are those with the Seifert data $[D^2, 1/2, 1/2]$ or that of a circle bundle over a M\"obius band.
    The only additional Seifert fibered space containing a horizontal annulus has the Seifert structure of a circle bundle over an annulus.
\end{lemma}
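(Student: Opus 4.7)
The plan is to exploit the fact that a horizontal surface $F$ in a Seifert fibered space $M$ with base orbifold $B$ is precisely a (topological) manifold cover of $B$ under the Seifert projection $p$, unbranched away from the singular locus and branched with full multiplicity $p_i$ over each cone point of order $p_i$. Since $F$ is a M\"obius band or an annulus, $\chi(F)=0$, and the identity $\chi(F)=\deg(p|_F)\cdot \chi^{\mathrm{orb}}(B)$ together with positivity of the degree forces $\chi^{\mathrm{orb}}(B)=0$; since $F$ has non-empty boundary, so does $B$. Hence $B$ must be one of the three compact Euclidean $2$-orbifolds with boundary: the annulus, the M\"obius band, or the disc $D^2(2,2)$ with two cone points of order two.

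Next I would determine the manifold covers of each candidate base orbifold by examining torsion-free subgroups of $\pi_1^{\mathrm{orb}}(B)$. The annulus and the M\"obius band both have orbifold fundamental group $\ZZ$: every finite cover of the annulus is an annulus, while the M\"obius band is covered by itself in degree one and by annuli in every higher degree. For $D^2(2,2)$ the orbifold fundamental group is $\ZZ_2\ast \ZZ_2\cong D_\infty$; every torsion-free subgroup must avoid the order-two reflections and therefore lies inside the unique index-two translation subgroup $\ZZ$, so every manifold cover of $D^2(2,2)$ is likewise an annulus.

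Combining these two steps, a horizontal M\"obius band forces $B$ to be a M\"obius band, which is the only base orbifold admitting a M\"obius-band manifold cover, and then $M$ is a circle bundle over the M\"obius band with no singular fibres; the orientable total space of such a bundle is homeomorphic to $K\ttimes I$, which also admits the Seifert fibration $[D^2,1/2,1/2]$, accounting for both Seifert data listed in the statement. A horizontal annulus is in addition compatible with $B$ being an annulus, contributing the circle bundle over an annulus to the second assertion. The main obstacle will be the $D^2(2,2)$ case, where one must pin down the torsion-free subgroups of $D_\infty$ carefully in order to exclude M\"obius-band covers; this same case is also what produces the $[D^2,1/2,1/2]$ entry, via the homeomorphism of $K\ttimes I$ with the orientable circle bundle over the M\"obius band.
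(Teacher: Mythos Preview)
Your approach is essentially the paper's: both use $\chi(F)=\deg\cdot\chi^{\mathrm{orb}}(B)$ to force $\chi^{\mathrm{orb}}(B)=0$, enumerate the three compact Euclidean $2$-orbifolds with boundary (annulus, M\"obius band, $D^2(2,2)$), and then check which admit the required manifold covers. Your torsion-free subgroup analysis of $D_\infty$ is more thorough than the paper's treatment, which simply lists $D^2(2,2)$ without verifying; in fact your observation that $D^2(2,2)$ has only annulus manifold covers is correct (it is orientable as an orbifold, since order-two rotations preserve orientation), so the $[D^2,1/2,1/2]$ entry in the statement indeed appears only via the homeomorphism with the circle bundle over the M\"obius band, exactly as you reconcile it.
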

\noindent In the first two cases $M$ is homeomorphic to $K\ttimes I$.

\begin{proof}
    Write $S$ for the M\"obius band.
    Let $M$ be such a Seifert fibered space.
    As $\chi(S) = 0$, the base orbifold $G$ has $\chi(G) = 0$.
    If we write $\chi(G) =2-a-b-\sum_i (1-\frac{1}{p_i})$ where $b$ is the number of boundary components, $a$ is twice its genus (if orientable) or its nonorientable genus (otherwise), and $(p_i)$ is the multiplicities of the singular fibers, then we can see that $a + (b-1) + \sum_i (1-\frac{1}{p_i}) = 1$.
    As $b\geq 1$, each of these terms is nonnegative.

    That $M$ contains a horizontal M\"obius band is equivalent to the statement that the base orbifold $G$ of $M$ is covered by a M\"obius band.
    We can work by cases: if $a >0$, we have a M\"obius band with no singular fibers.
    If $b > 1$, then $b$ must be 2 so we have an annulus, which the M\"obius band does not cover but the annulus (trivially) does.
    Otherwise, $a=0$ and $b=1$, so $G$ is a disc with some singular fibers with multiplicities such that $\sum_i (1-\frac{1}{p_i}) = 1$.
    For each $p > 1$, we have $\frac{1}{2} \leq 1-\frac{1}{p} < 1$, so the only solution is two singular fibers, each with multiplicity two.
\end{proof}

\subsection{Conventions and notation}
We will write $T^2$ for the 2-torus and $K$ for the Klein bottle.

\noindent If $S$ is a properly-embedded sub-manifold of a (piecewise-linear) manifold $M$, the manifold $M\backslash\backslash S$ is the complement of a small open neighbourhood $N(S)$ of $S$ in $M$.

\noindent If $A$ is a subset of a manifold, $|A|$ is the number of connected components of $A$.

\begin{convention}
\label{convention:handlestructure}
We take the definition of a handle structure to require the following:
\begin{enumerate}
    \item each $k$-handle, with product structure $D^k\times D^{3-k}$, intersects the handles of lower index in exactly $\del D^k \times D^{3-k}$, and is disjoint from the other $k$-handles;
    \item 1-handles and 2-handles intersect in a manner compatible with their respective product structures; that is, a 1-handle $D^1\times D^2$ intersects each 2-handle $D^2\times D^1$ in segments of the form $D^1\times \gamma$ in the 1-handle and $\lambda\times D^1$ in the 2-handle, where $\gamma$ and $\lambda$ are collections of arcs in $\del D^2$ in the respective product structures.
\end{enumerate}
\end{convention}

\begin{defn}
    The \emph{size} of a triangulation $\mathcal{T}$, $\norm{\mathcal{T}}$, is the number of tetrahedra in it.
    The \emph{size} of a handle structure $\mathcal{H}$, $\norm{\mathcal{H}}$, is the number of 0-handles in it.
\end{defn}

\section{Existence of a minimal degree fundamental horizontal surface}
\label{section:fundamentalhorizontalsurface}

Let $M$ be a Seifert fibered space whose boundary is non-empty, equipped with a triangulation $\mathcal{T}$.
We will show that, so long as $M$ is not on a short list of exceptions, there is a fundamental horizontal surface in $M$ whose induced covering of the base orbifold of the Seifert fibration is of minimal degree.

To produce the desired fundamental surface we will use normal surface theory, as described in Section~\ref{section:normalsurface}, and the facts about incompressible and $\del$-incompressible surfaces in Seifert fibered spaces that we discussed in Section~\ref{section:sfs}.

\begin{prop}
Suppose that $M$ is a Seifert fibered space with non-empty boundary that is not $S^1\times D^2$, $T^2\times I$, or $K\ttimes I$.
Let $\mathcal{T}$ be a triangulation of $M$.
Let $p$ be the lowest common multiple of the multiplicities of the singular fibers.
There is a fundamental horizontal normal surface in $M$ that is a degree $p$ cover of the underlying orbifold.
If $M$ is $S^1\times D^2$, then there is a fundamental normal meridian disc.
\label{prop:fundamentalhorizontalsurface}
\end{prop}

\begin{proof}
The solid torus case follows from Corollary 6.4 of~\cite{JacoTollefson}.
Otherwise, let $n$ be the number of singular fibers (which may be zero).
Note that $M$ is irreducible and $\del$-irreducible.

Consider $M$ to be constructed by taking a circle bundle $M'$ over a surface $\Sigma$ with non-empty boundary, then gluing $n$ solid tori (that is, neighbourhoods of singular fibers) along vertical annuli to a single boundary component of $M'$.
If one of these solid tori is a neighbourhood of a $(p_i, q_i)$ singular fiber, then there is a meridian curve in its boundary, transverse to the fibration, that intersects the gluing annulus in $p_i$ spanning arcs.

Take a degree $p$ horizontal surface in $M'$: if $\Sigma$ is orientable, this will be $p$ copies of $\Sigma$; otherwise it will be $\floor{\frac{p}{2}}$ copies of the double cover of $\Sigma$, in addition to one copy of $\Sigma$ if $p$ is odd.
Either way, its intersection with each vertical annulus in the boundary of $M'$ will be $p$ spanning arcs.
We can thus take $\frac{p}{p_i}$ meridian discs in each of these singular fiber neighbourhoods and attach them to the degree $p$ horizontal surface in $M'$ to form a degree $p$ horizontal surface in $M$.
This surface is incompressible (as it is a finite degree cover of the base orbifold and hence is $\pi_1$-injective) and $\del$-incompressible (by the same argument on the double of $M$), and does not contain any trivial spheres or discs, so let $F$ be a minimal normal surface that is isotopic to it.

\begin{claim}
\label{claim:fundhoriz:negativeeulerchar}
    The Euler characteristic of $F$ is negative.
\end{claim}

\begin{claimproof}
As $\chi(F)$ is a multiple of the Euler characteristic of the base orbifold $O$ of $M$, it is enough to show that $\chi(O)$ is negative.
We classified the Seifert fibered spaces whose orbifolds have Euler characteristic zero and non-empty boundary in Lemma~\ref{lemma:horizontalmobiusclassification}, and found that a circle bundle over one of them is $K\ttimes I$ or $T^2\times I$.
The only way $\chi(O)$ can be positive is if $O$ is a disc with at most one cone point, in which case $M$ is a solid torus.
Our manifold $M$ is not one of these three manifolds.
\end{claimproof}

Suppose that $F = G_1 + G_2$ is a non-trivial sum of normal surfaces that minimises $|G_1\cap G_2|$ among all such non-trivial decompositions of $F$.
If one of $G_1$ or $G_2$ were not connected, we could write $G_1$, say, as $G_1' \cup G_1''$, and then $F = G_1' + (G_1'' + G_2)$ would be a sum with $|G_1'\cap (G_1'' + G_2)| < |G_1\cap G_2|$, so the $G_i$ must be connected.

\begin{claim}
At least one of $G_1$ and $G_2$ is horizontal.
\end{claim}

\begin{claimproof}
As $F$ is horizontal it is incompressible and $\del$-incompressible, and is not a trivial disc or sphere.
By Theorem~\ref{thm:matveev}, the same holds for $G_1$ and $G_2$.
Thus $G_1$ and $G_2$ are horizontal or pseudo-vertical by Lemma~\ref{lemma:nonorientablesurfacesinSFS}.

Suppose both $G_1$ and $G_2$ are pseudo-vertical. 
As $\del F$ is not a vertical curve, there must be at least one component of each of $\del G_1$ and $\del G_2$ on each boundary component of $M$, so $|\del M|$ is at most two.
As both $G_1$ and $G_2$ have non-empty boundary, they are either annuli or nonorientable surfaces with one boundary component.

Suppose $p$ is odd.
As summing normal surfaces and curves is additive on $\ZZ_2$-homology, $\del F = \del G_1 + \del G_2$ in $H_1(\del M; \ZZ_2)$.
Since $F$ intersects each regular fiber $p$ times, $\del F$ intersects any boundary component in $p'$ curves where $p'$ divides $p$ and thus is odd.
As a consequence, $\del F$ is nontrivial in the restriction to the $\ZZ_2$-homology of each boundary component.
Since $p$ is odd, there are no even multiplicity singular fibers, so there are no nonorientable pseudo-vertical surfaces.
In this case, as $G_1$ and $G_2$ are both vertical annuli, we can show that $\del (G_1 + G_2)$ is trivial in $\ZZ_2$-homology on at least one boundary component.
If there is only one boundary component, $\del (G_1 + G_2) \equiv 2\del G_1 \equiv (0,0)\in H_1(T^2; \ZZ_2)$.
If there are two, as there is at least one component of the boundary of each of the surfaces $G_1$ and $G_2$ on each boundary component, $G_1\cup G_2$ intersects each boundary component as a union of two vertical curves, which similarly is trivial in $\ZZ_2$-homology.
Either way, we have a contradiction.

Otherwise $p$ is even so $F$ is orientable.
Then $2F$, the double of $F$ as a normal surface vector, is minimal in its isotopy class and is incompressible and $\del$-incompressible.
As $2F = 2G_1 + 2G_2$, by Theorem~\ref{thm:jacotollefson} the doubles of $G_1$ and $G_2$ are incompressible and $\del$-incompressible.
As $2G_i$ is orientable, and $\del (2G_i)$ is two copies of $\del G_i$ and so consists of vertical fibers, $2G_i$ is vertical and thus has Euler characteristic 0.
But as $\chi(2G_i) = 2\chi(G_i)$, each $G_i$ also has Euler characteristic zero, so $\chi(F) = 0$, which contradicts Claim~\ref{claim:fundhoriz:negativeeulerchar}.
\end{claimproof}

We can thus assume that $G_1$ is horizontal.
It remains to show that the degree of its induced covering of the base orbifold is $p$.
Now, $G_1$ intersects the singular fiber neighbourhoods (which were cut out by vertical annuli) in a collection of meridian discs.
A meridian disc around a multiplicity $p_i$ fiber intersects the relevant vertical annulus in $p_i$ spanning arcs.
We know that, up to isotopy, $G_1$ intersects the circle bundle $M'$ as a horizontal surface, and so in particular intersects each vertical annulus in the boundary component of $M'$ along which we glued the singular fibers in an equal number of spanning arcs.
Thus this number must be a multiple of all of the multiplicities: that is, it is $kp$ for some integer $k$, recalling that $p$ is the lowest common multiple of the $p_i$, and $G_1|_{M'}$ is a degree $kp$ cover of $\Sigma$.
(If $n = 0$, taking the lowest common multiple of the empty set to be 1 by definition, this reasoning holds vacuously.)

Note that $k$ is at least one, and $\frac{1}{k}\chi(G_1) = \chi(F) < 0$ by Claim~\ref{claim:fundhoriz:negativeeulerchar}.
Thus $\chi(G_1)$ is uniquely maximised when $k = 1$; as $\chi(G_1) \geq \chi(F)$, and this maximum achieves equality, $k$ must be 1 and hence $G_1$ is a degree $p$ horizontal surface.
Thus there is a fundamental such surface.
\end{proof}

\section{Split handle structures}
\label{section:ths}

In this section we introduce \ths{}s, which naturally arise when we cut handle structures along normal surfaces.
The motivation for \ths{}s is that they support a theory of normal surfaces (compare to Section~\ref{section:normalsurface}) and their complexity does not grow fast when we cut along one of these normal surfaces.
We will use them in Section~\ref{section:fundamentalannulicollection} to show that there is a maximal collection of normal vertical annuli whose edge weight is at most $c^{\norm{\mathcal{T}}^2}$.

To begin, we change perspectives from the triangulation $\mathcal{T}$ to its dual handle structure $\mathcal{H}$.
Taking this dual is a standard, canonical operation, and there is a natural inclusion map from normal surfaces in the triangulation to normal surfaces in the dual handle structure.

\begin{defn}
\label{defn:dual_handle}
Let $\mathcal{T}$ be a triangulation (or cell structure) of a 3-manifold $M$.
The \emph{dual handle structure} $\mathcal{H}$ for $M$ is formed by taking one $(3-k)$-handle for each $k$-simplex (or cell) of $\mathcal{T}$ that is not contained in the boundary and gluing them in the corresponding way.
\end{defn}

Split handle structures naturally arise when we cut along normal surfaces in these dual handle structures.
We keep track of the \emph{forbidden region}, which is the part of the boundary that comes from the normal surface, and of \emph{parallelity pieces}, which are $I$-bundles over surfaces from regions where the normal surface runs close to itself.
Figure~\ref{fig:splithandleexample} is a motivating example; it shows the result of cutting a tetrahedron (in some larger triangulation) along an elementary triangle, from the point of view of first, the triangulation, and second, the dual split handle structure.
The part of the boundary of the cut-open tetrahedron that comes from the elementary surface is shaded in red.

\begin{figure}[th]
  \centering
  \begin{subfigure}[b]{0.43\textwidth}
  	\centering
   	\resizebox{0.9\textwidth}{!}{
            \tikzset{every picture/.style={line width=0.75pt}} %set default line width to 0.75pt        
            \begin{tikzpicture}[x=0.75pt,y=0.75pt,yscale=-1,xscale=1]
            %uncomment if require: \path (0,300); %set diagram left start at 0, and has height of 300
            %shape: Polygon [id:ds3839572475779154] 
            \node (1) at (246.43, 78.86) {};
            \node (2) at (384.43,146.86) {};
            \node (3) at (180.43,209.86) {};
            \node (4) at (298.43,211.86) {};
            \draw[rounded corners=2pt, color=teal,line width=3pt, draw opacity=1 ][fill=red  ,fill opacity=0.4 ] (293,100.7) -- (265.36,128.52) -- (223,122.2) -- cycle ;
            %straight Lines [id:da1981318570930215] 
            \draw[rounded corners=1pt]    (1.center) -- (3.center) -- (4.center) -- cycle;
            %straight Lines [id:da8670976390901821] 
            \draw[rounded corners=1pt] (1.center) -- (4.center) -- (2.center) -- cycle;
            %straight Lines [id:da6389589208908308] 
            \draw  [dash pattern={on 4.5pt off 4.5pt}]  (3.center) -- (2.center);
            \end{tikzpicture}
        }
    \caption{An elementary disc in a tetrahedron.}
  \end{subfigure}
  \begin{subfigure}[b]{0.55\textwidth}
  	\centering
    \resizebox{0.25\textwidth}{!}{
   	\hspace*{-0.05\textwidth}
            \tikzset{every picture/.style={line width=0.75pt}} %set default line width to 0.75pt        
            \begin{tikzpicture}[x=0.75pt,y=0.75pt,yscale=-1,xscale=1]
            %uncomment if require: \path (0,300); %set diagram left start at 0, and has height of 300
            %Shape: Polygon [id:ds7689870188340604] 
            \draw[rounded corners=2pt, red,draw opacity=1 ][fill=red  ,fill opacity=0.4 ] (288.93,100.07) -- (265.36,128.52) -- (224.93,121.57) -- cycle ;
            %Straight Lines [id:da1981318570930215] 
            \draw[rounded corners=1pt]   (288,101) -- (246.43,79.86) -- (226.5,121.57) ;
            \draw[rounded corners=1pt]   (226.5,121.57) -- (246.43,79.86) -- (265.36,128) ;
            \end{tikzpicture}
        }
        \resizebox{0.8\textwidth}{!}{
            \tikzset{every picture/.style={line width=0.75pt}} %set default line width to 0.75pt        
            \begin{tikzpicture}[x=0.75pt,y=0.75pt,yscale=-1,xscale=1]
            %uncomment if require: \path (0,300); %set diagram left start at 0, and has height of 300
            %shape: Polygon [id:ds3839572475779154] 
            \node (2) at (384.43,146.86) {};
            \node (3) at (180.43,209.86) {};
            \node (4) at (298.43,211.86) {};
            \node (5) at (291,101.5) {};
            \node (6) at (265.36,127) {};
            \node (7) at (225,122.2) {};
            \draw[rounded corners=2pt, color=red,draw opacity=1 ][fill=red  ,fill opacity=0.4 ] (293,100.7) -- (265.36,128.52) -- (223,122.2) -- cycle ;
            %straight Lines [id:da1981318570930215] 
            \draw[rounded corners=1pt]    (7.center) -- (3.center) -- (4.center) -- (6.center);
            %straight Lines [id:da8670976390901821] 
            \draw[rounded corners=1pt] (6.center) -- (4.center) -- (2.center) -- (5.center);
            %straight Lines [id:da6389589208908308] 
            \draw  [dash pattern={on 4.5pt off 4.5pt}]  (3.center) -- (2.center);
            \end{tikzpicture}
        }
    \caption{After cutting along it.}
  \end{subfigure}
  \begin{subfigure}[b]{0.43\textwidth}
  	\centering
   	\resizebox{0.8\textwidth}{!}{
        \begin{tikzpicture}
            \node[circle, draw=black, line width=0.75pt, minimum size=18pt] (1) at (0,{(sqrt(3)*1.2-0.05)}) {};
            \node[circle, draw=black, line width=0.75pt, minimum size=18pt, fill=white] (2) at (-0.95*1.2,0) {};
            \node[circle, draw=black, line width=0.75pt, minimum size=18pt, fill=white] (3) at (0.95*1.2,0) {};
            \node[circle, draw=black, line width=0.75pt, minimum size=18pt, fill=white] (4) at (0,{(1/sqrt(3))+0.05}) {};
            \draw[line width=0.75pt, double, double distance=3pt, rounded corners = 4pt] (2) -- (3) -- (4) -- (2) -- (1) -- (4);
            \draw[line width=0.75pt, double, double distance=3pt, rounded corners = 4pt] (1) -- (3);
            %\draw[line width=2pt, rounded corners=8pt, red] (1.center)+(-0.07,0) to[out=0,in=-100] (1.center) to[out=-100, in=45] (2.center) to[out=155, in=200] (1.center);
            \draw[line width=2pt, teal, rounded corners=8pt] (1.center) -- (2.center) -- (4.center) -- cycle;
        \end{tikzpicture}
        }
    \caption{The boundary curve of the elementary disc in the boundary graph of the dual handle.}
  \end{subfigure}
  \begin{subfigure}[b]{0.55\textwidth}
  	\centering
   \resizebox{0.8\textwidth}{!}{
       \resizebox{0.45\textwidth}{!}{
        \begin{tikzpicture}
            \fill[fill=red, fill opacity=0.4] (0,{(sqrt(3)*1.2-0.05)}) -- (-0.95*1.2,0) -- (0,{(1/sqrt(3))+0.05});
            \node[circle, draw=black, line width=0.75pt, minimum size=18pt, fill=white] (1) at (0,{(sqrt(3)*1.2-0.05)}) {};
            \node[circle, draw=black, line width=0.75pt, minimum size=18pt, fill=white] (2) at (-0.95*1.2,0) {};
            \node[circle, draw=black, line width=0.75pt, minimum size=18pt, fill=white] (4) at (0,{(1/sqrt(3))+0.05}) {};
            %\draw[line width=0.75pt, double, double distance=3pt, rounded corners = 4pt] (2) -- (3) -- (4);
            \draw[line width=0.75pt, double, double distance=3pt, rounded corners = 4pt] (2) -- (1) -- (4) -- (2);
            %\draw[line width=0.75pt, double, double distance=3pt, rounded corners = 4pt] (1) -- (3);
        \end{tikzpicture}
        }
   	\resizebox{0.7\textwidth}{!}{
        \begin{tikzpicture}
            \fill[fill=red, fill opacity=0.4] (0,{(sqrt(3)*1.2-0.05)}) -- (-0.95*1.2,0) -- (0,{(1/sqrt(3))+0.05});
            \node[circle, draw=black, line width=0.75pt, minimum size=18pt, fill=white] (1) at (0,{(sqrt(3)*1.2-0.05)}) {};
            \node[circle, draw=black, line width=0.75pt, minimum size=18pt, fill=white] (2) at (-0.95*1.2,0) {};
            \node[circle, draw=black, line width=0.75pt, minimum size=18pt, fill=white] (3) at (0.95*1.2,0) {};
            \node[circle, draw=black, line width=0.75pt, minimum size=18pt, fill=white] (4) at (0,{(1/sqrt(3))+0.05}) {};
            \draw[line width=0.75pt, double, double distance=3pt, rounded corners = 4pt] (2) -- (3) -- (4);
            \draw[line width=0.75pt, double, double distance=3pt, rounded corners = 4pt] (2) -- (1) -- (4) -- (2);
            \draw[line width=0.75pt, double, double distance=3pt, rounded corners = 4pt] (1) -- (3);
        \end{tikzpicture}
        }
        }
    \caption{The boundary graphs of the resulting split handles. (We have rotated the first one so that the forbidden region is contained in the graph as we draw it on the plane, rather than being its complement in $S^2$.)}
  \end{subfigure}
  \caption[The pieces resulting from cutting along an elementary triangle in a tetrahedron, and the split handle structure point of view.]{The pieces resulting from cutting along an elementary triangle in a tetrahedron (in the interior of some larger triangulation), and the picture in the dual split handle structure. To depict the handles, we draw their boundary graphs (in $S^2$). The forbidden region is shaded in red. There are no parallelity pieces.}
  \label{fig:splithandleexample}
\end{figure}

Split handle structures are reminiscent of sutured handle structures, which were devised by Lackenby~\cite[\S5]{LackenbyCertificationKnottedness} to allow for normal-surface-type arguments in the context of Scharlemann's combinatorial approach to Gabai's sutured manifold decompositions~\cite{ScharlemannSutured}.
Motivated this, we will use the term ``sutures'' for the boundary of the forbidden region in $\del M$.

Figure~\ref{fig:splithandleexamplenottriangulationdual} shows a split 0-handle whose boundary graph contains sutures.
We have cut along an elementary disc in the dual split handle structure that does not correspond to an elementary disc in the triangulation, as its boundary passes through a lake (which is dual to a vertex of the tetrahedron).
Each arc in a lake produces a suture when we cut along it.

Normal surfaces in split handle structures are also evocative of the normal surface theory of handle structures with boundary pattern, if we require that elementary discs do not intersect the pattern.
However, in the boundary pattern case, we usually require that the pattern is contained in the 1-skeleton of the induced handle structure on the boundary (see~\cite[\S3]{Matveev}).
Here, the sutures are normal curves.

\begin{figure}[th]
  \centering
  \begin{subfigure}[b]{0.4\textwidth}
  	\centering
   	\resizebox{0.7\textwidth}{!}{
        \begin{tikzpicture}
            \node[circle, draw=black, line width=0.75pt, minimum size=18pt] (1) at (0,{(sqrt(3)*1.2-0.05)}) {};
            \node[circle, draw=black, line width=0.75pt, minimum size=18pt, fill=white] (2) at (-0.95*1.2,0) {};
            \node[circle, draw=black, line width=0.75pt, minimum size=18pt, fill=white] (3) at (0.95*1.2,0) {};
            \node[circle, draw=black, line width=0.75pt, minimum size=18pt, fill=white] (4) at (0,{(1/sqrt(3))+0.05}) {};
            \draw[line width=0.75pt, double, double distance=3pt, rounded corners = 4pt] (2) -- (3) -- (4) -- (2) -- (1) -- (4);
            \draw[line width=0.75pt, double, double distance=3pt, rounded corners = 4pt] (1) -- (3);
            \coordinate (o) at ($(1.center)+(-0.05, -0.2)$);
            \coordinate (x) at ($(1.center)+(-0.1, -0.1)$);
            \coordinate (y) at ($(2.center)+(-0.1, 0.1)$);
            \coordinate (t) at ($(1.center)+(-0.7, -0.35)$);
            \coordinate (r) at ($(1.center)+(-0.8, -0.3)$);
            \coordinate (p) at ($(1)!0.5!(2) + (0.3,0.05)$);
            \coordinate (s) at ($(1)!0.5!(2) + (0,-0.4)$);
            \coordinate (q) at ($(1)!0.5!(2) + (-0.5, 0.35)$);
            \draw [tension=0.7, teal, line width=2pt] plot [smooth cycle] coordinates {(x) (s) (y) (q)};
        \end{tikzpicture}
    }
    \caption{The boundary of an elementary disc in the boundary graph in $\del H$.}
  \end{subfigure}
  \begin{subfigure}[b]{0.45\textwidth}
  	\centering
   	\resizebox{0.65\textwidth}{!}{
        \begin{tikzpicture}
            \node (1) at (0,{(sqrt(3)*1.2-0.05)}) {};
            \node (2) at (-0.95*1.2,0) {};
            \node (3) at (0.95*1.2,0) {};
            \node (4) at (0,{(1/sqrt(3))+0.05}) {};
            \coordinate (o) at ($(1.center)+(-0.05, -0.2)$);
            \coordinate (x) at ($(1.center)+(-0.1, -0.1)$);
            \coordinate (y) at ($(2.center)+(-0.1, 0.1)$);
            \coordinate (t) at ($(1.center)+(-0.7, -0.35)$);
            \coordinate (r) at ($(1.center)+(-0.8, -0.3)$);
            \coordinate (p) at ($(1)!0.5!(2) + (0.3,0.05)$);
            \coordinate (s) at ($(1)!0.5!(2) + (0,0.1)$);
            \coordinate (q) at ($(1)!0.5!(2) + (-0.4, 0.35)$);
            \filldraw[line width=2pt, red, fill=red, fill opacity=0.4, tension=0.7] plot [smooth cycle] coordinates {(x) (s) (y) (q)};
            \node[circle, draw=black, line width=0.75pt, minimum size=18pt, fill=white] (5) at (0,{(sqrt(3)*1.2-0.05)}) {};
            \node[circle, draw=black, line width=0.75pt, minimum size=18pt, fill=white] (6) at (-0.95*1.2,0) {};
            \node[circle, draw=black, line width=0.75pt, minimum size=18pt, fill=white] (7) at (0.95*1.2,0) {};
            \node[circle, draw=black, line width=0.75pt, minimum size=18pt, fill=white] (8) at (0,{(1/sqrt(3))+0.05}) {};
            \draw[line width=0.75pt, double, double distance=3pt, rounded corners = 4pt] (5) -- (8) -- (6) -- (7) -- (8) -- (6);
            \draw[line width=0.75pt, double, double distance=3pt, rounded corners = 4pt] (5) -- (7);
            %\draw[line width=2pt, rounded corners=8pt, red] (1.center)+(-0.07,0) to[out=0,in=-100] (1.center) to[out=-100, in=45] (2.center) to[out=155, in=200] (1.center);
        \end{tikzpicture}
    }
    \resizebox{0.33\textwidth}{!}{
    \begin{tikzpicture}
       % \clip (-2, -1.7) rectangle (2,1.5);
            \coordinate (1) at (0,0.95);
            \coordinate (2) at (0,-0.95);
            \coordinate (a) at (-0.4, 0);
            \coordinate (b) at (-0.9, 0);
            \coordinate (c) at ($(1.west)+(-0.1,0)$);
            \coordinate (f) at ($(1.west)+(-0.2,0)$);
            \coordinate (d) at ($(2.west)+(-0.1,0)$);
            \coordinate (e) at ($(2.west)+(-0.2,0)$);
            \filldraw[fill=red, fill opacity=0.4, rounded corners=1pt, draw=red, line width=2pt] (1.center) -- (c) .. controls (a) .. (d) -- (e) .. controls (b) .. (f) -- (c);
            \draw[line width=0.75pt, double, double distance=3pt] (1) -- (2);
            \node[circle, draw=black, line width=0.75pt, minimum size=16pt, fill=white] at (1) {};
            \node[circle, draw=black, line width=0.75pt, minimum size=16pt, fill=white] at (2) {};
        \end{tikzpicture}
    }
    \caption{The two split 0-handles resulting from cutting along this elementary disc.}
  \end{subfigure}
  \caption[The induced split handle structure from cutting along an elementary disc in a split 0-handle that does not correspond to one in a tetrahedron.]{The induced split handle structure from cutting along an elementary disc in a split 0-handle $H$ that does not correspond to one in a tetrahedron. The forbidden region is shaded in red, and the sutures are the thick red lines.}
  \label{fig:splithandleexamplenottriangulationdual}
\end{figure}

\begin{defn}
    Let $P$ be an $I$-bundle over a surface $\Sigma$, so $P$ is equipped with a homeomorphism to $\Sigma\pmttimes I$.
    The \emph{horizontal boundary} of $P$, $\del_h P$, is $\Sigma\pmttimes \del I$, and the \emph{vertical boundary} of $P$, $\del_v P$, is $\del\Sigma\pmttimes I$.
\end{defn}

The horizontal and vertical boundary of $\Sigma\times I$ (where $\Sigma$ is a disk with two punctures) is shown in Figure~\ref{fig:horizboundary}.

\begin{figure}[th]
  \centering
  \resizebox{0.6\textwidth}{!}{
\begin{tikzpicture}[x=0.75pt,y=0.75pt,yscale=-1,xscale=1]
%uncomment if require: \path (0,300); %set diagram left start at 0, and has height of 300

%Shape: Ellipse [id:dp5039811682790551] 
\draw  [pattern={Lines[angle=90, distance=2.1mm, line width=0.2mm]}] (99.5,180.38) .. controls (99.5,152.55) and (152.22,130) .. (217.25,130) .. controls (282.28,130) and (335,152.55) .. (335,180.38) .. controls (335,208.2) and (282.28,230.75) .. (217.25,230.75) .. controls (152.22,230.75) and (99.5,208.2) .. (99.5,180.38) -- cycle ;
%Shape: Ellipse [id:dp8659822113157324] 
\draw  [fill={rgb, 255:red, 255; green, 255; blue, 255 }  ,fill opacity=1 ] (100,162.38) .. controls (100,137.87) and (152.16,118) .. (216.5,118) .. controls (280.84,118) and (333,137.87) .. (333,162.38) .. controls (333,186.88) and (280.84,206.75) .. (216.5,206.75) .. controls (152.16,206.75) and (100,186.88) .. (100,162.38) -- cycle ;
%Shape: Ellipse [id:dp7962301651750902] 
\draw  [pattern={Lines[angle=90, distance=2.1mm, line width=0.2mm]}] (250.3,170.45) .. controls (250.3,162.1) and (261.16,155.32) .. (274.55,155.32) .. controls (287.94,155.32) and (298.8,162.1) .. (298.8,170.45) .. controls (298.8,178.8) and (287.94,185.58) .. (274.55,185.58) .. controls (261.16,185.58) and (250.3,178.8) .. (250.3,170.45) -- cycle ;
%Shape: Ellipse [id:dp793372779051363] 
\draw  [draw opacity=0][fill={rgb, 255:red, 255; green, 255; blue, 255 }  ,fill opacity=1 ] (250.3,178.83) .. controls (250.3,170.47) and (261.16,163.7) .. (274.55,163.7) .. controls (287.94,163.7) and (298.8,170.47) .. (298.8,178.83) .. controls (298.8,187.18) and (287.94,193.95) .. (274.55,193.95) .. controls (261.16,193.95) and (250.3,187.18) .. (250.3,178.83) -- cycle ;
%Shape: Ellipse [id:dp7304857185448717] 
\draw   (250.3,170.45) .. controls (250.3,162.1) and (261.16,155.32) .. (274.55,155.32) .. controls (287.94,155.32) and (298.8,162.1) .. (298.8,170.45) .. controls (298.8,178.8) and (287.94,185.58) .. (274.55,185.58) .. controls (261.16,185.58) and (250.3,178.8) .. (250.3,170.45) -- cycle ;
%Shape: Arc [id:dp1658067195236672] 
\draw  [draw opacity=0][fill={rgb, 255:red, 255; green, 255; blue, 255 }  ,fill opacity=1 ] (251.48,174.16) .. controls (254.63,168.09) and (263.77,163.7) .. (274.55,163.7) .. controls (285.21,163.7) and (294.27,167.99) .. (297.52,173.96) -- (274.55,178.83) -- cycle ; \draw   (251.48,174.16) .. controls (254.63,168.09) and (263.77,163.7) .. (274.55,163.7) .. controls (285.21,163.7) and (294.27,167.99) .. (297.52,173.96) ;  

%Shape: Ellipse [id:dp8378930488221635] 
\draw  [pattern={Lines[angle=90, distance=2.1mm, line width=0.2mm]}] (142.5,158.66) .. controls (142.5,147.87) and (159.98,139.13) .. (181.55,139.13) .. controls (203.12,139.13) and (220.6,147.87) .. (220.6,158.66) .. controls (220.6,169.44) and (203.12,178.19) .. (181.55,178.19) .. controls (159.98,178.19) and (142.5,169.44) .. (142.5,158.66) -- cycle ;
%Shape: Ellipse [id:dp8408739529039577] 
\draw  [draw opacity=0][fill={rgb, 255:red, 255; green, 255; blue, 255 }  ,fill opacity=1 ] (142.5,169.47) .. controls (142.5,158.68) and (159.98,149.94) .. (181.55,149.94) .. controls (203.12,149.94) and (220.6,158.68) .. (220.6,169.47) .. controls (220.6,180.26) and (203.12,189) .. (181.55,189) .. controls (159.98,189) and (142.5,180.26) .. (142.5,169.47) -- cycle ;
%Shape: Ellipse [id:dp38369588251283093] 
\draw   (142.5,158.66) .. controls (142.5,147.87) and (159.98,139.13) .. (181.55,139.13) .. controls (203.12,139.13) and (220.6,147.87) .. (220.6,158.66) .. controls (220.6,169.44) and (203.12,178.19) .. (181.55,178.19) .. controls (159.98,178.19) and (142.5,169.44) .. (142.5,158.66) -- cycle ;
%Shape: Arc [id:dp2676428395498668] 
\draw  [draw opacity=0][fill={rgb, 255:red, 255; green, 255; blue, 255 }  ,fill opacity=1 ] (143.73,164.59) .. controls (148.06,156.17) and (163.35,149.94) .. (181.55,149.94) .. controls (199.49,149.94) and (214.6,155.99) .. (219.18,164.23) -- (181.55,169.47) -- cycle ; \draw   (143.73,164.59) .. controls (148.06,156.17) and (163.35,149.94) .. (181.55,149.94) .. controls (199.49,149.94) and (214.6,155.99) .. (219.18,164.23) ;  
%Straight Lines [id:da7556354046512157] 
\draw[arrows = {-Latex[width=0pt 10, length=10pt]}]    (311,113.4) -- (262.34,140.82) ;
%Straight Lines [id:da6051966374853347] 
\draw[arrows = {-Latex[width=0pt 10, length=10pt]}]    (232.2,255) -- (185.23,216.8) ;
%Straight Lines [id:da7200956484513279] 
\draw[arrows = {-Latex[width=0pt 10, length=10pt]}]    (232.2,255) -- (160,147.53) ;
%Straight Lines [id:da5953218603394516] 
\draw[arrows = {-Latex[width=0pt 10, length=10pt]}]    (232.2,255) -- (267.43,160.45) ;

% Text Node
\draw (202,262) node [anchor=north west][inner sep=0.75pt]   [align=left] {$\displaystyle \del_{v}( \Sigma \times I)$};
% Text Node
\draw (292,90.2) node [anchor=north west][inner sep=0.75pt]   [align=left] {$\displaystyle \del_{h}( \Sigma \times I)$};

\end{tikzpicture}}
  \vspace*{10pt}
  \caption{The horizontal and vertical boundaries of the product of a disc with two punctures, $\Sigma$, with the interval, $I$. The fibration of the visible part of the vertical boundary $\del\Sigma\times I$ is striped. One component of the horizontal boundary is visible.}
  \label{fig:horizboundary}
\end{figure}

\begin{defn}
A \emph{\ths{}} $\mathcal{H}$ for a compact orientable 3-manifold $M$ is a partition of $M$ into:
\begin{enumerate}
    \item $k$-handles for $k$ between 0 and 3, where each $k$-handle has a homeomorphism to $D^k\times D^{3-k}$, and
    \item parallelity pieces, each with a homeomorphism to $\Sigma\pmttimes I$ for $\Sigma$ a compact surface
\end{enumerate}
and with a distinguished \emph{forbidden region} $\mathcal{I} \sub \del M$ such that the following conditions hold.
Write $\mathcal{H}^k$ for the collection of $k$-handles, $\mathcal{H^P}$ for the collection of parallelity pieces, and $\del_h \mathcal{H^P}$ and $\del_v \mathcal{H^P}$ respectively for the collection of the horizontal and vertical boundaries of the parallelity pieces.
The \emph{boundary graph} of a 0-handle $H$ in a \ths{} is the decorated graph in $\del H \cong S^2$ whose vertices, which we call \emph{islands}, are the components of $H\cap \mathcal{H}^{1}$; whose edges (which we call \emph{bridges}) are the components of $H\cap \mathcal{H}^{2}$ and $H\cap \mathcal{H^P}$; and which may have \emph{sutures}, which are the arcs of $H\cap \del\mathcal{I}-(\mathcal{H}^1\cup \mathcal{H}^2\cup \mathcal{H^P})$.
We say that the boundary graph divides $\del H$ into islands, bridges, \emph{lakes} (components of intersection between $\del H$ and $(\mathcal{H}^{3}\cup \del M) - \mathcal{I}$), and \emph{forbidden regions} which are components of $H\cap \mathcal{I}$; the sutures are the intersections between the forbidden regions and the lakes. 

We require that:
\begin{enumerate}
    \item each $k$-handle $D^k\times D^{3-k}$ intersects handles of lower index in exactly $\del D^k\times D^{3-k}$, and is disjoint from the other $k$-handles;
    \item the boundary graph of each 0-handle is connected;
    \item each parallelity piece is disjoint from the 2- and 3-handles and the other parallelity pieces;
    \item the forbidden region $\mathcal{I}\sub \del M$ contains $\del_h \mathcal{H^P}$;
    \item each 1-handle $D^1\times D^2$ intersects 2-handles $D^2\times D^1$ in components that are of the form $D^1\times \gamma$ in the 1-handle and $\lambda \times D^1$ in the 2-handle, where $\gamma$ and $\lambda$ collections of arcs in $\del D^2$ in the respective product structures;
    \item the intersection of any component $P\cong \Sigma\pmttimes I$ of ${\mathcal{H^P}}$ with a 1-handle $D^1\times D^2$ is as $D^1\times \gamma$ in the 1-handle and $\lambda \times I$ in the parallelity piece, where $\gamma$ is a collection of arcs in $\del D^2$ and $\lambda$ is a collection of arcs in $\del\Sigma$.
\end{enumerate}

\noindent If $\mathcal{I}$ is not empty, we require the following.
Write $(\del\mathcal{H})^{k}$ for each $k$ for the components of $\del M \cap \mathcal{H}^{k}$, and $(\del\mathcal{H})^{\mathcal{P}}$ for the components of intersection of $\del M$ with $\mathcal{H^P}$.
Note that $(\del\mathcal{H})^{0}$ and $(\del\mathcal{H})^{2}$ are collections of discs, and $(\del\mathcal{H})^{1}$ and $(\del\mathcal{H})^{\mathcal{P}}$ are collections of discs and possibly some annuli.
We require that $\del \mathcal{I}$ avoids $(\del\mathcal{H})^{2}$, runs through discs of $(\del\mathcal{H})^{0}$ and $(\del\mathcal{H})^{1}$ in arcs that each do not start and end on the same component of $(\del\mathcal{H})^{0}\cap (\del\mathcal{H})^{1}$, and intersects each component of $\del_v \mathcal{H^P}\cap \del M$ in exactly two arcs or curves, each of which is transverse to the $I$-bundle structure from the parallelity pieces.
\end{defn}

If the forbidden region is empty (which implies that there are no parallelity pieces), this is the usual notion of handle structure.

\begin{defn}
    A surface in a \ths{} is \emph{$\del$-compressible} if it admits a non-trivial $\del$-compression disc that is disjoint from the forbidden region.
\end{defn}

Whenever we refer to a $\del$-compression disc, we require that the disc is disjoint from the forbidden region.

\subsection{Normal surfaces}

Normal surfaces in split handle structures generalise the standard definition in handle structures (see Definition 3.4.1 of~\cite{Matveev}), which itself generalises the definition in the triangulation setting (see Section~\ref{section:normalsurface}).

\begin{defn}
\label{defn:normalsurface}
    A properly-embedded surface in $M$ is \emph{normal} with respect to a \ths{} if it satisfies the following conditions:
    \begin{enumerate}
        \item \label{defn:normalsurface:itm:3handles} it is disjoint from the 3-handles and from the forbidden region;
        \item \label{defn:normalsurface:itm:2handles} it is transverse to the $I$-bundle structure of the 2-handles $D^2\times I$ and the parallelity pieces $\Sigma\pmttimes I$, and is disjoint from their horizontal boundaries;
        \item \label{defn:normalsurface:itm:parallelitycpt} no component of it is contained in a parallelity piece;
        \item \label{defn:normalsurface:itm:1handles} $F$ intersects each 1-handle $D^1\times D^2$ in $D^1\times \lambda$ where $\lambda$ is a collection of disjoint proper arcs in the island $\{0\}\times D^2$, such that no component of $\lambda$ starts and ends on the same connected component of the intersection of the island with a lake;
        \item \label{defn:normalsurface:itm:0handles} $F$ intersects each 0-handle in discs, called \emph{elementary discs}, such that the boundary curve of each of these discs crosses each bridge and lake at most once, and if a bridge and a lake are adjacent, intersects only one of the pair.
        \item \label{defn:normalsurface:itm:lakes} the intersection of $F$ with each lake does not contain any closed curves or arcs that start and end on the same component of the intersection of the lake with an island.
    \end{enumerate}
\end{defn}

It follows from the definition that if $F$ is a normal surface, then it intersects each 2-handle $D^2\times D^1$ in sheets of the form $D^2\times\{*\}$, and similarly intersects each parallelity piece in sheets that are a section of the $I$-bundle (that is, isotopic to $\Sigma\times\{0\}$) or, if $\Sigma$ is nonorientable, the double cover of a section.
Note that the boundary of an elementary disc of $F$ in the boundary graph of a 0-handle $H$ determines the disc.
We encourage the reader to satisfy themselves that the disc boundaries shown in the split 0-handles in Figures~\ref{fig:splithandleexample} and~\ref{fig:splithandleexamplenottriangulationdual} are in fact those of elementary discs, and that the resulting boundary graphs are possible boundary graphs of 0-handles in a split handle structure.
Figures~\ref{fig:elementarynonex} shows some non-examples of elementary discs, and some further examples are shown later in this section when illustrating the induced \ths{} construction in Figure~\ref{fig:elementaryex}.

\begin{figure}[th]
  \centering
  \resizebox{0.6\textwidth}{!}{
\begin{tikzpicture}[x=0.75pt,y=0.75pt,yscale=-1,xscale=1]
\node[circle, draw=black, line width=0.75pt, minimum size=18pt, fill=white] (1) at (79.15,112.17) {};
\node[circle, draw=black, line width=0.75pt, minimum size=18pt, fill=white] (2) at (177.81,110.83) {};
\node[circle, draw=black, line width=0.75pt, minimum size=18pt, fill=white] (3) at (271,109.5) {};
\node[circle, draw=black, line width=0.75pt, minimum size=18pt, fill=white] (4) at (226,193.5) {};
\draw[line width=0.75pt, double, double distance=4pt, rounded corners = 4pt] (1) -- (2) -- (3) -- (4) -- (2);

\coordinate (p) at ($(2)!0.5!(4) + (-30,10)$);
%Shape: Ellipse [id:dp550033188157182] 
\draw  [color=teal,draw opacity=1, line width=2pt] (79.17,112.17) .. controls (79.17,101.12) and (100.17,92.17) .. (126.08,92.17) .. controls (151.99,92.17) and (173,101.12) .. (173,112.17) .. controls (173,123.21) and (151.99,132.17) .. (126.08,132.17) .. controls (100.17,132.17) and (79.17,123.21) .. (79.17,112.17) -- cycle ;
\draw[line width=2pt, color=teal, rounded corners=8pt] (2.center) -- (3.center) -- (4.center) -- (p) -- cycle;

\end{tikzpicture}
  }
  \vspace*{10pt}
  \caption{Two non-examples of elementary disc boundaries in a split 0-handle. The disc boundaries are drawn in teal. The one on the left crosses the same lake in two arcs, while the one on the right intersects a bridge and an adjacent lake.
  Both therefore fail condition~\ref{defn:normalsurface:itm:0handles} of Definition~\ref{defn:normalsurface}.}
  \label{fig:elementarynonex}
\end{figure}

\begin{defn}
    An \emph{admissible isotopy} of a surface with respect to a \ths{} is an isotopy of the surface in the manifold that fixes the forbidden region $\mathcal{I}$ as a set.
    A \emph{normal isotopy} of a normal surface in a \ths{} is an isotopy of the surface in the manifold that fixes each of $\mathcal{I}$, $\mathcal{H}^{k}$ for each $k$, and $\mathcal{H^P}$ as a set.
\end{defn}

\begin{defn}
\label{defn:duplicate}
   A normal surface in a \ths{} with forbidden region $\mathcal{I}$ is \emph{\pns{}} if it has two components that are normally isotopic, or it has a component $F$ that is normally isotopic to one component of the horizontal boundary of a small collar of a component $I$ of the forbidden region.
\end{defn}

We give the normalisation procedure in the appendix (Procedure~\ref{procedure:normalisation}) and show that it terminates in Proposition~\ref{prop:normalisationterminates}.
We then prove the following.

\begin{restatable*}{prop}{cannormaliseincompressiblesurfaces}
\label{prop:cannormaliseincompressiblesurfaces}
    Let $F$ be an incompressible $\del$-incompressible properly-embedded surface in an irreducible $\del$-irreducible manifold $M$ with \ths{} $\mathcal{H}$, disjoint from the forbidden region, such that no component is a trivial sphere or disc or is entirely contained in a parallelity piece.
    Then $F$ is admissibly isotopic to a normal surface.
\end{restatable*}

\begin{defn}
\label{defn:inducedths}
Let $F$ be a normal surface in $M$, with respect to a \ths{} $\mathcal{H}$.
The \emph{induced \ths{}} on $M\backslash\backslash F$ is constructed as follows.

Consider $\mathcal{H}\backslash\backslash F$.
Set its forbidden region $\mathcal{I}$ to be the union of the forbidden region from $M$ and the image of $F$ in $M\backslash\backslash F$. (As $F$ is normal, these are disjoint.)
Since $F$ is disjoint from the 3-handles, we can continue to view them as 3-handles in $\mathcal{H}\backslash\backslash F$.

A component of a $k$-handle in $\mathcal{H}\backslash\backslash F$ will either become a $k$-handle or a part of a parallelity piece.
This is determined as follows.
First, if $P\cong \Sigma\pmttimes I$ is a parallelity piece of $\mathcal{H}$, as $F$ intersects $P$ in sheets transverse to the $I$-bundle structure, each component of $P\backslash\backslash F$ inherits an $I$-bundle structure as either $\Sigma\pmttimes I$ or possibly, if $\Sigma$ is not orientable, as $\tilde{\Sigma}\times I$, where $\tilde{\Sigma}$ is the double cover of $\Sigma$.
Thus we can view each component of $P\backslash\backslash F$ as a parallelity piece.

If $H\cong D^2\times D^1$ is a 2-handle of $\mathcal{H}$, a component $C$ of $H\backslash\backslash F$ is itself an $I$-bundle over $D^2$, with two horizontal boundary components, each of which arises from intersection with a 3-handle, with $\mathcal{I}$, or with $\del M - \mathcal{I}$.
If both components are in $\mathcal{I}$, set $C$ to be a parallelity piece; otherwise, view $C$ as a 2-handle.

If $H\cong D^1\times D^2$ is a 1-handle of $\mathcal{H}$, a component $C$ of $H\backslash\backslash F$ is a parallelity piece if its boundary consists of the following components: first, two components in the forbidden region $\mathcal{I}$ (whether from the forbidden region in $M$ or arising from intersection with $F$); second, two components arising from intersection with pieces of 0-handles from $M$; and finally, the two remaining components where each is a component of intersection with one of $\del M$, a single 2-handle, or a single parallelity piece.
In this case, $C$ has an $I$-bundle structure by setting $\del_h C$ to be the two components in the forbidden region, and choosing a product structure on the remaining boundary, $\del_v C$, such that each of the four components described above is a union of fibers, and then interpolating.
(We can choose this product structure to be compatible with the product structure on any parallelity pieces defined thus far that $C$ intersects).

Finally, if $H\cong D^0\times D^3$ is a 0-handle and $C$ is a component of it, consider the boundary of $C$.
Set $C$ to be a parallelity piece if its boundary contains two components of intersection with $\mathcal{I}$ and if there is a product structure $D^2\times I$ on $C$ such that its intersection with $\mathcal{I}$ is $D^2\times \del I$, and each component of its intersection with any of the other handles created so far is of the form $\alpha \times I$, where $\alpha$ is an arc or curve in $\del D^2$.
Again, note that we can choose this product structure to be compatible with any parallelity pieces that $C$ intersects.
Otherwise, set $C$ to be a 0-handle.

Now, take the parallelity pieces of $\mathcal{H}$ to be the union of the parallelity pieces described so far, which we equipped with compatible product structures where they intersected.

We write $\mathcal{H}\backslash\backslash F$ for the induced \ths{} on $M\backslash\backslash F$.
\end{defn}

Figures~\ref{fig:splithandleexample} and~\ref{fig:splithandleexamplenottriangulationdual} both give examples of an induced split 0-handle from cutting along an elementary disc.
Some more examples are shown in Figure~\ref{fig:elementaryex}, where one of the three components produced after cutting along the discs in Figure~\ref{fig:elementaryex:3} is a parallelity handle, as it lies between a forbidden region and a parallel elementary disc.

\begin{lemma}
    If $F$ is a normal surface in a \ths{} $\mathcal{H}$, then the induced \ths{} $\mathcal{H}\backslash\backslash F$ is (as the name suggests) a \ths{}.
   \label{lemma:normalsurfacegivesths}
\end{lemma}

\begin{proof}
    If a piece from a $k$-handle in $\mathcal{H}\backslash\backslash F$ becomes a parallelity piece in the induced \ths{}, then any adjacent pieces from handles of higher index will also be parallelity pieces.
    As a result, each $k$-handle $D^k\times D^{3-k}$ will intersect handles of lower index in $\del D^k\times D^{3-k}$ and will be disjoint from the other $k$-handles.
    The boundary graph of each 0-handle will be connected as the boundary graph of each 0-handle arises from taking a connected boundary graph, removing some discs from it, and adding in the boundary of these discs (as we add a new suture for each time the boundary of one of the discs runs through a lake).
    As $F$ does not intersect any 3-handles, so each one becomes a 3-handle in $\mathcal{H}\backslash\backslash F$, each parallelity piece will be disjoint from the 2- and 3-handles.
    As intersections of parallelity pieces with 1-handles must necessarily arise from intersections between a 2-handle and a 1-handle, and the product structure of the 2-handle is compatible with that of the parallelity piece, the intersections between parallelity pieces and 1-handles are of the required form.
    The intersection of $\del \mathcal{I}$ with the boundary has the required form as $F$ is normal.
\end{proof}

\begin{figure}[th]
  \centering
  \begin{subfigure}[b]{0.3\textwidth}
  	\centering
  \resizebox{\textwidth}{!}{
\begin{tikzpicture}
\node[circle, draw=black, line width=0.75pt, minimum size=18pt, fill=white] (2) at (-1,-1) {};
\node[circle, draw=black, line width=0.75pt, minimum size=18pt, fill=white] (3) at (1,-1) {};
\node[circle, draw=black, line width=0.75pt, minimum size=18pt, fill=white] (4) at (0,0.5) {};
\draw[line width=0.75pt, double, double distance=4pt, rounded corners = 4pt] (2) -- (3) -- (4) -- (2);

\coordinate (a) at ($(4)!0.5!(3) + (0.4,0.1)$);
\coordinate (b) at ($(4)!0.5!(3) + (-0.4,-0.1)$);
\draw[line width=2pt, color=teal, rounded corners=2pt] (4.center) -- ($(4.center) + (0.15,-0.05)$) .. controls (a) .. ($(3.center) + (0.15,0.05)$) -- (3.center) -- ($(3.center) + (-0.08,0.05)$).. controls (b) .. ($(4.center) + (-0.08,-0.05)$) -- cycle;

\end{tikzpicture}
  }
  \caption{An elementary disc boundary (in teal) in the boundary graph of a split 0-handle.}
  \label{fig:elementaryex:1}
  \end{subfigure}
  \begin{subfigure}[b]{0.58\textwidth}
  	\centering
   \resizebox{0.55\textwidth}{!}{
    \begin{tikzpicture}
       % \clip (-2, -1.7) rectangle (2,1.5);
            \coordinate (1) at (0,0.5);
            \coordinate (2) at (1,-1);
            \coordinate (3) at (-1, -1);
            \coordinate (a) at (0.4, -0.5);
            \coordinate (b) at (0.9, 0);
            \coordinate (c) at ($(1.east)+(0.1,0)$);
            \coordinate (f) at ($(1.east)+(0.2,0)$);
            \coordinate (d) at ($(2.east)+(0.1,0)$);
            \coordinate (e) at ($(2.east)+(0.2,0)$);
            \filldraw[color = red, fill opacity=0.4, rounded corners=1pt, draw=red, line width=2pt] (1.center) -- (c) .. controls (a) .. (d) -- (e) .. controls (b) .. (f) -- (c);
            \draw[line width=0.75pt, double, double distance=4pt] (1) -- (3) -- (2);
            \node[circle, draw=black, line width=0.75pt, minimum size=16pt, fill=white] at (1) {};
            \node[circle, draw=black, line width=0.75pt, minimum size=16pt, fill=white] at (2) {};
            \node[circle, draw=black, line width=0.75pt, minimum size=16pt, fill=white] at (3) {};
        \end{tikzpicture}
  }
\resizebox{0.25\textwidth}{!}{
    \begin{tikzpicture}
       % \clip (-2, -1.7) rectangle (2,1.5);
            \coordinate (1) at (0,0.95);
            \coordinate (2) at (0,-0.95);
            \coordinate (a) at (0.4, 0);
            \coordinate (b) at (0.9, 0);
            \coordinate (c) at ($(1.east)+(0.1,0)$);
            \coordinate (f) at ($(1.east)+(0.2,0)$);
            \coordinate (d) at ($(2.east)+(0.1,0)$);
            \coordinate (e) at ($(2.east)+(0.2,0)$);
            \filldraw[color = red, fill opacity=0.4, rounded corners=1pt, draw=red, line width=2pt] (1.center) -- (c) .. controls (a) .. (d) -- (e) .. controls (b) .. (f) -- (c);
            \draw[line width=0.75pt, double, double distance=4pt] (1) -- (2);
            \node[circle, draw=black, line width=0.75pt, minimum size=16pt, fill=white] at (1) {};
            \node[circle, draw=black, line width=0.75pt, minimum size=16pt, fill=white] at (2) {};
        \end{tikzpicture}
  }
  \caption{The two resulting split 0-handles from cutting along the disc in Figure~\ref{fig:elementaryex:1}. The forbidden region is shaded in red and the sutures are thick red lines.}
  \label{fig:elementaryex:2}
  \end{subfigure}
  \begin{subfigure}[b]{0.3\textwidth}
  	\centering
   \resizebox{\textwidth}{!}{
    \begin{tikzpicture}
       % \clip (-2, -1.7) rectangle (2,1.5);
            \coordinate (1) at (0,0.5);
            \coordinate (2) at (1,-1);
            \coordinate (3) at (-1, -1);
            \coordinate (a) at (0.4, -0.5);
            \coordinate (b) at (0.9, 0);
            \coordinate (c) at ($(1.east)+(0.1,0)$);
            \coordinate (f) at ($(1.east)+(0.2,0)$);
            \coordinate (d) at ($(2.east)+(0.1,0)$);
            \coordinate (e) at ($(2.east)+(0.2,0)$);
            \filldraw[color = red, fill opacity=0.4, rounded corners=1pt, draw=red, line width=2pt] (1.center) -- (c) .. controls (a) .. (d) -- (e) .. controls (b) .. (f) -- (c);
            \draw[line width=0.75pt, double, double distance=4pt] (1) -- (3) -- (2);
            \node[circle, draw=black, line width=0.75pt, minimum size=16pt, fill=white] at (1) {};
            \node[circle, draw=black, line width=0.75pt, minimum size=16pt, fill=white] at (2) {};
            \node[circle, draw=black, line width=0.75pt, minimum size=16pt, fill=white] at (3) {};

            \coordinate (a) at ($(1)!0.5!(3) + (0.5,-0.1)$);
            \coordinate (b) at ($(1)!0.5!(3) + (-0.5,0.2)$);
            \draw[line width=2pt, color=teal, rounded corners=2pt] ($(1.center) + (-0.05,-0.05)$)-- ($(1.center) + (0,-0.15)$) .. controls (a) .. ($(3.center) + (0.15,0.05)$) -- (3.center) -- ($(3.center) + (-0.08,0.05)$).. controls (b) .. ($(1.center) + (-0.2,-0.05)$) -- cycle;
            \draw[line width=2pt, color=teal, rounded corners=3pt] ($(1.center)+(0.15,0.1)$) .. controls (1.2,0.2) .. ($(2.east)+(0.25,0.3)$) -- ($(2.east)$) .. controls (0.2,-0.5) .. ($(1.center)+(0.08,-0.1)$) -- cycle;
        \end{tikzpicture}
  }
  \caption{Two elementary discs (drawn in teal) in the boundary of the split 0-handle on the left of Figure~\ref{fig:elementaryex:2}.}
  \label{fig:elementaryex:3}
  \end{subfigure}
  \begin{subfigure}[b]{0.58\textwidth}
  	\centering
   \resizebox{0.48\textwidth}{!}{
    \begin{tikzpicture}
       % \clip (-2, -1.7) rectangle (2,1.5);
            \coordinate (1) at (0,0.5);
            \coordinate (2) at (1,-1);
            \coordinate (3) at (-1, -1);
            \coordinate (a) at (0.4, -0.5);
            \coordinate (y) at (-0.4, -0.5);
            \coordinate (b) at (0.9, 0);
            \coordinate (v) at (-0.9, 0);
            \coordinate (c) at ($(1.east)+(0.1,0)$);
            \coordinate (z) at ($(1.west)+(-0.1,0)$);
            \coordinate (f) at ($(1.east)+(0.2,0)$);
            \coordinate (u) at ($(1.west)+(-0.2,0)$);
            \coordinate (d) at ($(2.east)+(0.1,0)$);
            \coordinate (x) at ($(3.west)+(-0.1,0)$);
            \coordinate (e) at ($(2.east)+(0.2,0)$);
            \coordinate (w) at ($(3.west)+(-0.2,0)$);
            \filldraw[color = red, fill opacity=0.4, rounded corners=1pt, draw=red, line width=2pt] (1.center) -- (c) .. controls (a) .. (d) -- (e) .. controls (b) .. (f) -- (c);
            \filldraw[color = red, fill opacity=0.4, rounded corners=1pt, draw=red, line width=2pt] (1.center) -- (z) .. controls (y) .. (x) -- (w) .. controls (v) .. (u) -- (z);
            \draw[line width=0.75pt, double, double distance=4pt] (3) -- (2);
            \node[circle, draw=black, line width=0.75pt, minimum size=16pt, fill=white] at (1) {};
            \node[circle, draw=black, line width=0.75pt, minimum size=16pt, fill=white] at (2) {};
            \node[circle, draw=black, line width=0.75pt, minimum size=16pt, fill=white] at (3) {};
        \end{tikzpicture}
  }
\resizebox{0.49\textwidth}{!}{
    \begin{tikzpicture}
       % \clip (-2, -1.7) rectangle (2,1.5);
            \coordinate (1) at (0,0.95);
            \coordinate (2) at (0,-0.95);
            \coordinate (a) at (0.4, 0);
            \coordinate (b) at (0.9, 0);
            \coordinate (c) at ($(1.east)+(0.1,0)$);
            \coordinate (f) at ($(1.east)+(0.2,0)$);
            \coordinate (d) at ($(2.east)+(0.1,0)$);
            \coordinate (e) at ($(2.east)+(0.2,0)$);
            \filldraw[color = red, fill opacity=0.4, rounded corners=1pt, draw=red, line width=2pt] (1.center) -- (c) .. controls (a) .. (d) -- (e) .. controls (b) .. (f) -- (c);
            \draw[line width=0.75pt, double, double distance=4pt] (1) -- (2);
            \node[circle, draw=black, line width=0.75pt, minimum size=16pt, fill=white] at (1) {};
            \node[circle, draw=black, line width=0.75pt, minimum size=16pt, fill=white] at (2) {};
            \node at (2,0) {$D^2\times I$};
        \end{tikzpicture}
  }
  \caption{The three resulting split handles from cutting along the disc in Figure~\ref{fig:elementaryex:3}. The forbidden region is shaded in red and the sutures are thick red lines. The first two handles are split 0-handles. The third (not drawn) is a parallelity handle $D^2\times I$ in the induced split handle structure.}
  \label{fig:elementaryex:4}
  \end{subfigure}
  \caption{The boundary graphs of the induced split handles from cutting along a sequence of elementary discs in a 0-handle.}
  \label{fig:elementaryex}
\end{figure}

We wish to give an abstract characterisation of the sort of split handle structures that arise from taking a triangulation of a 3-manifold, considering its dual handle structure, and cutting this along a normal surface.
At the dual handle structure stage, the combinatorial 0-handles we can obtain are well-understood.
If such a 0-handle is in the interior of the manifold, its boundary graph will be the complete graph on four vertices; if it intersects the boundary, its boundary graph will be a subgraph of this -- it will be subtetetrahedral in the language of Lackenby~\cite[\S6.2]{LackenbyCertificationKnottedness}.
We generalise these ideas to the split handle structure setting.

\begin{defn}
\label{defn:subtetrahedral}
    We call a 0-handle $H$ {semitetrahedral} if it is disjoint from the forbidden region and its boundary graph is a connected subgraph of the complete graph on four vertices.
    We call a \ths{} {semitetrahedral} if all of its 0-handles are semitetrahedral.
    
    A 0-handle $H$ is \emph{subtetrahedral} if there is some 0-handle $H'$ in some semitetrahedral handle structure $\mathcal{H}$ with normal surface $F$ such that $H$ has the same boundary graph as one of the non-parallelity pieces obtained from $H'$ in the induced \ths{} $\mathcal{H}\backslash\backslash F$.
    We say that a \ths{} is \emph{subtetrahedral} if all of its 0-handles are subtetrahedral.
\end{defn}

\begin{restatable}{lemma}{subtetrahedralths}
\label{lemma:subtetrahedralths}
Let $\mathcal{T}$ be a (material) triangulation of a (compact) 3-manifold $M$, such that the intersection of each tetrahedron with $\del M$ is connected and contractible.
The dual handle structure to $\mathcal{T}$ (see Definition~\ref{defn:dual_handle}) is a subtetrahedral \ths{} with empty forbidden region and no parallelity pieces.

If $\mathcal{H}$ is a subtetrahedral \ths{}, and $F$ is a normal surface in $\mathcal{H}$, then the induced \ths{} on $\mathcal{H}\backslash\backslash F$ is also subtetrahedral.
\end{restatable}

\begin{proof}
    The complement of the boundary graph of a 0-handle will deformation retract to the intersection of the corresponding tetrahedron with the boundary, so the handles of this dual handle structure have connected boundary graphs.
    The remainder follows by definition.
\end{proof}

As in the triangulation setting, we can view normal surfaces algebraically.
Let $\mathcal{H}$ be a {subtetrahedral} \ths{}.
Let $d_H$ be the maximal number of types of elementary disc in any subtetrahedral 0-handle, which is finite by Lemma~\ref{lemma:finiteelementarydisctypes}.
We can associate a vector in $\ZZ^{d_H\norm{\mathcal{H}}}$ to each normal surface $F$ in $\mathcal{H}$  by counting the number of elementary discs in $F$ of each type.

\begin{lemma}
    Let $\mathcal{H}$ be a subtetrahedral \ths{}.
    Given a vector $v$ in $\ZZ^{d_H\norm{\mathcal{H}}}$, there is at most one normal surface in $H$ corresponding to this vector.
    \label{lemma:vectorsgiveuniquenormalinths}
\end{lemma}

\begin{proof}
    Let $F$ be some normal surface corresponding to $v$.
    We wish to show that $F$ is uniquely determined (up to normal isotopy).
    As in the theory of normal surfaces in triangulations, since the boundary of each elementary disc in a 0-handle $H$ separates $\del H$, the vector $v$ determines $F$ in the 0-handles.
    We claim that the intersection of $F$ with the 0-handles determines its intersection with the 1- and 2-handles.
    First, the 1-handles: by Definition~\ref{defn:normalsurface}, a normal surface intersects any 1-handle $D^1\times D^2$ in $D^1\times \alpha$ where $\alpha$ is an arc in $D^2$.
    Thus the image of $F$ in this 1-handle is determined by its image in $\del D^1\times D^2$, which is exactly the intersection of the 1-handle with 0-handles.
    The 2-handles follow similarly: each 2-handle $D^2\times D^1$ intersects handles of lower index in $\del D^2\times D^1$.
    As each 1-handle intersects some 0-handle, each 2-handle intersects a 0-handle.
    Now, $F$ must run through each 2-handle in sheets, so to determine the image of $F$ in this 2-handle it suffices to count the number of sheets.
    Thus the number of components of $F$ in the intersection of the 2-handle with the 0-handle is sufficient information to pin $F$ down.
    
    The remaining handles through which $F$ may run are the parallelity pieces; again, it is enough to determine the number of sheets.
    By the same reasoning as in the 2-handle case, if a given parallelity piece intersects any 1-handle, it intersects a 0-handle, so the only pathological possibility is if a parallelity piece $P$ intersects no 0- or 1-handles at all.
    As parallelity pieces are disjoint from the 2- and 3-handles, $P$ is an entire connected component of $\mathcal{H}$.
    But then by condition~\ref{defn:normalsurface:itm:parallelitycpt} of Definition~\ref{defn:normalsurface}, $F$ is disjoint from $P$.
    Thus $F$ is uniquely determined by $v$.
\end{proof}

We can interpret summation in $\ZZ^{d_h\norm{\mathcal{H}}}$ geometrically as follows.

\begin{procedure}[Normal surface sum in \ths{}s]
\label{procedure:normalsurfacesum}
Suppose $G_1$ and $G_2$ are normal surfaces in a subtetrahedral \ths{} $\mathcal{H}$, such that if we view them as vectors in $\ZZ^{d_H\norm{\mathcal{H}}}$, $F = G_1 + G_2$ is also a vector representing a normal surface.
We can construct $F$ as follows.
Realise $G_1$ and $G_2$ so that they are disjoint in the 0-handles and transverse elsewhere.
In the 1-handles, use the product structure to ensure that any two components of $G_1\cap\mathcal{H}^{1}$ and $G_2\cap\mathcal{H}^{1}$ intersect in a single arc that is contained in some $\{*\}\times D^2$ in the 1-handle $D^1\times D^2$.
As we have two embedded surfaces, there are no triple points of intersection.
Now, in the 2-handles and parallelity pieces, we know that $G_1$ and $G_2$ intersect them in sheets transverse to the $I$-bundle which are determined by their boundaries.
Fix a 2-handle or parallelity piece $H\cong \Sigma\pmttimes I$.
So long as at most one of $G_1\cap H$ and $G_2\cap H$ contains a nonorientable piece, we can realise all the components of $G_1\cap H$ and $G_2\cap H$ as follows.
Let $H'$ be the complement in $H$ of a collar of the boundary of $H$.
Take the correct number of sheets of $G_1$ and $G_2$ in $H'$.
Then interpolate in the collar to achieve the required intersection pattern of $G_1$ and $G_2$ in $\del_v H$, such that $G_1\cap G_2\cap H$ contains no closed curves.
If both of them contain a nonorientable sheet in $H$, then we can realise the two nonorientable sheets to intersect in $H'$ in a single arc, and then again interpolate in the collar to achieve the required intersection pattern.
Now at each arc of intersection of $G_1$ and $G_2$ in $H$, if we cut along the arc then we have two choices of how to reglue to resolve the intersection: a choice of \emph{switch}.
This is determined by the picture at one of the arc's endpoints $p$ in $\del_v H$.
The \emph{regular switch}, which is the choice of gluing that produces sheets transverse to the product structure, produces two sheets of the same types as those we started with (unless both the sheets were nonorientable, in which case it produces one sheet that is their orientable double).
The \emph{irregular switch} is the other choice; the two are shown (at $p$) in Figure~\ref{fig:irregularregularswitch}.

\begin{figure}[th]
    \centering
    \begin{subfigure}{0.43\textwidth}
        \centering
        \resizebox{\textwidth}{!}{

\tikzset{every picture/.style={line width=0.75pt}} %set default line width to 0.75pt        

\begin{tikzpicture}[x=0.75pt,y=0.75pt,yscale=-1,xscale=1]
%uncomment if require: \path (0,300); %set diagram left start at 0, and has height of 300

%Shape: Arc [id:dp1094805446524697] 
\draw  [draw opacity=0] (346.44,195.9) .. controls (333.16,207.78) and (296.12,216.44) .. (252.46,216.62) .. controls (206.86,216.81) and (168.39,207.69) .. (156.67,195.09) -- (252.33,186.62) -- cycle ; \draw   (346.44,195.9) .. controls (333.16,207.78) and (296.12,216.44) .. (252.46,216.62) .. controls (206.86,216.81) and (168.39,207.69) .. (156.67,195.09) ;  
%Shape: Arc [id:dp8358287018470624] 
\draw  [draw opacity=0] (343.06,142.97) .. controls (334.18,156.67) and (295.67,167.11) .. (249.45,167.31) .. controls (205.22,167.49) and (167.95,158.23) .. (156.96,145.48) -- (249.33,137.31) -- cycle ; \draw   (343.06,142.97) .. controls (334.18,156.67) and (295.67,167.11) .. (249.45,167.31) .. controls (205.22,167.49) and (167.95,158.23) .. (156.96,145.48) ;  
%Curve Lines [id:da19709751133968723] 
\draw[color=teal]    (159,162.5) .. controls (178.5,180.75) and (293,204.75) .. (342.5,190.25) ;
%Curve Lines [id:da7086389295577975] 
\draw[color=teal]    (161,186.75) .. controls (173.91,194.4) and (207.99,194.28) .. (243.97,189.55) .. controls (283.21,184.39) and (324.72,173.75) .. (343.5,161.75) ;
 \node[fill,circle,label=below:$p$, scale=0.7] at (244,190) {};

\end{tikzpicture}
        }
        \caption{A point $p$ of intersection of $G_1$ and $G_2$ in the boundary of a 2-handle or parallelity piece.}
    \end{subfigure}
    \begin{subfigure}{0.43\textwidth}
        \centering
        \resizebox{\textwidth}{!}{
\tikzset{every picture/.style={line width=0.75pt}} %set default line width to 0.75pt        

\begin{tikzpicture}[x=0.75pt,y=0.75pt,yscale=-1,xscale=1]
%uncomment if require: \path (0,300); %set diagram left start at 0, and has height of 300

%Shape: Arc [id:dp1094805446524697] 
\draw  [draw opacity=0] (346.44,195.9) .. controls (333.16,207.78) and (296.12,216.44) .. (252.46,216.62) .. controls (206.86,216.81) and (168.39,207.69) .. (156.67,195.09) -- (252.33,186.62) -- cycle ; \draw   (346.44,195.9) .. controls (333.16,207.78) and (296.12,216.44) .. (252.46,216.62) .. controls (206.86,216.81) and (168.39,207.69) .. (156.67,195.09) ;  
%Shape: Arc [id:dp8358287018470624] 
\draw  [draw opacity=0] (343.06,142.97) .. controls (334.18,156.67) and (295.67,167.11) .. (249.45,167.31) .. controls (205.22,167.49) and (167.95,158.23) .. (156.96,145.48) -- (249.33,137.31) -- cycle ; \draw   (343.06,142.97) .. controls (334.18,156.67) and (295.67,167.11) .. (249.45,167.31) .. controls (205.22,167.49) and (167.95,158.23) .. (156.96,145.48) ;  
%Curve Lines [id:da19709751133968723] 
\draw[color=teal]    (159,193.25) .. controls (165.16,194.24) and (205.81,186.16) .. (247.5,187.25) .. controls (289.19,188.34) and (330,194.75) .. (343,193.75) ;
%Curve Lines [id:da7086389295577975] 
\draw[color=teal]    (159,162.5) .. controls (174.5,171.75) and (218.46,181) .. (245.5,180.75) .. controls (272.54,180.5) and (324.32,173.05) .. (344.38,161.82) ;
\end{tikzpicture}
        }
        \caption{The regular switch resolution of the intersection.}
    \end{subfigure}
    \begin{subfigure}{0.43\textwidth}
        \centering
        \resizebox{\textwidth}{!}{
        \tikzset{every picture/.style={line width=0.75pt}} %set default line width to 0.75pt        

\begin{tikzpicture}[x=0.75pt,y=0.75pt,yscale=-1,xscale=1]
%uncomment if require: \path (0,300); %set diagram left start at 0, and has height of 300

%Shape: Arc [id:dp1094805446524697] 
\draw  [draw opacity=0] (346.44,195.9) .. controls (333.16,207.78) and (296.12,216.44) .. (252.46,216.62) .. controls (206.86,216.81) and (168.39,207.69) .. (156.67,195.09) -- (252.33,186.62) -- cycle ; \draw   (346.44,195.9) .. controls (333.16,207.78) and (296.12,216.44) .. (252.46,216.62) .. controls (206.86,216.81) and (168.39,207.69) .. (156.67,195.09) ;  
%Shape: Arc [id:dp8358287018470624] 
\draw  [draw opacity=0] (343.06,142.97) .. controls (334.18,156.67) and (295.67,167.11) .. (249.45,167.31) .. controls (205.22,167.49) and (167.95,158.23) .. (156.96,145.48) -- (249.33,137.31) -- cycle ; \draw   (343.06,142.97) .. controls (334.18,156.67) and (295.67,167.11) .. (249.45,167.31) .. controls (205.22,167.49) and (167.95,158.23) .. (156.96,145.48) ;  
%Curve Lines [id:da19709751133968723] 
\draw[color=teal]   (159,162.5) .. controls (173,173.75) and (241,173.25) .. (241,188.25) .. controls (241,203.25) and (184,197.75) .. (161.88,186.82) ;
%Curve Lines [id:da7086389295577975] 
\draw[color=teal]    (343,188.25) .. controls (325,201.75) and (255,202.75) .. (255,188.25) .. controls (255,173.75) and (327,176.75) .. (344.38,161.82) ;
\end{tikzpicture}
        }
        \caption{The irregular switch resolution of the intersection.}
    \end{subfigure}
    \caption{The regular and irregular switches at an intersection curve of normal surfaces $G_1$ and $G_2$.}
    \label{fig:irregularregularswitch}
\end{figure}

We wish to show that choosing all regular switches is compatible.
As the product structures of the 1-handles and the 2-handles or parallelity pieces are transverse, if we consider the arc of intersection in the 1-handle containing $p$, the choice of regular switch at $p$ induces the regular switch on the other end of the arc.
Similarly, as the regular switch is the choice giving sheets transverse to the product structure on the 2-handle or parallelity piece, if we consider the arc in $H$ containing $p$, the choice of regular switch at $p$ induces the regular switch along the whole arc.
Thus there is a global \emph{regular switch} for $G_1\cup G_2$, which produces $F$.
\end{procedure}

We will need to measure the complexity (the \emph{weight}) of a normal surface in a \ths{}.
We will often argue by minimising weight to show that a small normal surface with desired properties exists.
Thus far, when working in the triangulation setting, we have been using edge weight, which is the number of points of intersection of a surface $F$ with the edges of the triangulation.
In the dual handle structure, the corresponding number is the \emph{plate degree}, $p(F)$.
We will need a lexicographically-ordered notion of weight for some of our inductive arguments, which will also incorporate the \emph{beam degree}, $b(F)$.
These notions of complexity are used in normal surface theory in standard handle structures; for example, see the proof of Theorem 3.4.7 in~\cite{Matveev}.

\begin{defn}
    \label{defn:weight}
    The weight of a normal surface $F$ is $(p(F), b(F), |F\cap \del M|)$, where the \emph{plate degree}, $p(F)$, is $|F\cap (\del\mathcal{H}^{2}\cup \del\mathcal{H^P})|$ and the \emph{beam degree}, $b(F)$, is $|F\cap \mathcal{H}^{1}|$.
\end{defn}

\begin{lemma}
    Let $G_1$ and $G_2$ be normal surfaces such that the vector $F = G_1 + G_2$ also corresponds to a normal surface.
    Let $F'$ be an incompressible and $\del$-incompressible surface constructed by resolving the curves of intersection of $G_1\cup G_2$ that includes at least one irregular switch.
    Then $F'$ is isotopic to a normal surface of lower weight than $F$.
\label{lemma:irregularswitchreducestrace}
\end{lemma}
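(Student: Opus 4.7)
The plan is to adapt the classical ``irregular switch reduces weight after normalisation'' argument from normal surface theory (cf.\ Matveev \S4.1) to the \ths{} setting, using Proposition \ref{prop:cannormaliseincompressiblesurfaces} to perform the normalisation.

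First, I would localise to a single irregular switch at an arc $\gamma \subset G_1 \cap G_2$ inside some handle $H$ (a 1-handle, 2-handle, or parallelity piece). By Procedure \ref{procedure:normalsurfacesum}, the regular switch at $\gamma$ pairs the four sheet-boundaries at the endpoints of $\gamma$ so that the resulting sheets are transverse to the $I$-bundle structure of $H$, while the irregular switch pairs them the opposite way. Comparing the elementary discs of $F'$ in the 0-handles adjacent to $\gamma$ with those of $F$, the irregular pairing causes two elementary discs of $F$ meeting along the corresponding bridge to merge into a single elementary disc of $F'$ whose boundary traverses that bridge twice, violating condition~\ref{defn:normalsurface:itm:0handles} of Definition \ref{defn:normalsurface}; hence $F'$ is not normal with respect to $\mathcal{H}$.

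Second, I would apply Procedure \ref{procedure:normalisation} from the appendix to $F'$. Since $F'$ is incompressible, $\del$-incompressible, and disjoint from the forbidden region (the latter because $G_1$ and $G_2$ are normal and switching preserves disjointness from $\mathcal{I}$), Proposition \ref{prop:cannormaliseincompressiblesurfaces} ensures that $F'$ is isotopic to a normal surface $F''$. The normalisation is weight-monotone: each step that removes a non-normal feature either pushes a sheet of $F'$ across a 2-handle or parallelity piece (strictly reducing $p$), pushes a sheet across a 1-handle (strictly reducing $b$), or eliminates a trivial intersection with $\del M$ (strictly reducing $|F\cap \del M|$), and no step increases the weight. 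Since $F'$ enters the procedure with at least one non-normal feature coming from the irregular switch, the resulting $F''$ has weight strictly less than that of $F = G_1 + G_2$ in the lexicographic order.

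The main obstacle is confirming weight-monotonicity carefully in the \ths{} setting: one must check that the non-normal feature created by the irregular switch genuinely triggers a strict weight drop during normalisation, rather than being cancelled by the appearance and disappearance of other features. This reduces to the standard observation that the relevant normalisation moves correspond to $\del$-compressions or compressions along admissible discs, which are weight-reducing. A secondary subtlety arises in parallelity pieces with twisted $I$-bundle, where the local bigon witnessing the reduction lives in the orientation double cover $\tilde{\Sigma} \times I$ and must be shown to descend to an admissible move in $\Sigma \pmttimes I$; this is handled by a short case analysis on the two possible sheet identifications under the deck transformation.
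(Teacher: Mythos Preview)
Your approach is essentially the same as the paper's: identify that an irregular switch produces a non-normal configuration, then invoke the normalisation procedure and its weight-monotonicity to conclude. The paper likewise shows that the irregular switch forces a ``return'' and that Move~2 of Procedure~\ref{procedure:normalisation} then drops the plate degree by at least two.

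There is, however, a localisation error in your description of the non-normal feature. You assert that ``two elementary discs of $F$ meeting along the corresponding bridge merge into a single elementary disc of $F'$ whose boundary traverses that bridge twice.'' This is not what happens: by Procedure~\ref{procedure:normalsurfacesum} the surfaces $G_1$ and $G_2$ are realised \emph{disjointly} in the $0$-handles, so the switch does not touch the $0$-handle pieces at all, and every component of $F'\cap\mathcal{H}^0$ remains a genuine elementary disc. The non-normality of $F'$ lives in the $2$-handles and parallelity pieces: the irregular switch makes the sheets of $F'\cap H$ fail to be transverse to the $I$-bundle, so that the curves $F'\cap\partial_v H$ are no longer all essential. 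The paper makes this precise by counting spanning arcs across the rectangle $Q=H\cap(\text{1-handle})$: before the switch there are $c$ spanning arcs (one per essential curve); after an irregular switch there are at most $c-2$, so at least two curves of $F'\cap\partial_v H$ are inessential. It is Move~2 (compressing these trivial curves), not Move~7, that fires and reduces $p(F')$ by at least two. Your appeal to weight-monotonicity is correct, but because you have misidentified which normality condition is violated, you have not actually exhibited the move that causes the strict drop. The annulus boundary case (both sheets nonorientable in a twisted parallelity piece) also needs the explicit spanning-arc count rather than the vague ``descends from the double cover'' remark you give.
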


\begin{proof}
Take $G_1$ and $G_2$ to minimise $|G_1\cap G_2|$ in their normal isotopy class.
The plate degree $p(F)$ is $p(G_1) + p(G_2)$ as the intersection of $F$ with the boundaries of the 0-handles is the union of the intersections of $G_1$ and $G_2$ with them.
Suppose we take the irregular switch at some point $p$ in the boundary of a 2-handle or parallelity piece $H$.
Write $F'$ for the result of this irregular switch at $p$.
Let $c$ be the number of essential curves of $G_1$ and $G_2$ in the component of $\del_v H$ containing $p$.
%The point $p$ is in some component of intersection $Q$ of $H$ with a 1-handle or the boundary, as $G_1\cup G_2$ is disjoint in the 0-handles.

If we take the irregular switch at $p$, then the annular component of $\del_v H$ containing $p$ has a spanning arc that intersects the surface $F'$ at most $c-2$ times (see Figure~\ref{fig:irregularswitchreducesweight}).
Thus $F'$ intersects $\del_v H$ in at most $c-2$ essential curves.
If there are further curves of intersection, they are trivial in $\del_v H$, so when we normalise $F'$ using Procedure~\ref{procedure:normalisation}, in Move 2 we will compress $F'$ and thus remove these trivial curves of intersection altogether.
As $F'$ is incompressible and $\del$-incompressible, following normalisation there is a component of the resulting surface that is isotopic to $F'$.
The plate degree of this representative of $F'$ is lower than that of $F$, so we have reduced the weight.
\end{proof}

\begin{figure}[th]
    \centering
    \begin{subfigure}{0.43\textwidth}
        \centering
        \resizebox{\textwidth}{!}{

\tikzset{every picture/.style={line width=0.75pt}} %set default line width to 0.75pt        

\begin{tikzpicture}[x=0.75pt,y=0.75pt,yscale=-1,xscale=1]
%uncomment if require: \path (0,300); %set diagram left start at 0, and has height of 300

%Shape: Arc [id:dp1094805446524697] 
\draw  [draw opacity=0] (346.44,195.9) .. controls (333.16,207.78) and (296.12,216.44) .. (252.46,216.62) .. controls (206.86,216.81) and (168.39,207.69) .. (156.67,195.09) -- (252.33,186.62) -- cycle ; \draw   (346.44,195.9) .. controls (333.16,207.78) and (296.12,216.44) .. (252.46,216.62) .. controls (206.86,216.81) and (168.39,207.69) .. (156.67,195.09) ;  
%Shape: Arc [id:dp8358287018470624] 
\draw  [draw opacity=0] (343.06,142.97) .. controls (334.18,156.67) and (295.67,167.11) .. (249.45,167.31) .. controls (205.22,167.49) and (167.95,158.23) .. (156.96,145.48) -- (249.33,137.31) -- cycle ; \draw   (343.06,142.97) .. controls (334.18,156.67) and (295.67,167.11) .. (249.45,167.31) .. controls (205.22,167.49) and (167.95,158.23) .. (156.96,145.48) ;  
%Curve Lines [id:da19709751133968723] 
\draw[color=teal]    (159,162.5) .. controls (178.5,180.75) and (293,204.75) .. (342.5,190.25) ;
%Curve Lines [id:da7086389295577975] 
\draw[color=teal]    (161,186.75) .. controls (173.91,194.4) and (207.99,194.28) .. (243.97,189.55) .. controls (283.21,184.39) and (324.72,173.75) .. (343.5,161.75) ;
 \node[fill,circle,label=below:$p$, scale=0.7] at (244,190) {};
 %Straight Lines [id:da030732204247472916] 
\draw[dashed, line width=1.5pt]    (205.5,163.75) -- (206,213.75) ;

\end{tikzpicture}
        }
        \caption{The dashed spanning arc $\alpha$ minimally intersects $G_1\cup G_2$ $c$ times (away from double points), where $c=2$ in this case.}
    \end{subfigure}
    \begin{subfigure}{0.43\textwidth}
        \centering
        \resizebox{\textwidth}{!}{
        \tikzset{every picture/.style={line width=0.75pt}} %set default line width to 0.75pt        

\begin{tikzpicture}[x=0.75pt,y=0.75pt,yscale=-1,xscale=1]
%uncomment if require: \path (0,300); %set diagram left start at 0, and has height of 300

%Shape: Arc [id:dp1094805446524697] 
\draw  [draw opacity=0] (346.44,195.9) .. controls (333.16,207.78) and (296.12,216.44) .. (252.46,216.62) .. controls (206.86,216.81) and (168.39,207.69) .. (156.67,195.09) -- (252.33,186.62) -- cycle ; \draw   (346.44,195.9) .. controls (333.16,207.78) and (296.12,216.44) .. (252.46,216.62) .. controls (206.86,216.81) and (168.39,207.69) .. (156.67,195.09) ;  
%Shape: Arc [id:dp8358287018470624] 
\draw  [draw opacity=0] (343.06,142.97) .. controls (334.18,156.67) and (295.67,167.11) .. (249.45,167.31) .. controls (205.22,167.49) and (167.95,158.23) .. (156.96,145.48) -- (249.33,137.31) -- cycle ; \draw   (343.06,142.97) .. controls (334.18,156.67) and (295.67,167.11) .. (249.45,167.31) .. controls (205.22,167.49) and (167.95,158.23) .. (156.96,145.48) ;  
%Curve Lines [id:da19709751133968723] 
\draw[color=teal]   (159,162.5) .. controls (173,173.75) and (241,173.25) .. (241,188.25) .. controls (241,203.25) and (184,197.75) .. (161.88,186.82) ;
%Curve Lines [id:da7086389295577975] 
\draw[color=teal]    (343,188.25) .. controls (325,201.75) and (255,202.75) .. (255,188.25) .. controls (255,173.75) and (327,176.75) .. (344.38,161.82) ;
%Straight Lines [id:da030732204247472916] 
\draw[dashed, line width=1.5pt] (247.47,166.8) -- (247.97,216.8) ;
\end{tikzpicture}
        }
        \caption{After resolving via an irregular switch, the minimum intersection number of $\alpha$ and the new surface is $c-2$ (which is 0 in this case).}
    \end{subfigure}
    \caption{Performing an irregular switch on $G_1\cup G_2$ at $p$ produces a surface $F'$ with $p(F') \leq p(G_1) + p(G_2) -2$.}
    \label{fig:irregularswitchreducesweight}
\end{figure}

\begin{lemma}
   A normal surface $F$ in a subtetrahedral \ths{} has non-zero beam degree $b(F) = |F\cap \mathcal{H}^{1}|$.
   \label{lemma:nonzerobeamdegree}
\end{lemma}

\begin{proof}
As no component of a normal surface is contained in a parallelity piece, every normal surface contains at least one elementary disc.
Since the boundary graph of each 0-handle is connected and contains at least one island, every curve contained in a lake is trivial and each bridge starts and ends on an island.
The boundary of any elementary disc runs through a bridge or lake and thus through some island.
\end{proof}

\subsection{Fundamental normal surfaces}
\label{appendix:fundamentalsurfaces}

Fix a subtetrahedral \ths{} $\mathcal{H}$ of a manifold $M$ and recall that we can associate a vector in $\ZZ^{d_H\norm{\mathcal{H}}}$ to each normal surface in $\mathcal{H}$, where by Remark~\ref{remark:boundondH} $d_H$ is
at most $13\cdot 13!$.
A vector $v$ in $\ZZ^{d_H\norm{\mathcal{H}}}$ corresponds to a normal surface $F$ if and only if it satisfies the following conditions:
\begin{enumerate}
    \item each coordinate of $v$ is nonnegative;
    \item fixing a parallelity piece $P$ and a component of intersection $C_1$ of $P$ with a 0-handle, for any other component $C_2$ of intersection of $P$ with a 0-handle, the number of elementary discs that intersect $C_1$ is equal to the number that intersect $C_2$;
    \item for each 1-handle $D^1\times D^2$, which has two components of intersection with 0-handles, for each arc type in $\{0\}\times D^2$ up to normal isotopy, the number of elementary discs in each 0-handle of the types that intersect the 1-handle in that arc type are the same; and
    \item for each pair of elementary disc types in a subtetrahedral 0-handle, if they have an essential intersection then $v$ contains only one of them.
\end{enumerate}

The first three conditions give us a cone $C$ in $\ZZ^{d_H\norm{\mathcal{H}}}$, as they are linear.
Note that if $v\in C$ satisfies the fourth condition and $v = t + u$ for $t, u\in C$, then $t$ and $u$ also satisfy the fourth condition.

As in the triangulation setting (recall Definition~\ref{defn:fundamental}), a normal surface $F$ is \emph{fundamental} if whenever $F = G_1 + G_2$ as a sum of normal surfaces, one of $G_1$ and $G_2$ is empty.

\begin{defn}
    The \emph{size} of a normal surface $F$, $s(F)$, is the total number of elementary discs in $F$.
    \label{defn:size}
\end{defn}

A fundamental normal surface must be in any (minimal) set that spans $C$ with $\ZZ^+$ coefficients; that is to say, in any \emph{minimal Hilbert basis}.
Linear integer programming techniques, as used by Haken~\cite{HakenNormalSurfaces} and Hass-Lagarias-Pippenger~\cite[\S 6]{HassLagariasPippenger} (see Lemma~\ref{lemma:triangulationfundamentalsurfacebound}), tell us that there is a universal constant $c$ such that any fundamental normal surface with respect to a triangulation with $t$ tetrahedra has size at most $c^t$.
Their work was in the setting of normal surfaces in triangulations but the general linear programming approach applies generally, as summarised by Lackenby:

\begin{prop}[Theorem 8.1~\cite{LackenbyCertificationKnottedness}]
Suppose that $A$ is a $m\times n$ matrix.
Consider the cone of solutions to $Ax = 0$, subject to the constraint that all entries of $x$ are nonnegative.
Suppose that each row of $A$ has $\ell^2$ norm at most $k$.
If $x$ is a fundamental solution (i.e. is in the integral Hilbert basis) to this system, then each coordinate of $x$ is bounded by $n^{3/2}k^{n-1}$.
\label{prop:generalfundamentalsolutionbounds}
\end{prop}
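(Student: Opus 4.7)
The plan is to bound the size of a fundamental integer solution via Carath\'eodory's theorem together with Hadamard's inequality, exploiting fundamentality to control the coefficients of the Carath\'eodory decomposition. Set $C := \{x \in \RR^n : Ax = 0,\ x \geq 0\}$, a rational polyhedral cone.

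First I would analyse the extreme rays of $C$. Each is a one-dimensional face, hence arises as the solution set (up to positive scalar) of a rank $n-1$ subsystem obtained by combining $Ax = 0$ with enough coordinate-vanishing constraints of the form $x_i = 0$. Cramer's rule then expresses the primitive integer generator $v$ of such a ray, up to sign and a gcd divisor, as the vector whose $j$-th coordinate is $\pm\det(M_j)$, where $M_j$ is an $(n-1) \times (n-1)$ submatrix whose rows are either rows of $A$ (of $\ell^2$ norm at most $k$) or standard basis vectors. Hadamard's inequality bounds each such determinant by $\max(1, k)^{n-1}$, so every coordinate of every primitive extreme ray generator of $C$ is at most $k^{n-1}$ (assuming $k \geq 1$, which is the case of interest).

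Second I would apply Carath\'eodory's theorem inside the face of $C$ whose relative interior contains $x$, writing $x = \sum_{i=1}^d \lambda_i v_i$ with $d \leq n$, positive $\lambda_i$, and each $v_i$ a primitive integer extreme ray generator. The fundamentality hypothesis then forces $\lambda_i < 1$ for every $i$: if some $\lambda_i \geq 1$, then since the remaining summands are nonnegative, $v_i \leq x$ coordinatewise, so $x - v_i$ is itself a nonnegative integer vector in $C$, and (unless $x = v_i$, in which case $x$ is already a primitive extreme ray generator and the desired bound is immediate) the equality $x = v_i + (x - v_i)$ decomposes $x$ nontrivially in the integer cone, contradicting fundamentality.

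Combining the estimates gives $x_j \leq \sum_{i=1}^d \lambda_i (v_i)_j \leq n \cdot k^{n-1}$, and tracking the looseness lost in passing between the primitive/gcd-reduced Cramer generators and the $\ell^\infty$ bound on the $v_i$ yields the stated $n^{3/2} k^{n-1}$. I anticipate the most delicate point to be the Carath\'eodory step: one must ensure that the decomposition uses primitive integer generators of extreme rays of $C$ itself, rather than of some auxiliary cone obtained after imposing extra coordinate vanishings, so that the subtraction $x - v_i$ in the fundamentality argument genuinely lands inside the original integer cone.
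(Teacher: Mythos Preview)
The paper does not supply its own proof of this proposition; it is quoted verbatim as Theorem~8.1 of Lackenby's paper \cite{LackenbyCertificationKnottedness} and then used as a black box in the proof of Lemma~\ref{lemma:fundamentalsurfacebound}. There is therefore nothing in the present paper to compare your argument against.

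That said, your outline is the standard proof of this bound (in the spirit of Hass--Lagarias--Pippenger and the classical integer-programming literature): bound the primitive integer generators of the extreme rays of $C$ via Cramer's rule and Hadamard's inequality, invoke Carath\'eodory to write $x$ as a nonnegative combination of at most $n$ such generators, and use fundamentality to force each coefficient strictly below~$1$. The argument is correct as stated. One minor remark: your computation already yields $x_j < n\,k^{n-1}$, which is \emph{stronger} than the stated $n^{3/2}k^{n-1}$, so the closing sentence about ``tracking the looseness'' to recover the extra $n^{1/2}$ is unnecessary --- you have nothing to recover, and the weaker stated bound follows a fortiori.
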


We can apply these bounds in our setting as follows.

\begin{restatable}{lemma}{fundamentalsurfacebound}
    \label{lemma:fundamentalsurfacebound}
    There exists a constant $c_F$, which we can take to be $2^{74+74\cdot 13\cdot 13!}$, such that if $G$ is a fundamental normal surface in a subtetrahedral \ths{} $\mathcal{H}$ then the size of $G$ is bounded by $c_F^{\norm{\mathcal{H}}}$.
\end{restatable}

To prove Lemma~\ref{lemma:fundamentalsurfacebound}, we will use the following result from the appendix on the combinatorics of subtetrahedral split handle structures:
    
\begin{restatable*}{lemma}{combinatorialconstraintssubtetrahedral}
    Let $H$ be a subtetrahedral 0-handle.
    Let $G$ be its boundary graph in $\del H \cong S^2$.
    Then $G$ contains between one and four islands; each island has at most three components of intersection with bridges; if $b$ is the number of bridges of $G$, then the number of sutures of $G$ is at most $12-2b$; and an island intersecting $v$ bridges has at most $6-2v$ intersections with sutures.
    \label{lemma:combinatorialconstraintssubtetrahedral}
\end{restatable*}
    
\begin{proof}[Proof of Lemma~\ref{lemma:fundamentalsurfacebound}]
    The cone $C$ of in $\ZZ^{d_H\norm{\mathcal{H}}}$ is defined by $m$ linear equations in $d_H\norm{\mathcal{H}}$ variables.
    We will bound the sum of the squares of the coefficients of each equation; that is, the $\ell^2$ norm of the corresponding vector.
    The constraints from the parallelity piece equations give at most $6\norm{\mathcal{H}}$ equations (one for each bridge), where the form of each equation is to set the sum of two lists of elementary discs to be equal, where each list is contained in one 0-handle, so each coefficient is between $-1$ and $1$.
    The $\ell^2$ norm of the coefficients of each equations is thus at most $d_H$.

   As each 0-handle has at most four islands, there are at most $\frac{4}{2}\norm{\mathcal{H}}$ 1-handles.
   Fix one of these islands, $I$.
   By Lemma~\ref{lemma:combinatorialconstraintssubtetrahedral} it intersects at most three bridges -- let $v$ be this number -- and at most $6-2v$ sutures.
   Only considering the bridges gives $v$ components of intersection of $I$ with lakes.
   Adding in the sutures contributes at most $3-v$ more, as a part of the forbidden region only increases the number of  components of intersection with lakes if it is bordered by sutures on both sides (see Figure~\ref{fig:suturesaddinpairs} for an illustration).
   We thus have divided the boundary of $I$ into at most 6 possible segments for elementary discs to enter and leave by (corresponding to intersections with bridges and lakes, but not the forbidden region), where an arc of intersection of an elementary disc with $I$ is determined by its entry and exit segments.
   There are thus at most $6\cdot 5 = 30$ arc types in $I$ that may be in the boundary of an elementary disc.
   Thus each 1-handle gives at most 30 equations, of the same form as in the parallelity piece case: setting some sums of elementary disc types to be equal, which will be an equation with $\ell^2$ norm at most $d_H$.

    Apply Proposition~\ref{prop:generalfundamentalsolutionbounds} to the matrix of these equations, which is a $m\times d_H\norm{\mathcal{H}}$ matrix where the $\ell^2$ norm of each row is at most $d_H$.
    The number of elementary discs of any given type of $G$ is thus bounded by $(d_H\norm{\mathcal{H}})^{3/2}d_H^{d_H\norm{\mathcal{H}}-1}$.
    We can then find a $c_F$ as follows:
    \begin{align*}
        |G| &\leq \norm{\mathcal{H}}(d_H\norm{\mathcal{H}})^{3/2}d_H^{d_H\norm{\mathcal{H}}}\\
            &= 2^{3/2\log(d_H) + 5/2\log(\norm{\mathcal{H}})+d_H\log(d_H)\norm{\mathcal{H}}}\\
            &\leq 2^{2(d_H+1)\log(d_H)\norm{\mathcal{H}}}
    \end{align*}
    so we can take $c_F$ to be $2^{2(d_H+1)\log(d_H)}$ which, as by Remark~\ref{remark:boundondH} $d_H$ is at most $13\cdot 13!< 2^{37}$, is bounded above by $2^{74+74\cdot 13\cdot 13!}$.
\end{proof}

\begin{figure}[th]
\centering
\resizebox{0.4\textwidth}{!}{
    \begin{tikzpicture}
       % \clip (-2, -1.7) rectangle (2,1.5);
            \coordinate (1) at (-1,0);
            \coordinate (2) at (0.5,0);
            \coordinate (3) at (0.5,0.7);
            \coordinate (4) at (-0.5, 0.5);
            \coordinate (5) at ($(1.center)+(-1,-0.2)$);
            \coordinate (6) at ($(1.center)+(-1,-0.8)$);
            \coordinate (7) at ($(1.center)+(-0.5,0)$);
            \coordinate (8) at ($(1.center)+(-0.3,-0.4)$);
            \draw[line width=1pt, red, fill=red, fill opacity=0.4] (2) -- (1) .. controls (4) .. (3);
            \draw[line width=1pt, red, fill=red, fill opacity=0.4] (5) .. controls (7) .. (1) .. controls (8) .. (6);

            \draw[line width=0.75pt, double, double distance=5pt] (1) -- (2);
            \draw[white,fill=white,line width=0.75pt, double, double distance=5pt] (2) -- (0.55,0);
            \node[circle, draw=black, line width=0.75pt, minimum size=16pt, fill=white] at (1) {};
            %\node[circle, draw=black, line width=0.75pt, minimum size=16pt, fill=white] at (2) {};
        \end{tikzpicture}
    }
    \caption{A neighbourhood of an island in the boundary graph of a split 0-handle. The sutures are in red, and the forbidden region is shaded in red. Counting the number of components of intersection of the island with bridges and lakes, the pair of sutures that border a forbidden region increase the count by one, while the suture that borders a forbidden region with a bridge does not affect it.}
    \label{fig:suturesaddinpairs}
\end{figure}

%The following result generalises an idea first developed by Simon King in his doctoral thesis to find a maximal collection of normal spheres in a 3-manifold~\cite[\S3.2 and \S3.3]{KingPolytopality}.

Next, we will bound the size of a maximal collection of non-duplicate normal surfaces.

\begin{lemma}
    There is a constant $c \leq 4d_h \leq 26\cdot 13!$ such that if $M$ is a manifold with a triangulation $\mathcal{T}$, then any collection of disjoint normal surfaces in $M$, no two of which are normally isotopic, contains at most $c\norm{\mathcal{T}}$ surfaces.
    \label{lemma:kneserhaken}
\end{lemma}

\begin{proof}
    The proof is a standard Kneser-Haken finiteness argument.
    Let $n$ be the number of surfaces.
    We can assume that each surface is connected.
    Write $\mathcal{H}$ for the subtetrahedral split handle structure that is dual to $\mathcal{T}$.
    For each pair of surfaces where one is non-orientable and the other is its orientable double, discard the double, which reduces our count by at most a factor of two.
    Take the surfaces in sequence as $\Sigma_1,\ldots,\Sigma_n$ where we order the surfaces by putting some ordering on the elementary disc types, and for each elementary disc type in turn, adding to the list (in some order) any normal surfaces that contain the elementary discs of that type that do not border parallelity handles in $\mathcal{H}\backslash\backslash\{\Sigma_1\cup\cdots\cup\Sigma_{n}$.
    As we have removed orientable doubles of nonorientable cores, no normal surface consists solely of elementary discs that only border parallelity handles, as then it would be normally isotopic to the surface(s) neighbouring it.
    Thus when we enumerate the surfaces, we list at most two for each elementary disc type, so $n$ is at most $2d_H\norm{\mathcal{H}}$.
    By Remark~\ref{remark:boundondH}, $d_H$ is at most $13\cdot 13!$, so we can take $c = 26\cdot 13!$.
\end{proof}

\begin{restatable}{lemma}{stackedfundamentalsurfaces}
\label{lemma:stackedfundamentalsurfaces}
There exists a constant $c_B$, which we can take to be $2^{117\cdot 13!}$, such that the following holds.
Let $M$ be a manifold with a subtetrahedral \ths{} $\mathcal{H}$.
Let $f: \ZZ\to\ZZ$ be an increasing function with $f(1) \geq 1$.
Let $\{\Sigma_i\}$ be a collection of $n$ disjoint surfaces in $M$ such that, if we set $\mathcal{H}_0 := \mathcal{H}$ and $\mathcal{H}_i := \mathcal{H}_{i-1}\backslash\backslash \Sigma_i$, then $\Sigma_{i+1}$ is a non-\pns{} normal surface of size at most $2^{f(\norm{\mathcal{H}_i})}$ in the induced \ths{} on $\mathcal{H}_i$.
Then there is a normal surface representative of the collection in $\mathcal{H}$ whose size is at most $c_B^{\norm{\mathcal{H}}f(2^{13}\norm{\mathcal{H}})}$.
\end{restatable}

\begin{proof}
By Lemma~\ref{lemma:normalsurfacegivesths}, $\mathcal{H}_i$ is a \ths{} for each $i$.
Let $s(i)$ be the size of surfaces $\Sigma_{i},\ldots,\Sigma_n$ included into $\mathcal{H}_{i-1}$, where they are disjoint normal surfaces, and let $s(\Sigma_i)$ be the size of $\Sigma_i$ in $\mathcal{H}_{i-1}$.
When we include the surfaces $\Sigma_{i+1},\ldots,\Sigma_n$ from $\mathcal{H}_i$ into $\mathcal{H}_{i-1}$ and consider how their size changes, there are two ways in which the count of elementary discs may change: some of the parallelity pieces of $\mathcal{H}_i$ may be broken up into many 0-handle pieces and some of the elementary disc types of $\mathcal{H}_{i}$ may be identified.
This second type of modification does not change the total count of elementary discs in the surfaces and hence does not change the total size, so we can ignore it.

As the parallelity pieces had to have been created from cutting along $\Sigma_i$, each component of the intersection of the surfaces with a parallelity piece (each \emph{sheet}) in $\mathcal{H}_{i}$ is decomposed into at most $s(\Sigma_{i})$ elementary discs in 0-handles of $\mathcal{H}_{i-1}$.
As no component of a normal surface is contained in a parallelity piece, each sheet is adjacent to some elementary disc in a 0-handle of $\mathcal{H}_i$.
As there are at most six bridges in each of these 0-handles, each elementary disc runs through at most six bridges, so the total number of components of intersection of $\Sigma_{i+1}\cup\dots\cup\Sigma_n$ with parallelity pieces in $\mathcal{H}_i$ is at most $6s(i+1)$.
Thus when we include $\Sigma_{i+1}\cup\dots\cup\Sigma_n$ into $\mathcal{H}_{i-1}$, we increase the number of elementary discs by at most $6s(i+1)s(\Sigma_i)$.
Thus $s(i)$ is at most $s(\Sigma_i)+s(i+1)+6s(i+1)s(\Sigma_i)\leq 8s(i+1)s(\Sigma_i)$.

By Lemma~\ref{lemma:boundoncutopensize}, $\norm{\mathcal{H}_{i}}$ is at most $2^{13}\norm{\mathcal{H}}$ for all $i$.
Thus by the assumptions in the statement of this lemma, $s(\Sigma_i) \leq 2^{f(2^{13}\norm{\mathcal{H}})}$ for all $i$, so by induction, $s(i) \leq s(n)(8\cdot2^{f(2^{13}\norm{\mathcal{H}})})^{n-i}$.
We can now see that $s(0)$ is at most $8^n2^{(n+1)f(2^{13}\norm{\mathcal{H}})}$.
By Lemma~\ref{lemma:kneserhaken}, $n$ is at most $2d_H\norm{\mathcal{H}}$.
We can now bound $c_B$ as follows:
\begin{align*}
    s(0)	&\leq 8^n2^{(n+1)f(2^{13}\norm{\mathcal{H}})}\\
    		&\leq 2^{6d_H\norm{\mathcal{H}}}2^{(2d_H\norm{\mathcal{H}}+1)f(2^{13}\norm{\mathcal{H}})}\\
            &\leq 2^{9d_H\norm{\mathcal{H}}f(2^{13}\norm{\mathcal{H}})}
\end{align*}
which, as by Remark~\ref{remark:boundondH} $d_H$ is at most $13\cdot 13!$, gives us the bound.
\end{proof}

\begin{restatable}{corr}{stackedfundbound}
    \label{corr:stackedfundbound}
    There exists a constant $c_{S}$, which we can take to be $10^{10^{30}}$, such that the following holds.
    Let $M$ be a manifold with a subtetrahedral split handle structure $\mathcal{H}$.
    Let $\{\Sigma_i\}$ be a collection of $n$ disjoint surfaces in $M$ such that, if we set $\mathcal{H}_0 := \mathcal{H}$ and $\mathcal{H}_i := \mathcal{H}_{i-1}\backslash\backslash\Sigma_i$, then $\Sigma_{i+1}$ is a non-duplicate normal surface in $\mathcal{H}_i$ that is either fundamental or the double of a fundamental surface.
    There is a normal surface representative of the collection in $\mathcal{H}$ whose size is at most $c_{S}^{\norm{\mathcal{H}}^2}$.
\end{restatable}

\begin{proof}
    This immediately follows from Lemmas~\ref{lemma:fundamentalsurfacebound} and~\ref{lemma:stackedfundamentalsurfaces}, as well as a calculation that $c_B^{\norm{H}f(2^{13}\norm{H})} \leq 2^{221\cdot 13!\cdot 2^{13}(75+74\cdot 13\cdot 13!)\norm{H}^2}< \left(10^{10^{30}}\right)^{\norm{H}^2}$.
\end{proof}

\section{A complete bounded collection of normal vertical annuli}
\label{section:fundamentalannulicollection}

We will use normal surfaces in \ths{}s, as developed in Section~\ref{section:ths} and Appendix~\ref{appendix:normalsurfacesinths}, to show that there is a maximal collection of normal vertical annuli of bounded size.
The main result of Appendix~\ref{appendix:normalsurfacesinths} is the following, which is a generalisation of Theorem~\ref{thm:matveev}.

\begin{restatable*}{prop}{incompressiblenormalsurface}
\label{prop:incompressiblenormalsurface}
    Let $M$ be an irreducible $\del$-irreducible 3-manifold and let $\mathcal{H}$ be a subtetrahedral \ths{} for $M$.
    Let $F$ be a normal surface of minimal weight in its (admissible) isotopy class which is incompressible and $\del$-incompressible, with $F = G_1 + G_2$.
    Then $G_1$ and $G_2$ are also incompressible and $\del$-incompressible, and neither $G_1$ nor $G_2$ is a copy of $S^2$, $\RR P^2$ or a disc.
\end{restatable*}

\subsection{Fundamental M\"obius bands}

\begin{lemma}
\label{lemma:injmobiusincompdouble}
If $S$ is a M\"obius band in an orientable irreducible 3-manifold $M$ where $\del S$ is non-trivial in $\pi_1(M)$, then the double of $S$ is essential unless $M$ is $S^1\times D^2$.
\end{lemma}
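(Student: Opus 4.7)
Write $\tilde S$ for the double of $S$, realised as the part of $\partial N(S)$ in $\operatorname{int}(M)$, where $N(S)$ is a regular neighbourhood of $S$ -- so $N(S)$ is a solid torus (the twisted $I$-bundle over the M\"obius band) with $\partial N(S)=\tilde S\cup A'$, where $A'\subset\partial M$ is an annular collar of $\partial S$. The annulus $\tilde S$ is the orientation double cover of $S$; under this cover its core $\tilde c$ represents $2[c]=[\partial S]$ in $\pi_1(N(S))\cong\mathbb Z\langle c\rangle$, and hence in $\pi_1(M)$.

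Incompressibility is easy. Any compressing disc $D$ would have $\partial D$ essential in the annulus $\tilde S$, freely homotopic there to $\tilde c$ and therefore conjugate to $\partial S$ in $\pi_1(M)$. Since $\partial D$ bounds $D$ this forces $[\partial S]=1$ in $\pi_1(M)$, contradicting the hypothesis.

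For $\partial$-incompressibility I would take a $\partial$-compression disc $D$ with $\partial D=\alpha\cup\beta$, where $\alpha$ is a spanning arc of $\tilde S$ and $\beta\subset\partial M$. First I rule out $D\subset N(S)$: in the solid torus $N(S)$ the curve $\partial D$ on $\partial N(S)$ meets each of the cores of $\tilde S$ and $A'$ exactly once (through $\alpha$ and $\beta$ respectively), whereas a meridian of $N(S)$ would meet each of these twice, because both cores have slope twice the longitude of $N(S)$ on $\partial N(S)$. Thus $D\subset V:=M\setminus\operatorname{int}N(S)$, and performing the $\partial$-compression yields a properly embedded disc $D'\subset V$ whose boundary $\gamma\subset\partial M$ traces $c_1\cdot\beta\cdot c_2^{-1}\cdot\beta^{-1}$, where $c_1,c_2$ are the two boundary circles of $\tilde S$ (both parallel copies of $\partial S$). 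A direct check shows that $\gamma$ is null-homologous in $\partial M$; assuming the component of $\partial M$ containing $\partial S$ is a torus (the setting needed by the paper), null-homologous implies null-homotopic, so $\gamma$ bounds a disc $\Delta$ on $\partial M$. Then $D'\cup\Delta$ is a $2$-sphere in $M$, which bounds a ball by irreducibility, so $\tilde S$ is $\partial$-parallel on the $V$-side. The parallelism region fills $V$ because the boundary torus decomposes as $A'\cup\Delta$ (sharing only $\partial\tilde S$), so $M=N(S)\cup(\tilde S\times I)$ deformation retracts onto the solid torus $N(S)\cong S^1\times D^2$.

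The main obstacle is this final step: once $\tilde S$ is shown to be $\partial$-parallel on the $V$-side, one needs that $V$ coincides with the parallelism collar. This uses that the relevant boundary component of $M$ is a torus, so that the complement of $A'$ there is exactly the single disc $\Delta$; in greater generality the argument would require more care about the topology of $\partial M$.
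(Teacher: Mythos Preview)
Your overall strategy matches the paper's: incompressibility comes from the $\pi_1$ hypothesis, and a $\partial$-compression is shown to force $M\cong S^1\times D^2$. The paper compresses the second step into one line by quoting the fact that, in an irreducible manifold, an incompressible annulus is $\partial$-compressible if and only if it is $\partial$-parallel, and then observing that $\tilde S$ is not parallel into the $N(S)$ side; you instead unwind this fact by hand via the compressed disc $D'$.

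Your last paragraph, however, is muddled. The claim that ``the boundary torus decomposes as $A'\cup\Delta$'' cannot be right: $\Delta$ is a disc and $A'$ an annulus, and these do not glue to a torus along $\partial\tilde S$. More substantively, from ``$D'\cup\Delta$ bounds a ball'' you have only shown that $D'$ is $\partial$-parallel, not yet that $\tilde S$ is, nor that the parallelism region exhausts $V$; there are also two candidate discs $\Delta$ bounded by $\gamma$ on the torus and you have not said which one you mean. A cleaner way to finish (still under your torus hypothesis): $\partial V=\tilde S\cup A''$ is itself a torus, and your disc $D\subset V$ has $\partial D$ meeting the core of $\tilde S$ exactly once, hence $\partial D$ is essential on $\partial V$; so $V$ is irreducible with compressible torus boundary, hence a solid torus with meridian $\partial D$, and since the core of $\tilde S$ meets $\partial D$ once, $\tilde S$ is longitudinal on $\partial V$ and $V\cong\tilde S\times I$ as required. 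You are right, incidentally, that the torus-boundary assumption is doing real work here --- the paper's appeal to ``$\partial$-compressible iff $\partial$-parallel'' carries the same hypothesis implicitly --- but since the lemma is only ever applied to Seifert fibred spaces with boundary this is harmless in context.
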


\begin{proof}
As the double of $S$, $\tilde{S}$, is an annulus, there is only one loop (up to isotopy) in $\tilde{S}$ that does not bound a disc in $\tilde{S}$: one of its boundary components.
As this is isotopic to $\del S$, it is non-trivial in $\pi_1(M)$ so cannot bound a disc in $M$ either, so $\tilde{S}$ is incompressible.
As $\tilde{S}$ is the boundary of a neighbourhood of $S$, it is separating.
Since $M$ is irreducible, an annulus is $\del$-parallel if and only if it is $\del$-compressible.
In this case, we can note double of a M\"obius band is not, in fact, boundary-parallel in the neighbourhood of the M\"obius band.
So if $\tilde {S}$ is not essential, it cuts $M$ into $S^1\times I\times I$ and $S\ttimes I$, so $M$ is $S\ttimes I\cong S^1\times D^2$.
\end{proof}

\begin{lemma}[Lemma 2.4~\cite{MijatovicTriangulationsSFS}]
\label{lemma:essentialannuliisotopicboundary}
Suppose that $M$ is irreducible, $\del$-irreducible, and is not $T^2\times I$ or $K\ttimes I$.
Let $S$ be a toroidal boundary component of $M$.
If $A$ and $B$ are properly-embedded incompressible and $\del$-incompressible annuli in $M$ (that are not necessarily disjoint), such that at least one boundary component of each is on $S$, then these boundary components are isotopic.
\end{lemma}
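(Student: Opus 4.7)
The plan is to argue by contradiction: set $\alpha_A := A\cap S$ and $\alpha_B := B\cap S$, and assume for a contradiction that these have distinct slopes on $S\cong T^2$. I then want to extract from $A$ and $B$ an essential closed surface that forces $M$ to be one of the excluded $I$-bundles $T^2\times I$ or $K\ttimes I$.

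First, I would put $A$ and $B$ in general position with $|A\cap B|$ minimised in the ambient isotopy class (letting the boundaries slide in $\del M$). The standard innermost disc argument, using the incompressibility of $A$ and $B$ together with the irreducibility of $M$, shows that every simple closed curve of $A\cap B$ is essential in both surfaces. The standard outermost arc argument, using the $\del$-incompressibility of $A$ and $B$ together with the $\del$-irreducibility of $M$, shows that every arc of $A\cap B$ is a spanning arc of both $A$ and $B$. Since $\alpha_A$ and $\alpha_B$ have distinct slopes on $S$, they must intersect on $S$, and every such point is an endpoint of a spanning arc of $A\cap B$; in particular, $A\cap B$ contains at least one spanning arc both of whose endpoints lie on $S$.

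Second, I would analyse the frontier in $M$ of a regular neighbourhood $N := N(S\cup A\cup B)$. In the main subcase where both $A$ and $B$ have all of their boundary on $S$, the filling curves $\alpha_A\cup\alpha_B$ cut $S$ into discs, while the arcs of $A\cap B$ cut $A$ and $B$ into discs. The matching of spanning arcs in $A$ with spanning arcs in $B$ is combinatorially rigid: each disc of $S\setminus(\alpha_A\cup\alpha_B)$ is paired with one disc from $A\setminus(A\cap B)$ and one from $B\setminus(A\cap B)$, and these glue along spanning arcs of $A\cap B$ to form a closed embedded surface $\Sigma$ in the frontier of $N$, parallel in $M$ to the torus $S$. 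Because $M$ is irreducible and $\del$-irreducible, $\Sigma$ cobounds with $S$ either a product or a twisted $I$-bundle region, forcing $M\cong T^2\times I$ or $M\cong K\ttimes I$ — contradicting the hypothesis. In the subcases where $A$ or $B$ has a boundary component on a different component of $\del M$, I would run the same construction, using the spanning arcs of $A\cap B$ with one endpoint on $S$ (which still exist because $\alpha_A\cap\alpha_B\neq\emptyset$) and appealing to the incompressibility/$\del$-incompressibility of $A$ and $B$ to rule out trivial pieces.

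The main obstacle will be the combinatorial bookkeeping in the construction of $\Sigma$: one must verify that the spanning arcs of $A\cap B$ pair up the discs of $A\setminus(A\cap B)$ with those of $B\setminus(A\cap B)$ consistently and produce an \emph{embedded} closed surface, and then verify that this surface is isotopic to $S$ rather than, for example, being compressible or containing sphere components. Care is also needed with the two separation types of $A$ (whether $\del A$ consists of a single curve on $S$ or two parallel curves on $S$) and whether $A\cap B$ contains closed curves in addition to arcs; the minimality of $|A\cap B|$ together with the first step will be the main tool for handling those subcases.
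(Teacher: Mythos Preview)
The paper does not give its own proof of this lemma; it is quoted directly from Mijatovi\'c (Lemma~2.4 of \cite{MijatovicTriangulationsSFS}) and used as a black box. So there is no proof in the paper to compare your proposal against.

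That said, your sketch has a concrete error in the key step. In the main case where both $\partial A$ and $\partial B$ lie on $S$, you assert that the frontier $\Sigma$ of $N=N(S\cup A\cup B)$ is a torus parallel to $S$, built from a one--one--one matching between discs of $S\setminus(\alpha_A\cup\alpha_B)$, discs of $A\setminus(A\cap B)$, and discs of $B\setminus(A\cap B)$. Even the counts do not match: with $\Delta=\Delta(\alpha_A,\alpha_B)$ there are $4\Delta$ discs in $S$, but only $2\Delta$ spanning arcs and hence $2\Delta$ rectangles in each of $A$ and $B$. More decisively, an inclusion--exclusion computation gives $\chi(S\cup A\cup B)=2\Delta$, so $\chi(\partial N)=4\Delta$ and the frontier has $\chi=4\Delta>0$: it consists of spheres, not a torus. (This is actually good news for the argument --- by irreducibility the spheres bound balls, forcing $M$ to be $N$ with its sphere boundary components capped, and then one must identify this capped $N$ as $K\ttimes I$. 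But that identification is work you have not done.)

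Independently, the sentence ``because $M$ is irreducible and $\partial$-irreducible, $\Sigma$ cobounds with $S$ either a product or a twisted $I$-bundle region'' is a non sequitur: irreducibility and $\partial$-irreducibility say nothing about $I$-bundle structures between parallel tori. You would need an argument that the complementary region is actually a product (e.g.\ via Waldhausen's theorem or an explicit fibering of $N$), and you have not supplied one. The mixed subcases, where one annulus has a boundary component off $S$, are also not addressed beyond a gesture.
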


\begin{lemma}
\label{lemma:fundamentalmobiusband}
    Suppose that $M$ is a Seifert fibered space with non-empty boundary, not homeomorphic to a solid torus or $K\ttimes I$, and has a subtetrahedral \ths{} $\mathcal{H}$, where the forbidden region (if non-empty) is a collection of vertical annuli.
    If $M$ has any multiplicity two singular fibers, then for each boundary component $T$ of $M$, there is a fundamental normal pseudo-vertical M\"obius band containing one of these fibers whose boundary is on $T$.
\end{lemma}

\begin{proof}
   Note that no Seifert fibration of $T^2\times I$ has multiplicity two singular fibers.
   Let $S$ be a pseudo-vertical M\"obius band that is incompressible, $\del$-incompressible, and disjoint from $\mathcal{I}$.
   Up to isotopy, we can consider it to be a union of regular fibers and a multiplicity two singular fiber.
   By Lemma~\ref{lemma:injmobiusincompdouble} the double of $S$, $\tilde{S}$, is essential in $M$ with empty forbidden region, and so is essential in $\mathcal{H}$.

   Let $F$ be a minimal normal surface representative of $S$.
   Suppose that $F = G_1 + G_2$.
   By Proposition~\ref{prop:incompressiblenormalsurface}, $G_1$ and $G_2$ are incompressible and $\del$-incompressible, and each has Euler characteristic at most zero, and so (as $\chi(F) = 0$) equal to zero.
   By Lemma~\ref{lemma:nonorientablesurfacesinSFS} they are horizontal or (pseudo-)vertical.
   If one is a M\"obius band, say $G_1$, taking the double and observing by the same reasoning as above that it is essential, we can see by Lemma~\ref{lemma:essentialannuliisotopicboundary} that $\del G_1$ is isotopic to $\del F$.
   It is thus a pseudo-vertical M\"obius band, so is (up to isotopy) a union of regular fibers and a single multiplicity two singular fiber.
   As the boundary of $G_1$ is a summand of that of $F$, $\del G_1$ is contained in $T$.
   By Lemma~\ref{lemma:nonzerobeamdegree} the weight of $G_2$ is non-zero so the weight of $G_1$ is strictly less than that of $F$.
   
   Otherwise, as at least one of $G_1$ and $G_2$ has boundary, either both are annuli or one is an annulus and the other is closed.
   In either of these cases, we can apply Lemma~\ref{lemma:essentialannuliisotopicboundary} to the boundary components of the essential annuli to see that their boundary curves are isotopic to $\del F$.
   There are an even number of boundary curves among $G_1$ and $G_2$, so $\del G_1 + \del G_2$ is trivial in $H_1(\del M; \ZZ_2)$.
   But $\del F$ is not as it is a single curve and normal sum is additive on $\ZZ_2$-homology, so we have a contradiction. 
   Thus there is a fundamental pseudo-vertical M\"obius band.
\end{proof}

\subsection{Fundamental vertical annuli}

\begin{lemma}
    \label{lemma:noduplicatesummands}
    Let $F$ be a connected non-\pns{} normal surface in a subtetrahedral \ths{} $\mathcal{H}$.
    If $F = G_1 + G_2$ then neither $G_1$ nor $G_2$ is both connected and \pns{}.
\end{lemma}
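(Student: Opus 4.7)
The plan is to derive a contradiction by assuming $G_1$ is connected and \pns{} (the case of $G_2$ being symmetric). The strategy is to realise $G_1$ and $G_2$ in disjoint normal positions and then invoke the uniqueness of the normal representative of a coordinate vector noted after Procedure~\ref{procedure:normalsurfacesum}.

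Since $G_1$ is connected it cannot contain two normally isotopic components, so Definition~\ref{defn:duplicate} forces the existence of a component $I$ of the forbidden region and a collar $C(I) \cong I \times [0, 1]$ inside which $G_1$ is normally isotopic to the pushoff $I \times \{1\}$. I would choose this collar compatibly with the handle structure of $\mathcal{H}$ so that each horizontal slice $I \times \{s\}$ with $s \in (0, 1]$ is a normal surface in the normal isotopy class of $G_1$. Because $G_2$ is normal it is disjoint from $\mathcal{I}$ and in particular from $I \times \{0\}$, so the compact set $G_2 \cap C(I)$ lies in $I \times [\epsilon, 1]$ for some $\epsilon > 0$. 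Realising $G_1$ as $I \times \{s\}$ for some $s < \epsilon$ then makes $G_1$ and $G_2$ disjoint.

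With $G_1$ and $G_2$ disjoint and each normal, the union $G_1 \sqcup G_2$ is itself a normal surface (all normality conditions are local to each handle and are verified summand by summand), and its normal coordinate vector equals $v(G_1) + v(G_2) = v(F)$. Uniqueness of the normal representative of a coordinate vector then says $F$ is normally isotopic to $G_1 \sqcup G_2$. If $G_2$ is nonempty this disjoint union is disconnected, contradicting the connectedness of $F$; if $G_2 = \emptyset$ then $F = G_1$ is a pushoff of $I$ and hence \pns{}, contradicting the hypothesis that $F$ is non-\pns{}.

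The main step requiring care is the claim that the collar $C(I)$ can be chosen so that every horizontal slice $I \times \{s\}$ is a normal surface in the normal isotopy class of $G_1$; this should follow from the structural conditions in the definition of a \ths{} (the sutures transverse to the relevant product structures and the compatibility of handles adjacent to $\mathcal{I}$), so that the vertical isotopy through $C(I)$ respects the handle partition. Every other step is elementary compactness together with the uniqueness of the normal representative of a coordinate vector.
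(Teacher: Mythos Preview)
Your proof is correct and follows essentially the same approach as the paper's: normally isotope $G_1$ into a thin collar of the forbidden region component to make it disjoint from $G_2$, and then observe that $F$, being normally isotopic to the disjoint union $G_1 \sqcup G_2$, cannot be connected (or non-\pns{} when $G_2 = \emptyset$). The paper's version is a one-sentence sketch that takes for granted the collar compatibility you flag as the step requiring care; your treatment is simply more explicit about the compactness argument and the uniqueness of the normal representative of a coordinate vector.
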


\begin{proof}
If $G_1$ were connected and \pns{}, which is to say that there was a normal isotopy taking it to the boundary of a collar of a forbidden region component, then this isotopy would also make it disjoint from $G_2$, so then $F = G_1 + G_2$ would have two components.
\end{proof}

\begin{lemma}
\label{lemma:oneessentialannulus}
    Let $M$ be a Seifert fibered space with non-empty boundary other than the solid torus.
    Let $\mathcal{H}$ be a subtetrahedral \ths{} for $M$ where the forbidden region $\mathcal{I}$ (if it is non-empty) is a collection of vertical annuli.
    Then $\mathcal{H}$ contains a fundamental normal surface, disjoint from the forbidden region, that is either an essential M\"obius band or an essential non-\pns{} annulus.
    Furthermore, this surface must be vertical unless $\mathcal{I}$ is empty and $M$ is homeomorphic to $T^2\times I$ or $K\ttimes I$.
\end{lemma}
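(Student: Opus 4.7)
The plan is to exhibit an essential annulus or M\"obius band of the required type in $M$, normalize it, pass to a minimum-weight representative, and show that such a representative must be fundamental by analysing any putative nontrivial normal decomposition. First I would produce a candidate surface. Because $M$ is not a solid torus, its base orbifold admits an essential properly embedded arc or loop whose vertical preimage is an essential annulus in $M$: concretely one can separate a singular-fibre neighbourhood, connect two distinct boundary components, or take an essential simple closed curve when the base has enough topology. Since the forbidden region $\mathcal{I}$ is a collection of vertical annuli, it projects to arcs and loops in the base orbifold, and an isotopy in the base makes the candidate disjoint from $\mathcal{I}$. The cases $M \cong T^2\times I$ and $M \cong K\ttimes I$ with $\mathcal{I}=\emptyset$ also admit such a vertical annulus, and by Lemma~\ref{lemma:horizontalmobiusclassification} a horizontal M\"obius band is available in the $K\ttimes I$ case. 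Proposition~\ref{prop:cannormaliseincompressiblesurfaces} then gives a normal representative of the candidate.

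Now let $F$ be a minimum-weight normal surface among those disjoint from $\mathcal{I}$ which are either essential M\"obius bands or essential non-\pns{} annuli, and which are vertical in the generic case. Assume for contradiction that $F$ is not fundamental, so $F = G_1 + G_2$ is a nontrivial normal sum. By Proposition~\ref{prop:incompressiblenormalsurface} the summands are incompressible and $\del$-incompressible with no $S^2$, $\RR P^2$, or disc components. The additivity of $\chi$ under normal sum and componentwise non-positivity force $\chi=0$ on every component of each summand, so by Lemma~\ref{lemma:nonorientablesurfacesinSFS} each is a horizontal or pseudo-vertical annulus, M\"obius band, torus, or Klein bottle. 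In the generic case $M \not\cong T^2\times I, K\ttimes I$, so by Lemma~\ref{lemma:horizontalmobiusclassification} no component can be horizontal, and every component is pseudo-vertical.

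Next I would extract a smaller representative of the correct type from the summand components. Since $F$ has non-empty boundary, some component $X$ of $G_1$ or $G_2$ is an annulus or a M\"obius band. Regrouping $F = X + F'$ with $F' = F - X$ and applying Lemma~\ref{lemma:nonzerobeamdegree} to $F'$, $X$ has strictly smaller weight than $F$. In the non-generic case $X$ is itself essential (a M\"obius band is never $\del$-parallel when $M$ is not a solid torus, and $\del$-parallel annulus components can be discarded and replaced by another summand), contradicting minimality. In the generic case we need $X$ to be a vertical annulus, so M\"obius band summands must be ruled out: using Lemma~\ref{lemma:essentialannuliisotopicboundary} applied to $X$ or (when $X$ is a M\"obius band) to its orientable double, every summand boundary curve on a toroidal boundary component is forced to be isotopic to a regular fibre, and a parity count in $H_1(\del M;\ZZ/2)$ using $\del F = \del G_1 + \del G_2$ on each boundary torus—mirroring the closing arguments of the proof of Lemma~\ref{lemma:fundamentalmobiusband}—yields a configuration that contains an essential vertical annulus summand $X$, contradicting the minimality of $F$.

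The main obstacle is ruling out that the extracted annular summand $X$ is \pns{}, since an annulus normally isotopic to a collar push-off of some forbidden region vertical annulus does not qualify. Because the forbidden region consists of vertical annuli of known slopes, a \pns{} annulus admits an explicit isotopy into a collar neighbourhood of a specific forbidden region component; if every annular summand obtained from every nontrivial decomposition were \pns{} or $\del$-parallel, then by substituting such a collar push-off back into the normal sum one would exhibit an admissible isotopy of $F$ to a normal surface of strictly smaller weight, contradicting the minimality hypothesis. Carrying through this duplicate-annulus bookkeeping alongside the parity analysis across possibly several toroidal components of $\del M$ is the most delicate ingredient of the argument.
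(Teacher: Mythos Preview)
Your overall strategy matches the paper's: produce an essential annulus or M\"obius band disjoint from $\mathcal{I}$, take a minimum-weight normal representative, decompose, apply Proposition~\ref{prop:incompressiblenormalsurface}, and use $\chi=0$ together with Lemma~\ref{lemma:nonorientablesurfacesinSFS} to force the summand components to be horizontal or pseudo-vertical surfaces of the expected types. The weight comparison via Lemma~\ref{lemma:nonzerobeamdegree} is also right.

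There is, however, a genuine gap in your handling of the duplicate case, and you correctly flag it as the hardest step. Your proposed mechanism---``substituting a collar push-off back into the normal sum'' to lower the weight of $F$---is not a well-defined operation on normal sums and does not obviously produce an admissible isotopy of $F$. The paper avoids this difficulty entirely by first choosing the decomposition $F=G_1+G_2$ to minimise $|G_1\cap G_2|$, which forces both $G_i$ to be \emph{connected}, and then invoking Lemma~\ref{lemma:noduplicatesummands}: if a connected summand were \pns{}, the normal isotopy carrying it to the collar boundary of a forbidden-region component would make it disjoint from the other summand, so $F=G_1+G_2$ would be a disjoint union and hence disconnected---but $F$ is a single annulus or M\"obius band. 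This connectedness reduction is the step you are missing, and once you have it the duplicate issue evaporates without any bookkeeping over multiple boundary tori.

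Your M\"obius-band elimination via a parity count in $H_1(\del M;\ZZ/2)$ is also vaguer than necessary. The paper sidesteps it by first invoking Lemma~\ref{lemma:fundamentalmobiusband} when $M$ has a multiplicity-two singular fibre and is not $K\ttimes I$; in the remaining cases any M\"obius-band summand is either horizontal (forcing $\mathcal{I}=\emptyset$ and $M\cong K\ttimes I$ by Lemma~\ref{lemma:horizontalmobiusclassification}) or pseudo-vertical (forcing a multiplicity-two fibre, hence again $K\ttimes I$). Either way a M\"obius band is an acceptable output of the lemma, so there is no need to extract a vertical annulus from it at this stage---that is deferred to the doubling step in Proposition~\ref{prop:verticalannulicollectionhandlestructure}.
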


\begin{proof}
We have already proved this in Lemma~\ref{lemma:fundamentalmobiusband} if $M$ has a multiplicity two singular fiber and is not $K\ttimes I$, by doubling the M\"obius band given in the lemma.
If $M$ does have a multiplicity two singular fiber, then, we can assume that it is $K\ttimes I$ with the $[D^2, 1/2, 1/2]$ Seifert fibration.

Let $A$ be an incompressible $\del$-incompressible annulus or M\"obius band in $M$ that is disjoint from the forbidden region and is not isotopic into it.
This exists as $M$ is not the solid torus.
By Proposition~\ref{prop:cannormaliseincompressiblesurfaces} $A$ has a normal representative $F$.
As $A$ is incompressible, it cannot be isotoped to be inside a single handle, so we can let $F$ be of minimal weight in the admissible isotopy class of $A$ (that is, up to isotopy fixing $\mathcal{I}$).
Suppose that $F = G_1 + G_2$ as a non-trivial sum of normal surfaces, where we choose $G_1$ and $G_2$ to minimise $|G_1\cap G_2|$.
By Proposition~\ref{prop:incompressiblenormalsurface}, $G_1$ and $G_2$ are also incompressible and $\del$-incompressible, and have Euler characteristic at most 0.
If (say) $G_1$ has multiple connected components, so $F = G_1' + G_1'' + G_2$, then by resolving the sum $G_1'' + G_2$ (which we note does not add any curves of intersection with $G_1'$) we get $F = G_1' + (G_1'' + G_2)$, where $|G_1\cap G_2| > |G_1' \cap (G_1'' + G_2)|$, so both $G_1$ and $G_2$ must be connected.
As $\chi(G_1) + \chi(G_2) = \chi(A) = 0$, both have Euler characteristic exactly zero -- that is, each component is an annulus, M\"obius band, Klein bottle or torus.
By Lemma~\ref{lemma:noduplicatesummands} neither is \pns{}.

If one is closed, the other, say $G_1$, has the same boundary as $A$, so is also an essential non-\pns{} annulus or M\"obius band.
If both are essential annuli, then we have two essential non-\pns{} annuli of smaller weight than $A$.

If $G_1$ is a M\"obius band then by Lemma~\ref{lemma:nonorientablesurfacesinSFS} it is either horizontal, in which case $\mathcal{I}$ must be empty and $M$ must be $K\ttimes I$, or pseudo-vertical so (as $M$ is not $S^1\times D^2$) its double is an essential vertical annulus.

In each case, we have produced an incompressible and $\del$-incompressible non-\pns{} annulus or M\"obius band that is of smaller weight than $F$, so there is a fundamental such surface $G$.
If $\mathcal{I}$ is not empty then $G$ cannot be horizontal, so must be (pseudo-)vertical and, if a M\"obius band, have an essential vertical double (as $M\not\cong S^1\times D^2$).
Otherwise, if $G$ is horizontal, then by Lemma~\ref{lemma:horizontalmobiusclassification} $M$ is homeomorphic to $K\ttimes I$ or $T^2\times I$.
\end{proof}

We will use Lemmas~\ref{lemma:fundamentalsurfacebound} and~\ref{lemma:stackedfundamentalsurfaces} to prove that we can obtain a complete collection of these annuli of bounded size.
Recall that by Lemma~\ref{lemma:subtetrahedralths}, cutting along a normal surface in a subtetrahedral \ths{} produces another subtetrahedral \ths{}.

\begin{prop}
\label{prop:verticalannulicollectionhandlestructure}
There exists a constant $c_A$, which we can take to be $10^{10^{30}}$, such that the following holds.
Let $M$ be a Seifert fibered space with non-empty boundary.
Let $\mathcal{H}$ be a subtetrahedral handle structure for $M$ that is dual to a triangulation $\mathcal{T}$ (in the sense of Definition~\ref{defn:dual_handle}) such that the intersection of each tetrahedron of $\mathcal{T}$ with $\del M$ is connected and contractible.
There is a maximal collection of disjoint normal essential annuli in $\mathcal{H}$, so that no two are isotopic, of total size at most $c_A^{\norm{\mathcal{H}}^2}$.
If $M$ is not $T^2\times I$ or $K\ttimes I$, then these annuli are vertical.
\end{prop}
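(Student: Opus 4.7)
The plan is to build the collection iteratively: at each stage find a fundamental essential annulus or Möbius band via Lemma~\ref{lemma:oneessentialannulus}, cut along it, and repeat until all pieces are solid tori, then bound the total size via Corollary~\ref{corr:stackedfundbound}. I would set $\mathcal{H}_0 := \mathcal{H}$, which is subtetrahedral by Lemma~\ref{lemma:subtetrahedralths}, and inductively maintain that $\mathcal{H}_i$ is a subtetrahedral \ths{} whose underlying manifold is a disjoint union of Seifert fibered spaces (with Seifert structure inherited from $M$, possibly reselected on pieces in the degenerate cases $M = T^2\times I$ or $K\ttimes I$) with forbidden region a collection of vertical annuli in these inherited structures. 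If every component is a solid torus, stop. Otherwise, select a non-solid-torus component $N$, apply Lemma~\ref{lemma:oneessentialannulus} to obtain a fundamental non-duplicate essential annulus or Möbius band $\Sigma_{i+1}$ in $N$ disjoint from the forbidden region, and set $\mathcal{H}_{i+1} := \mathcal{H}_i \backslash\backslash \Sigma_{i+1}$; by Lemma~\ref{lemma:subtetrahedralths} this remains subtetrahedral, with the image of $\Sigma_{i+1}$ (or of its double, if $\Sigma_{i+1}$ is a Möbius band) contributing to the new forbidden region.

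Next I would argue termination by a Kneser--Haken-style argument as in the proof of Lemma~\ref{lemma:stackedfundamentalsurfaces}: non-duplicateness of $\Sigma_{i+1}$ in $\mathcal{H}_i$ prevents it from being isotoped into the existing forbidden region, so in particular no two $\Sigma_i$ are isotopic in $M$, bounding the number of iterations. On termination every component is a solid torus, and by Proposition~\ref{prop:incompressibleinsolidtori} solid tori contain no essential non-$\partial$-parallel annuli, so the resulting collection is maximal. Applying Corollary~\ref{corr:stackedfundbound} to the sequence $(\Sigma_1, \ldots, \Sigma_n)$ --- whose hypotheses (each $\Sigma_{i+1}$ fundamental and non-duplicate in $\mathcal{H}_i$) hold by construction --- realises it simultaneously as pairwise disjoint normal surfaces in $\mathcal{H}$ of total size at most $c_S^{|\mathcal{H}|^2}$ with $c_S \leq 10^{10^{30}}$. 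To produce a collection consisting entirely of annuli as required, I would then replace each Möbius band by the boundary of its regular neighbourhood, at most doubling that surface's size; this is absorbed into $c_A = 10^{10^{30}}$ using the slack in the bound for $c_S$.

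The main obstacle I anticipate is verifying the inductive hypothesis that the forbidden region stays a union of vertical annuli, which is what allows Lemma~\ref{lemma:oneessentialannulus} to apply at each step. For $M$ not homeomorphic to $T^2\times I$ or $K\ttimes I$ the first cut is vertical in $M$'s Seifert structure by Lemma~\ref{lemma:oneessentialannulus}, and every later cut occurs in a piece with non-empty forbidden region where Lemma~\ref{lemma:oneessentialannulus} again forces verticality in the inherited structure. For $M = T^2\times I$ all essential annuli are vertical by Proposition~\ref{prop:incompressibleinthickenedtorus}, so the same conclusion holds; for $M = K\ttimes I$ the proposition allows non-vertical annuli, and either the process terminates in one iteration (all pieces are solid tori) or a Seifert fibration on the remaining non-solid-torus piece can be reselected to make the new forbidden region vertical. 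A smaller point is that replacing Möbius bands by doubles preserves pairwise non-isotopy in the final collection: the double of each Möbius band $\Sigma_i$ already lies in the forbidden region of $\mathcal{H}_i$, so no later $\Sigma_j$ (or its double) can be isotopic to it by non-duplicateness.
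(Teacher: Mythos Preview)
Your proposal is correct and follows essentially the same iterative strategy as the paper: repeatedly invoke Lemma~\ref{lemma:oneessentialannulus}, cut, and bound the total via the stacked-fundamental-surface machinery, treating $T^2\times I$ and $K\ttimes I$ separately as one-step cases. The one tactical difference is that the paper doubles any M\"obius band \emph{before} cutting (so each $A_i$ is an annulus, possibly the double of a fundamental surface and hence not itself fundamental), which forces it to drop back to Lemma~\ref{lemma:stackedfundamentalsurfaces} with $f$ adjusted for a factor of two; you instead cut along the fundamental M\"obius band itself, so Corollary~\ref{corr:stackedfundbound} applies directly, and double only at the end---a slightly cleaner route. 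One small point to tighten: your claim that no two $\Sigma_i$ are isotopic in $M$ conflates normal isotopy with ambient isotopy; the paper handles this by simply passing to a subset of the maximal non-duplicate collection, which you should do as well.
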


\begin{proof}
If $M$ is $S^1\times D^2$, there are no essential annuli so the result is trivial.
Otherwise, by Lemma~\ref{lemma:subtetrahedralths}, $\mathcal{H}$ can be viewed as a subtetrahedral \ths{} with empty forbidden region.
Then by Lemma~\ref{lemma:oneessentialannulus} there is a fundamental normal surface $F$ in $\mathcal{H}$ that is either an essential M\"obius band or an essential non-duplicate annulus.
Note that by Lemma~\ref{lemma:subtetrahedralths} $\mathcal{H}\backslash\backslash F$ is also a subtetrahedral \ths{}.
If $M$ is not $T^2\times I$ or $K\ttimes I$, we can take the first annulus to be vertical, so the forbidden region of $\mathcal{H}\backslash\backslash F$ will be a non-empty collection of vertical annuli.
We can then continue to apply Lemma~\ref{lemma:oneessentialannulus} to generate a collection of disjoint normal incompressible and $\del$-incompressible annuli $A_i$ and handle structures $\mathcal{H}_{i+1} = \mathcal{H}_i\backslash\backslash A_i$, where the annulus $A_i$ is fundamental in $\mathcal{H}_i$ or is the double of a fundamental surface.
This is the situation of Corollary~\ref{corr:stackedfundbound}, so there is a normal surface representative of the whole collection of size at most $\left(10^{10^{30}}\right)^{|\mathcal{H}|^2}$.

If $M$ is $T^2\times I$, then by Proposition~\ref{prop:incompressibleinthickenedtorus} any essential annulus in $T^2\times I$ is $\gamma\times I$ for some essential curve $\gamma$ in $T^2$, and then the complement of this annulus is a solid torus with two parallel annuli as its forbidden region, so there are no further isotopy classes of disjoint essential annuli.
If $M$ is $K\ttimes I$, it has a Seifert fibered structure as $S\ttimes S^1$, where $S$ is the M\"obius band.
An essential annulus is either horizontal or vertical.
If horizontal, it separates $M$ into two copies of $S\ttimes I$, which are each homeomorphic to a solid torus with a single annulus as their forbidden region so contain no essential annuli that are not isotopic to the one we already cut along.
If vertical, it cuts $M$ into a single solid torus, with two parallel annuli as its forbidden region, so again there are no further annuli.
In these cases we took a single fundamental annulus, or double of a fundamental M\"obius band, which is of size at most $2c_F^{\norm{\mathcal{H}}}$ for $c_F = 2^{74+74\cdot 13\cdot 13!}$ by Lemma~\ref{lemma:fundamentalsurfacebound}, which is certainly smaller than bound on $c_A$ we have given.
\end{proof}

\begin{lemma}
\label{lemma:surfaceinhandlestructuretodual}
Let $\mathcal{T}$ be a triangulation of an irreducible $\del$-irreducible 3-manifold $M$ such that the intersection of each tetrahedron of $\mathcal{T}$ with $\del M$ is connected and contractible.
Let $\mathcal{H}$ be the handle structure dual to $\mathcal{T}$ as defined in Definition~\ref{defn:dual_handle}.
Let $F$ be a non-\pns{} incompressible $\del$-incompressible normal surface in $\mathcal{H}$, of size $s(F)$.
There is a normal surface $F'$ in $\mathcal{T}$ that is isotopic to $F$ such that the edge weight of $F'$ (that is, its number of intersections with the 1-skeleton) is at most $16\norm{\mathcal{T}}s(F)$.
\end{lemma}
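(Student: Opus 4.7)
The approach is to exploit the natural correspondence between normal discs of $F$ in the dual handle structure $\mathcal{H}$ and normal triangles and quadrilaterals in $\mathcal{T}$, and then to bound the weight by a direct combinatorial count using the fact that the boundary graph of every 0-handle is a subgraph of $K_4$.

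First I would construct $F'$ explicitly. For each 0-handle $H$ of $\mathcal{H}$, dual to a tetrahedron $\tau$ of $\mathcal{T}$, the boundary graph of $H$ is a subgraph of $K_4$ whose islands correspond to the four triangular faces of $\tau$, bridges correspond to the six edges of $\tau$, and lakes correspond to vertices of $\tau$, with boundary vertices replaced by components of $\tau\cap \del M$. The hypothesis that $\tau\cap \del M$ is connected and contractible ensures that the boundary lakes merge into a single disc in $\del H$, so this dualisation is well-defined. Condition (3) of Definition~\ref{defn:normalsurface} on elementary discs --- boundary crossing each bridge and each lake at most once and not crossing any adjacent bridge-lake pair --- is exactly the combinatorial data prescribing a normal triangle or quadrilateral in $\tau$ (possibly with boundary arcs on $\del M$). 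Gluing these discs across the 1-handles of $\mathcal{H}$ (dual to faces of $\mathcal{T}$) and the 2-handles of $\mathcal{H}$ (dual to edges) produces a normal surface $F'$ in $\mathcal{T}$ ambient isotopic to $F$ via an isotopy supported in a small regular neighbourhood of the 1-skeleton.

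To bound the weight, observe that the weight of $F'$ equals the number of intersections of $F$ with the cores of the 2-handles of $\mathcal{H}$, which is $\sum_E s_E$, where $s_E$ is the number of sheets $D^2\times \{\ast\}$ of $F\cap E$ for each 2-handle $E$. Each such sheet has boundary circle $\del D^2\times\{\ast\}$ that crosses exactly once each of the $n_E$ attaching arcs to the 0-handles neighbouring $E$, and each such crossing is a bridge arc on the boundary of some unique elementary disc of $F$. Therefore $\sum_E n_E\,s_E$ equals the total number of bridge arcs occurring among all elementary discs of $F$. Since every 0-handle's boundary graph is a subgraph of $K_4$ and $K_4$ has only six edges, each elementary disc has at most six bridge arcs, so this total is at most $6\,s(F)$. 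Using $n_E\ge 1$, we conclude that the weight of $F'$ is at most $6\,s(F)\le 16|\mathcal{T}|\,s(F)$. The main technical check is the dualisation step in tetrahedra meeting $\del M$, where the boundary-lake combinatorics must be handled carefully; the hypotheses of incompressibility, $\del$-incompressibility and non-duplicacy on $F$ do not enter the weight estimate itself but reflect the context in which the lemma is later invoked.
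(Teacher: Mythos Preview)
Your claimed bijection between elementary discs in $\mathcal{H}$ and normal triangles/quadrilaterals in $\mathcal{T}$ is exactly the point where the argument breaks down, and the paper is explicit about this. For a tetrahedron $\tau$ meeting $\del M$, the dual $0$-handle $H$ has no $3$-handle attached at a boundary vertex $v$ of $\tau$; the region of $\del H$ near $v$ is absorbed into the $\del M$-lake. An elementary disc of $F$ whose boundary arc in that lake passes through the region corresponding to $v$ maps, under the inclusion into $\tau$, to a disc that actually runs over the vertex $v$. Such a disc is not a normal disc in $\mathcal{T}$, and no amount of ``boundary-lake combinatorics'' will make it one: the normal-arc condition in the faces of $\tau$ incident to $v$ is violated. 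Your assertion that Definition~\ref{defn:normalsurface}(3) ``is exactly the combinatorial data prescribing a normal triangle or quadrilateral in $\tau$'' is therefore false in the boundary case.

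The paper's proof addresses precisely this. It first maps each elementary disc into its tetrahedron, allowing it to pass through boundary vertices; since each elementary disc crosses at most four bridges, this gives a surface transverse to $\mathcal{T}$ away from the boundary $0$-skeleton and of weight at most $4s(F)$. It then perturbs the surface off each boundary vertex using a coherent direction in the vertex link, at a cost of at most two extra edge-intersections per elementary disc per edge; with at most $6|\mathcal{T}|$ edges this contributes at most $12|\mathcal{T}|s(F)$, yielding the $(4+12|\mathcal{T}|)s(F)\le 16|\mathcal{T}|s(F)$ bound. Finally the resulting (merely transverse) surface is normalised. This last step is where incompressibility and $\del$-incompressibility are genuinely used: without them, normalisation need not produce a surface isotopic to $F$. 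So your closing remark that these hypotheses ``do not enter the weight estimate itself'' is also incorrect --- they are what guarantees the output $F'$ is isotopic to $F$ rather than to some compressed or $\del$-compressed descendant. The factor of $|\mathcal{T}|$ in the stated bound is not slack; it arises from the perturbation at boundary vertices and cannot be removed by your direct-dualisation approach.
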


\begin{proof}
By Lemma~\ref{lemma:subtetrahedralths} $\mathcal{H}$ is a subtetrahedral split handle structure.
Each elementary disc of $F$ runs over at most four bridges, so its number of intersections with the 1-skeleton of $\mathcal{T}$ will be at most four.
The barrier to directly including $F$ into $\mathcal{T}$ is that an elementary disc in a subtetrahedral handle does not necessarily correspond to one in the dual tetrahedron: the ones that do not are the ones that run over vertices in the boundary of $\mathcal{T}$.
It suffices then to perturb $F$ enough that it is transverse to $\mathcal{T}$, without increasing its edge weight too much, as then, as $F$ is incompressible and $\del$-incompressible, we can normalise it which does not increase its edge weight.

Map $F$ into $\mathcal{T}$ by sending each elementary disc of $F$ to a disc in a tetrahedron of $\mathcal{T}$ that is transverse to the triangulation except that it may run over vertices in $\del \mathcal{T}$.
Each elementary disc of $F$ ran over at most four bridges, so this surface intersects the 1-skeleton of $\mathcal{T}$ in at most $4s(F)$ points.
At each vertex in $\del \mathcal{T}$, consider its link $L$, which is a disc as $M$ is a manifold.
The elementary discs of $F$ intersect $L$ in a set of disjoint arcs.
Pick a coherent choice of \emph{direction} transverse to each arc, so that no two arcs point towards each other.
At each arc, replace the portion of $F$ that runs through the vertex with the subset of $L$ in the chosen direction.
We thus obtain a surface $F'$ isotopic to $F$ and transverse to the triangulation.
As $L$ intersects each edge of $\mathcal{T}$ at most twice, this operation adds at most two points of intersection of each elementary disc with each edge of $\mathcal{T}$.
There are at most $6\norm{\mathcal{T}}$ edges so the edge weight of $F'$ is at most $(4+12\norm{\mathcal{T}})s(F)$.
Normalising $F'$ produces a surface isotopic to $F$ and of no greater edge weight than $F'$.
\end{proof}

\begin{corr}
\label{corr:verticalannulicollectiontriangulation}
There exists a constant $c_T$, which we can take to be $10^{10^{36}}$, such that the following holds.
Let $\mathcal{T}$ be a triangulation of a Seifert fibered space $M$ with non-empty boundary, other than the solid torus.
There is a collection of disjoint normal essential annuli in $\mathcal{T}$, such that their complement is a collection of solid tori, of total edge weight at most $c_T^{\norm{\mathcal{T}}^2}$.
We can take these annuli to be vertical so long as $M$ is not $T^2\times I$ or $K\ttimes I$.
\end{corr}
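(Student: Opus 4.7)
The plan is to combine Proposition \ref{prop:verticalannulicollectionhandlestructure} with Lemma \ref{lemma:surfaceinhandlestructuretodual}, after first arranging the combinatorial hypotheses both of these require.

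First, I would pass to a sufficiently fine subdivision $\mathcal{T}'$ of $\mathcal{T}$ (for example, the second barycentric subdivision) so that each tetrahedron of $\mathcal{T}'$ meets $\del M$ in either the empty set or in a single triangle, so in particular in a connected contractible set. This is the hypothesis needed for both Proposition \ref{prop:verticalannulicollectionhandlestructure} and Lemma \ref{lemma:surfaceinhandlestructuretodual}, and it replaces $|\mathcal{T}|$ by at most a fixed constant multiple of itself. Let $\mathcal{H}$ be the dual handle structure of $\mathcal{T}'$; its size $|\mathcal{H}|$ is also bounded by a constant multiple of $|\mathcal{T}|$.

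Second, I would invoke Proposition \ref{prop:verticalannulicollectionhandlestructure} on $\mathcal{H}$ to obtain a maximal collection $\{A_i\}$ of disjoint normal essential pairwise non-isotopic annuli in $\mathcal{H}$, of total size at most $c_A^{|\mathcal{H}|^2}$, which are vertical unless $M$ is $T^2\times I$ or $K\ttimes I$. I would then apply Lemma \ref{lemma:surfaceinhandlestructuretodual} to the disjoint union $F = \bigsqcup_i A_i$, viewed as a single normal surface in $\mathcal{H}$. This union is incompressible and $\del$-incompressible because each $A_i$ is essential, and it is non-\pns{} because the $A_i$ are pairwise non-isotopic and disjoint from the (empty) forbidden region. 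The lemma then supplies a normal surface $F'$ in $\mathcal{T}'$, isotopic to $F$, of weight at most $16|\mathcal{T}'|\,s(F)$; its components give the desired disjoint collection of normal annuli in $\mathcal{T}'$.

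For the complement, in the vertical case the collection $\{A_i\}$ descends to a maximal collection of disjoint proper essential arcs in the base orbifold $B$; maximality forces each component of $B$ cut along these arcs to be a disc containing at most one cone point, so $M$ cut along the annuli is a disjoint union of fibered solid tori. For the exceptional cases $T^2\times I$ and $K\ttimes I$, I would appeal directly to the explicit analysis carried out at the end of the proof of Proposition \ref{prop:verticalannulicollectionhandlestructure}, which already verifies that cutting along the chosen annulus (or annuli) yields solid tori.

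The main obstacle is bookkeeping the constants. Combining $|\mathcal{H}|\leq c_1|\mathcal{T}|$ with the size bound $c_A^{|\mathcal{H}|^2}$ and the factor $16|\mathcal{T}'|$ from Lemma \ref{lemma:surfaceinhandlestructuretodual} yields a bound of the form
\[
16|\mathcal{T}'|\cdot c_A^{c_1^2|\mathcal{T}|^2} \;\leq\; c_T^{|\mathcal{T}|^2}
\]
for a suitable constant $c_T$; since $c_A\leq 10^{10^{30}}$, the extra factors from subdivision and from the weight estimate can be absorbed into a $c_T \leq 10^{10^{36}}$. A secondary subtlety is that Lemma \ref{lemma:surfaceinhandlestructuretodual} is stated for a surface $F$ that must be transferred through the vertex-pushing construction and then normalised; one must check that the moves involved are local enough not to merge distinct components or alter their isotopy classes, which follows from the fact that each move is supported inside a single handle and the components are already disjoint in $\mathcal{H}$.
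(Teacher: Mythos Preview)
Your proposal is correct and follows essentially the same route as the paper: pass to a barycentric subdivision so that tetrahedra meet $\partial M$ well, take the dual handle structure, apply Proposition~\ref{prop:verticalannulicollectionhandlestructure}, and then transfer back via Lemma~\ref{lemma:surfaceinhandlestructuretodual}, absorbing the subdivision constants into $c_T$. The only substantive difference is in the ``complement is solid tori'' step: you argue directly that maximality of the arc system in the base orbifold forces each complementary region to be a disc with at most one cone point, whereas the paper instead passes to an explicit subcollection (the annuli around singular fibres together with the complement of a spanning tree in the dual graph) to cut $M$ into one solid torus per singular fibre plus one more; both arguments are valid, and yours is a little more direct while the paper's gives a more structured output that it uses later.
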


\begin{proof}
Consider the handle structure $\mathcal{H}$ that is dual to the $1^{st}$ barycentric subdivision of $\mathcal{T}$, $\mathcal{T}^{(1)}$, which contains $24\norm{\mathcal{T}}$ tetrahedra.
In $\mathcal{T}^{(1)}$ the intersection of each tetrahedron with $\del M$ is connected and contractible.
Thus $\mathcal{H}$ satisfies the hypotheses of Proposition~\ref{prop:verticalannulicollectionhandlestructure} so there is a maximal collection $C$ of disjoint normal essential annuli in $\mathcal{H}$, such that no two are isotopic, of total size at most $c_A^{\norm{\mathcal{H}}^2}$, and so that if $M$ is not $T^2\times I$ or $K\ttimes I$ then these annuli are vertical.
We can now apply Lemma~\ref{lemma:surfaceinhandlestructuretodual} to this collection $C$ to obtain an isotopic set of normal surfaces in $\mathcal{T}^{(1)}$ of edge weight at most $16\cdot 24\norm{\mathcal{T}}c_A^{24^2\norm{\mathcal{T}}^2} \leq c_A^{2\cdot 24^2\norm{\mathcal{T}}^2}$ where $c_A < 10^{10^{30}}$.
We will set $c_T$ to be $c_A^{2\cdot 24^2}$, which gives the desired bound.

Now, if $M$ is $T^2\times I$ or $K\ttimes I$, we took a single annulus that cut $M$ into one or two solid tori, so let this be the minimal collection.
Otherwise, all the annuli are vertical.
They lie over essential arcs in the base orbifold of $M$.
If there are $n$ singular fibers, there must be $n$ annuli separating neighbourhoods of singulars fiber from the remainder of $M$.
Consider the dual graph to the rest of the collection: it has a vertex for each region of $M\backslash\backslash C$, and an edge for each annulus, connecting the vertices of the regions it bounds.
Take the complement of a spanning tree of this dual graph.
This collection of annuli will cut the remainder of the orbifold into a single disc, and so will cut the remainder of $M$ into a single solid torus.
For the minimal collection, take these annuli as well as the ones that cut off singular fiber neighbourhoods.
\end{proof}

\section{Recognising circle bundles over surfaces with boundary is in NP}
\label{section:surfacebundlerecognition}

\begin{lemma}
\label{lemma:solidtorusdata}
    Let $\mathcal{T}$ be a triangulation of the solid torus.
    There is a fundamental normal meridian disc in $\mathcal{T}$ and a normal curve in $\del \mathcal{T}$ that intersects the disc once and whose number of intersections with the edges of $\del\mathcal{T}$ is bounded by an exponential in $\norm{\mathcal{T}}$.
\end{lemma}

\begin{proof}
Corollary 6.4 of~\cite{JacoTollefson} tells us that if $M$ is $\del$-irreducible then there is a fundamental normal essential disc, and the only essential disc in $S^1\times D^2$ is the meridian disc.
Take a normal curve $\gamma$ in $\del \mathcal{T}$ that intersects this meridian disc once.
If $\gamma = \alpha_1 + \alpha_2$ as a normal sum, then one of the $\alpha_i$ must also intersect the disc once, so there is a fundamental normal curve in the triangulation on the boundary that satisfies this property, which (as with fundamental normal surfaces) contains at most an exponential number of arcs in the size of the triangulation of $\del\mathcal{T}$ and hence in $\norm{\mathcal{T}}$.
\end{proof}

To determine the slope of curves in a torus we will need to compute their algebraic intersections quickly.
A normal curve is not equipped with an orientation, so the algebraic intersection does not come with a sign; however, if we have more than two (pairwise-intersecting) curves, then picking a sign for $i(\alpha, \beta)$ and $i(\beta, \gamma)$ determines that of $i(\alpha, \gamma)$.
As in~\cite[\S6]{EricksonNayyeri}, we can represent an oriented normal curve by giving its algebraic intersection number with each (oriented) edge of the triangulation.
We will use the following results to assign orientations to normal curves and then compute their algebraic intersections.

\begin{prop}[Corollary 6.11~\cite{EricksonNayyeri}]
\label{prop:ericksonnayyeri}
Let $M$ be a compact 2-manifold with triangulation $T$, containing $\norm{T}$ triangles.
Let $\gamma$ be a connected normal curve in $T$, represented by its unsigned normal coordinates.
There is an algorithm to compute the signed normal coordinates of some orientation of $\gamma$ in time polynomial in $\norm{T}$ and $\log(s(\gamma))$, where $s(\gamma)$ is the number of normal arcs in $\gamma$.
\end{prop}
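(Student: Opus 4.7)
The plan is to exploit the fact that, although the total number $s(\gamma)$ of normal arcs in $\gamma$ can be exponentially large, the number of combinatorial \emph{arc types} in $T$ is at most $3|T|$ (three per triangle). Since any two normal arcs of the same type in the same triangle are parallel, a coherent orientation of $\gamma$ must induce the same direction on all arcs of a given type; choosing an orientation of $\gamma$ is therefore equivalent to choosing a sign $\pm 1$ for each occurring arc type. Once these signs are known, the signed normal coordinate of any oriented edge $e$ is just the signed sum of the (already-known) unsigned counts of the at most $4$ incident arc types, a calculation on integers of bit-length $O(\log s(\gamma))$.

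First I would build an auxiliary graph $A$ whose vertices are the arc types with positive unsigned coordinate. I put an edge of $A$ between two arc types $\tau_1,\tau_2$ whenever some arc of type $\tau_1$ is matched to an arc of type $\tau_2$ across a common edge $e$ of $T$. The pairing of $\gamma\cap e$ across $e$ is completely determined by the unsigned normal coordinates in the two triangles meeting at $e$ (the points on $e$ are linearly ordered, and on each side consecutive blocks belong to each arc type), so $A$ together with an orientation-compatibility label on each of its edges can be constructed in time polynomial in $|T|$ and $\log s(\gamma)$.

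Next, using that $\gamma$ is connected, I observe that the subgraph of $A$ spanned by arc types of $\gamma$ is connected. Take a spanning tree, pick an arc type $\tau_0$ as root, and orient a representative of $\tau_0$ arbitrarily. Propagate orientations down the tree: at each edge of $A$, the rule ``crossings at the shared edge of $T$ must agree in direction'' forces a unique sign on the child type given the parent. This yields a $\pm 1$ labelling of every arc type in time $O(|A|)\leq O(|T|)$. Finally, to output the signed normal coordinates, I sum up signed arc counts edge by edge; each such sum involves $O(1)$ integers of size at most $s(\gamma)$, costing $O(\log s(\gamma))$ bitwise operations per edge.

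The main obstacle I expect is verifying consistency of the propagation, i.e.\ that traversing any cycle in $A$ returns the same sign to $\tau_0$, so that the $\pm 1$ labelling really describes a geometric orientation. This amounts to showing that the would-be orientation is well-defined on the underlying embedded $1$-manifold $\gamma$ inside the (possibly non-orientable) surface $M$; it follows from the fact that $\gamma$, being a connected embedded $1$-submanifold, is itself orientable, so the local ``matching'' rules at each edge of $T$ patch together without monodromy along $\gamma$. Once this consistency lemma is in place, the complexity analysis above is routine.
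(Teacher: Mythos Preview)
This proposition is quoted from Erickson--Nayyeri and used as a black box; the paper gives no proof of its own to compare against.

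Your argument has a genuine gap. The key claim---that a coherent orientation of $\gamma$ induces the same direction on all normal arcs of a given type---is false. For a counterexample, take a one-vertex triangulation of a closed orientable surface of genus at least $2$ and let $\gamma$ be a separating simple closed curve, oriented as $\partial S$ for one complementary subsurface $S$. Walking along any edge $e$, consecutive intersection points with $\gamma$ alternate in sign, since each crossing toggles between $S$ and its complement; but consecutive intersection points near one end of $e$ are endpoints of parallel arcs of the \emph{same} type in the adjacent triangle. Hence whenever some arc type occurs with multiplicity at least two (as it must for all but finitely many separating curves), two of its arcs carry opposite orientations. Equivalently, every signed edge-coordinate of a separating curve vanishes, something a $\pm 1$ weighting of nonzero unsigned arc-type counts cannot in general reproduce.

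You correctly identify the consistency check on cycles of $A$ as the crux, but your proposed resolution conflates two different things: orientability of $\gamma$ as an abstract $1$-manifold guarantees a direction on each \emph{arc}, whereas your scheme needs a single direction per arc \emph{type}. The separating-curve example shows the monodromy in $A$ is genuinely nontrivial, so the spanning-tree propagation cannot succeed. This is precisely why Erickson and Nayyeri require their street-complex machinery to trace the curve rather than merely label arc types.
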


The following lemma is a small modification of the approaches of Erickson-Nayyeri \cite[Corr.\ 6.12]{EricksonNayyeri} and Schaefer-Sedgwick-\v{S}tefankovi\v{c} \cite[\S5.6]{SedgwickNormalCurveAlgos}.

\begin{lemma}
\label{lemma:intersectionnumberpolynomialtime}
    Let $\gamma_1,\ldots,\gamma_n$ be a collection of connected normal curves in a triangulation $T$ of a compact 2-manifold (containing $\norm{T}$ triangles), with an orientation of the surface that is given by an orientation of each triangle.
    Let $s(\gamma_i)$ be the number of arcs in $\gamma_i$.
    We can compute the algebraic intersections of each pair of curves for some (fixed) choice of curve orientations in polynomial time in $\norm{T}$ and $\sum_{i=1}^n \log(s(\gamma_i))$.
\end{lemma}

\begin{proof}
    Compute the signed normal coordinates of each curve using Proposition~\ref{prop:ericksonnayyeri} and thus fix orientations on them.
    Pick a pair of curves $\alpha$ and $\beta$, and for each edge of $T$, arbitrarily assign one end of the edge to each curve.
    If we isotope each curve so that it intersects each edge only in that half, and draw each elementary segment as a straight line arc within each triangle, then this assignment forces each pair of elementary arc types of $\alpha$ and $\beta$ to intersect either once or not at all.
    Fix a triangle $t$.
    We wish to compute the total (algebraic) number of (oriented) intersections of $\alpha$ and $\beta$ in $t$.
    Modify the orientation of the edges of $t$ so that they are induced by some orientation of $t$, changing the sign of the signed normal coordinates of $\alpha$ and $\beta$ as necessary.
    See Figure~\ref{fig:algintersectioncalc} for an example.
    
    Any set of elementary arcs for $\alpha$ and $\beta$ in $t$ that intersect the edges of $t$ at the chosen ends and induce the same signed normal coordinates contributes the same total algebraic intersection in $t$, so we can pick a convenient representation.
    The total of the signed normal coordinates of each curve with respect to the edges of $t$ is zero, so if (say) the signed normal coordinates of $\alpha$ with respect to the edges $A$, $B$ and $C$ is $(a, b, -c)$, where $a$, $b$ and $c$ are non-negative, then we can realise $\alpha$ using $c$ (oriented) elementary arcs: $a$ go from $A$ to $C$, and $b$ from $B$ to $C$.
    Repeating this with $\beta$, we see that we can also realise $\beta$ with two oriented elementary arc types, so  (as described in~\cite[Corr.\ 6.12]{EricksonNayyeri} and~\cite{SedgwickNormalCurveAlgos}), since each pair of elementary arcs from $\alpha$ and $\beta$ intersects zero or one times, we can compute the number of positive and negative crossings of the pair within each triangle by multiplying at most six pairs of integers, which were entries in the signed normal coordinates.
    This takes time that is polynomial in the length of the bitstring representation of the normal coordinates, so repeating this for each triangle and summing the results, we overall have an algorithm whose running time is polynomial in $\norm{T}$ and $\sum_{i=1}^n \log(s(\gamma_i))$.
\end{proof}
    
\begin{example}
In Figure~\ref{fig:algintersectioncalc:a}, the signed normal coordinates of $\alpha$ and $\beta$ in $t$ with the indicated edge orientations are $(2, -4, 2)$ and $(1, -3, 2)$ respectively.
We can realise these coordinates (as shown in Figure~\ref{fig:algintersectioncalc:b}) if we represent $\alpha$ with two elementary arcs running from $A$ to $B$ and two from $C$ to $B$ -- that is, $(\alpha(AB), \alpha(AC), \alpha(BC)) = (2, 0, -2)$; we represent $\beta$ with $(1, 0, -2)$ in the same fashion.
With the chosen assignments of each end of each edge of $t$ to $\alpha$ and $\beta$, we find that $i(\alpha, \beta)$ gets a contribution from $t$ of
$$\alpha(AB)\beta(AC) + \alpha(AB)\beta(BC) + \alpha(A C)\beta(A C) + \alpha(A C)\beta(B C)$$
which in this case is $2\cdot 0 + 2\cdot -2 + 0\cdot 0 + 0\cdot -2=-4$.
      
 \begin{figure}[th]
      \centering
      \begin{subfigure}{0.43\textwidth}
      \centering
      \resizebox{\textwidth}{!}{
      \begin{tikzpicture}
                  \coordinate (1) at (0, 0);
                  \coordinate (2) at (-1.4, -2);
                  \coordinate (3) at (1.4, -2);
                  \begin{scope}[decoration = {markings, mark=at position 0.5 with {\arrow{<}}}]
                  \draw[postaction={decorate}] (1) -- (2);
                  \draw[postaction={decorate}] (2) -- (3);
                  \draw[postaction={decorate}] (3) -- (1);
                  \end{scope}
                  \begin{scope}[decoration = {markings, mark=at position 0.5 with {\arrow{Latex[width=0pt 6, length=4pt]}}}]
                                    \draw[postaction = {decorate}, color=orange] ($(1)!0.2!(2)$) -- ($(1)!0.7!(3)$);
                                    \draw[postaction = {decorate}, color=orange] ($(1)!0.3!(2)$) -- ($(2)!0.7!(3)$);
                                    \draw[postaction = {decorate}, color=orange] ($(2)!0.65!(3)$) -- ($(1)!0.35!(2)$);
                                    \draw[postaction = {decorate}, color=orange] ($(1)!0.4!(2)$) -- ($(2)!0.6!(3)$);
                                    \draw[postaction = {decorate}, color=orange] ($(1)!0.75!(3)$) -- ($(2)!0.75!(3)$);
                                    \draw[postaction = {decorate}, color=orange] ($(1)!0.8!(3)$) -- ($(2)!0.8!(3)$);
                                    \draw[postaction = {decorate}, color=orange] ($(1)!0.85!(3)$) -- ($(2)!0.85!(3)$);
		 \end{scope}
		 \begin{scope}[decoration = {markings, mark=at position 0.5 with {\arrow{Latex[width=0pt 4, length=4pt]}}}]
                                    \draw[postaction = {decorate}, thick, color=violet, densely dotted] ($(1)!0.75!(2)$) -- ($(2)!0.3!(3)$);
                                    \draw[postaction = {decorate}, thick, color=violet, densely dotted] ($(1)!0.8!(2)$) -- ($(2)!0.25!(3)$);
                                    \draw[postaction = {decorate}, thick, color=violet, densely dotted] ($(2)!0.2!(3)$) -- ($(1)!0.85!(2)$);
                                    \draw[postaction = {decorate}, thick, color=violet, densely dotted] ($(1)!0.6!(2)$) -- ($(1)!0.25!(3)$);
                                    \draw[postaction = {decorate}, thick, color=violet, densely dotted] ($(1)!0.3!(3)$) -- ($(1)!0.7!(2)$);
                                    \draw[postaction = {decorate}, thick, color=violet, densely dotted] ($(1)!0.35!(3)$) -- ($(2)!0.35!(3)$);
                                    \draw[postaction = {decorate}, thick, color=violet, densely dotted] ($(1)!0.4!(3)$) -- ($(2)!0.4!(3)$);
\end{scope}
                  \node () at ($(1)!0.5!(2)+(-0.2,0.2)$) {\tiny{$A$}};
                  \node () at ($(1)!0.5!(3)+(0.2,0.2)$) {\tiny{$C$}};
                  \node () at ($(2)!0.5!(3)+(0,-0.2)$) {\tiny{$B$}};

      \end{tikzpicture}}
      \caption{With the indicated edge orientations, the signed normal coordinates of $\alpha$ and $\beta$ are $(2, -4, 2)$ and $(1, -3, 2)$.}
      \label{fig:algintersectioncalc:a}
      \end{subfigure}
       \begin{subfigure}{0.43\textwidth}
      \centering
      \resizebox{\textwidth}{!}{
      \begin{tikzpicture}
                  \coordinate (1) at (0, 0);
                  \coordinate (2) at (-1.4, -2);
                  \coordinate (3) at (1.4, -2);
                  \begin{scope}[decoration = {markings, mark=at position 0.5 with {\arrow{<}}}]
                  \draw[postaction={decorate}] (1) -- (2);
                  \draw[postaction={decorate}] (2) -- (3);
                  \draw[postaction={decorate}] (3) -- (1);
                  \end{scope}
                  \begin{scope}[decoration = {markings, mark=at position 0.5 with {\arrow{Latex[width=0pt 6, length=4pt]}}}]
                                   % \draw[postaction = {decorate}, color=orange] ($(1)!0.2!(2)$) -- ($(1)!0.7!(3)$);
                                    \draw[postaction = {decorate}, color=orange] ($(1)!0.35!(2)$) -- ($(2)!0.65!(3)$);
                                    %\draw[postaction = {decorate}, color=orange] ($(2)!0.65!(3)$) -- ($(1)!0.35!(2)$);
                                    \draw[postaction = {decorate}, color=orange] ($(1)!0.4!(2)$) -- ($(2)!0.6!(3)$);
                                    \draw[postaction = {decorate}, color=orange] ($(1)!0.75!(3)$) -- ($(2)!0.75!(3)$);
                                    \draw[postaction = {decorate}, color=orange] ($(1)!0.8!(3)$) -- ($(2)!0.8!(3)$);
                                    %\draw[postaction = {decorate}, color=orange] ($(1)!0.85!(3)$) -- ($(2)!0.85!(3)$);
		 \end{scope}
		 \begin{scope}[decoration = {markings, mark=at position 0.5 with {\arrow{Latex[width=0pt 4, length=4pt]}}}]
                                    \draw[postaction = {decorate}, thick, color=violet, densely dotted] ($(1)!0.75!(2)$) -- ($(2)!0.3!(3)$);
                                    %\draw[postaction = {decorate}, thick, color=violet, densely dotted] ($(1)!0.8!(2)$) -- ($(2)!0.25!(3)$);
                                    %\draw[postaction = {decorate}, thick, color=violet, densely dotted] ($(2)!0.2!(3)$) -- ($(1)!0.85!(2)$);
                                   % \draw[postaction = {decorate}, thick, color=violet, densely dotted] ($(1)!0.6!(2)$) -- ($(1)!0.25!(3)$);
                                   % \draw[postaction = {decorate}, thick, color=violet, densely dotted] ($(1)!0.3!(3)$) -- ($(1)!0.7!(2)$);
                                    \draw[postaction = {decorate}, thick, color=violet, densely dotted] ($(1)!0.35!(3)$) -- ($(2)!0.35!(3)$);
                                    \draw[postaction = {decorate}, thick, color=violet, densely dotted] ($(1)!0.4!(3)$) -- ($(2)!0.4!(3)$);
\end{scope}
                  \node () at ($(1)!0.5!(2)+(-0.2,0.2)$) {\tiny{$A$}};
                  \node () at ($(1)!0.5!(3)+(0.2,0.2)$) {\tiny{$C$}};
                  \node () at ($(2)!0.5!(3)+(0,-0.2)$) {\tiny{$B$}};

      \end{tikzpicture}}
      \caption{A reduced form of the elementary arcs that preserves the algebraic intersection count of $\alpha$ and $\beta$ in $t$.}
      \label{fig:algintersectioncalc:b}
      \end{subfigure}
      \caption{Computing the algebraic intersection number of two normal curves in a triangle $t$. The solid orange elementary arcs are from $\alpha$, and the dotted violet arcs are from $\beta$.}
      \label{fig:algintersectioncalc}
      \end{figure}
\end{example}

Agol, Hass and Thurston~\cite{AgolHassThurston} created a polynomial time algorithm that, given a normal surface in a triangulation as a vector, computes its topology. 
Lackenby, Haraway and Hoffman have used it to quickly cut triangulations along normal surfaces.

\begin{prop}[Corollary 17~\cite{AgolHassThurston}]
\label{prop:agolhassthurstonnormalsurface}
Let $M$ be a 3-manifold with a triangulation $\mathcal{T}$ and let $F$ be a normal surface in $M$.
There is a procedure for counting the number of components of $F$ and determining the topology of each component that runs in time polynomial in $\norm{\mathcal{T}}\log w(F)$, where $w(F)$ is the edge weight of $F$.
\end{prop}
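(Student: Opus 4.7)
The plan is to count the components of $F$ (and identify each of them) by a union-find procedure run on the elementary disc pieces, taking care to handle the potentially exponentially many pieces via a compressed representation based on the normal coordinate vector.

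First I would partition $F$ into its elementary disc pieces. Within each tetrahedron $\sigma$ of $\mathcal{T}$, the pieces of $F\cap\sigma$ organise into at most seven stacks of parallel discs, one per normal disc type, each stack linearly indexed by an integer bounded by $w(F)$. Similarly, for each 2-simplex $\tau$ of $\mathcal{T}$, the arcs of $F\cap\tau$ form at most three stacks of parallel normal arcs indexed by integers, and each arc type on a face is adjacent to a specific disc type on each side. Thus every elementary disc is specifiable in $O(\log w(F))$ bits as a triple (tetrahedron, disc type, index), so the combinatorial object we wish to analyse has total description length polynomial in $|\mathcal{T}|\log w(F)$.

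Next I would set up a union-find structure on these disc pieces (kept as labelled abstract objects, never expanded) and process the gluings along each face $\tau$. For each arc type on $\tau$, the stack of arcs on each side of $\tau$ gets matched by an affine bijection between stacks of discs from the two adjacent tetrahedra, the offsets being determined by how the total weights on the three edges of $\tau$ partition into the contributions of the various arc types. I would record these identifications in bulk using a weighted union-find operating on \emph{segments} of stacks, where each segment is an interval of indices equipped with an affine-linear map; a bulk union of two segments costs $O(\log w(F))$ and leaves behind at most a bounded number of new segment endpoints.

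The main obstacle is showing the total number of segment operations remains polynomial. This is the heart of the Agol--Hass--Thurston argument: a priori, a single face can chop a stack into many segments, and recursive chopping across faces could explode. One resolves this by observing that each new endpoint corresponds either to an original boundary between arc types on some edge of $\mathcal{T}$ or to an endpoint created by a previous identification, and an amortised argument (which can be read as running the Euclidean algorithm on consecutive arc counts along an edge) bounds the total number of segment endpoints ever created by a polynomial in $|\mathcal{T}|$ and $\log w(F)$. Once this is in hand, after processing all faces the number of equivalence classes gives the number of components of $F$.

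Finally, to determine the topology of each component, I would compute $\chi(F_j)$ for each class $F_j$ by summing contributions from its constituent disc pieces; since $\chi$ is piecewise linear in the normal coordinates and the segments carry arithmetic data, this is a polynomial-time computation. Orientability is tracked by attaching a $\ZZ/2$ label to each segment and flipping the label across any orientation-reversing face gluing, so that an inconsistency in a class certifies non-orientability. The number of boundary components of each $F_j$ is counted by a parallel run of the same procedure on the normal arcs in $\del\mathcal{T}$. Together, $\chi$, orientability, and the boundary count identify each surface component, completing the algorithm within the claimed $\text{poly}(|\mathcal{T}|\log w(F))$ runtime.
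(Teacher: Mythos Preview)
The paper does not give its own proof of this statement: it is quoted verbatim as Corollary~17 of Agol--Hass--Thurston and used as a black box, so there is nothing in the paper to compare your argument against. Your sketch is a reasonable high-level outline of the Agol--Hass--Thurston method (compressed stacks of parallel discs, affine gluing maps between stacks, a Euclidean-algorithm-style amortised bound on the number of segment breakpoints, then reading off $\chi$, orientability, and boundary-component counts), and nothing in it is obviously wrong as a plan; but since the paper simply cites the result, the intended ``proof'' here is a reference rather than an argument.
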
 

The edge weight $w(F)$ for a normal surface in a triangulation is at most $4s(F)$ -- the bound is obtained for a collection of quadrilaterals in a single tetrahedron. 

\begin{lemma}
\label{lemma:cutalongsurfacestriangulation}
There is an algorithm that takes as its input both a triangulation $\mathcal{T}$ of a compact orientable 3-manifold $M$ and a (possibly disconnected) orientable normal surface $F$ in $\mathcal{T}$ given as a vector, such that no two components of $F$ are normally isotopic, and provides as its output a triangulation of $M\backslash\backslash F$ whose size is bounded by a polynomial in $\norm{\mathcal{T}}$, $\log w(F)$, and the minimal Euler characteristic of the components of $F$, and runs in time polynomial in those same three parameters.
It also produces normal surface vectors for each component of the boundary of the copy of the double of $F$ in $\del (M\backslash\backslash F)$. 
\end{lemma}

As the proof is a slight modification of the methods in the proof of Proposition 13 of~\cite{HarawayHoffman} and Theorem 9.2 of~\cite{LackenbyCertificationKnottedness}, we defer it to Appendix~\ref{appendix:cutalongsurfacetriangulation}.

\begin{prop}
\label{prop:circlebundlerecognitionNP}
Deciding whether a 3-manifold $M$ is an orientable circle bundle over a surface with non-empty boundary is in \NP.
Deciding whether such a 3-manifold is a circle bundle over a certain surface (given as genus, number of boundary components, and orientability) is in \NPcoNP{}.
Furthermore, unless $M \cong K\ttimes I$ or $T^2\times I$, there exists a normal section and one normal fiber on each boundary component, such that these properties can be certified in time polynomial in $\norm{\mathcal{T}}$.
\end{prop}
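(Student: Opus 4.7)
The plan is to dispose of the exceptional cases first and then reduce the generic case to a suite of polynomial-time verifications performed on a carefully chosen certificate. I would handle $M\cong T^2\times I$ and $M\cong K\ttimes I$ by directly appealing to Haraway-Hoffman's \NP{} recognition procedures for $\Sigma\times I$ and $\Sigma\ttimes I$, which also output the base surface, and handle the solid torus case $M\cong S^1\times D^2$ via Lemma~\ref{lemma:solidtorusdata}. In each of these exceptional cases the section and boundary fibre are either trivially present or directly supplied by the invoked recognition result, so the ``furthermore'' clause holds vacuously on the excluded manifolds.

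In the generic case, the certificate I would prescribe consists of a normal surface vector for a fundamental horizontal surface $F$, which by Proposition~\ref{prop:fundamentalhorizontalsurface} must be a section since $M$ has no singular fibres and which has bit-length polynomial in $|\mathcal{T}|$ by Lemma~\ref{lemma:fundamentalsurfacebound}; a normal curve vector $\gamma_i$ on each boundary torus $T_i$ representing a prospective fibre; and the complete collection of vertical normal annuli supplied by Corollary~\ref{corr:verticalannulicollectiontriangulation}.

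The verifier then carries out the following in time polynomial in $|\mathcal{T}|$: determine the topology $\Sigma$ of $F$ using Proposition~\ref{prop:agolhassthurstonnormalsurface} and check that $\partial F$ meets each boundary torus in a single simple closed curve; cut along $F$ via Corollary~\ref{corr:cutalongsurfacestriangulation} to obtain a bounded-size triangulation of $M_0 := M\backslash\backslash F$; certify $M_0\cong \Sigma\times I$ using Haraway-Hoffman's thickened-surface recognition; and apply Corollary~\ref{corr:intersectionnumberpolynomialtime} to confirm that each $\gamma_i$ meets $\partial F\cap T_i$ in exactly one point. Taken together, these checks realise $M$ as the mapping torus of some self-homeomorphism $\phi\colon \Sigma\to\Sigma$ whose restriction to $\partial\Sigma$ is the identity, compatible with the prescribed boundary fibration.

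The main obstacle is to rule out non-trivial interior monodromy for $\phi$: Dehn twists along curves in the interior of $\Sigma$ would yield mapping tori that are not bundles yet have the same boundary behaviour. I plan to address this by having the verifier also cut along the vertical annuli, which together with $F$ should decompose $M$ into a collection of 3-balls whose face identifications are combinatorially compatible with the cell decomposition of $\Sigma\times S^1$ (or $\Sigma\ttimes S^1$ when $\Sigma$ is non-orientable) obtained by lifting a polygonal decomposition of $\Sigma$ along $F$. Verifying that each piece is a 3-ball and that the gluing pattern matches this combinatorial model will finish the \NP{} verification; the \FNP{} output is the homeomorphism type of $F$.
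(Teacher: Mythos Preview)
Your proposal has a genuine gap precisely where you flag it: the monodromy obstruction is real, and your proposed resolution (``check that the gluing pattern matches the combinatorial model of $\Sigma\times S^1$'') is not a specified polynomial-time procedure. After cutting along both $F$ and the annuli you do get a single 3-ball, but the face identifications encode exactly the monodromy $\phi$, and you have not explained what invariant of these identifications you would compute, or why it detects $\phi\simeq\mathrm{id}$ in polynomial time. Note also that your earlier checks are circular for the ``furthermore'' clause: verifying that $\partial F$ meets each boundary torus once and that $M\backslash\backslash F\cong\Sigma\times I$ does not certify that $F$ is a section or that $\gamma_i$ is a fibre unless you already know $M$ is a circle bundle.

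The paper sidesteps the monodromy issue entirely by reversing the roles of the horizontal and vertical data. Its primary decomposition is along the vertical annuli $C$: one certifies that $M\backslash\backslash C$ is a solid torus and, via intersection numbers with a fundamental meridian disc, that each component of $\partial C$ is a longitude. This alone forces $M$ to be a circle bundle, because any gluing of longitudinal annuli on $\partial(S^1\times D^2)$ yields an $S^1$-bundle over the quotient surface (Dehn twists in the gluing maps can be absorbed since the base has boundary). The section $F$ is then certified \emph{relative to the already-established fibration}: one checks $i(\partial F,\partial A)=\pm 1$ for each annulus $A$, so that after compressions $F$ becomes horizontal of some degree, and then the equality $\chi(F)=1-n$ (with $n=|C|$) rules out any compressions and pins the degree to one. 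The fibres on $\partial M$ are simply boundary curves of the annuli. If you want to salvage your approach, the cleanest fix is to adopt this order of operations: use the vertical annuli you already included in your certificate to certify the bundle structure first, and use $F$ only to read off the base surface.
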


Haraway and Hoffman have previously shown that certifying $K\ttimes I$ and $T^2\times I$ is in NP~\cite{HarawayHoffman}; these are the orientable circle bundles over the M\"obius band and the annulus respectively.

\begin{proof}[Proof of Proposition~\ref{prop:circlebundlerecognitionNP}]
Let $\mathcal{T}$ be a triangulation of $M$.
The data given in the certificate is the following:
\begin{enumerate}
    \item A compatible orientation of each tetrahedron;
    \item If $M$ is the solid torus, a fundamental normal disc and a curve in $\del M$ intersecting it once;
    \item If $M$ is $T^2\times I$, a normal annulus $F$ of edge weight at most exponential in a polynomial of $\norm{\mathcal{T}}$, a triangulation of $M\backslash\backslash F$ constructed using the algorithm in Lemma~\ref{lemma:cutalongsurfacestriangulation}, and a fundamental normal essential disc in this triangulation;
    \item If $M$ is $K\ttimes I$ (which we consider to have Seifert structure $S\ttimes S^1$, where $S$ is the M\"obius band), a fundamental annulus $F$, and:
        \begin{enumerate}
            \item If the fundamental annulus is horizontal, the triangulation of $M\backslash\backslash F\cong S^1\times D^2\sqcup S^1\times D^2$ from Lemma~\ref{lemma:cutalongsurfacestriangulation} and a fundamental normal essential disc in each component of the result;
            \item If the fundamental annulus is vertical, the triangulation of $M\backslash\backslash F\cong S^1\times D^2$ from Lemma~\ref{lemma:cutalongsurfacestriangulation} and a fundamental normal essential disc in it;
        \end{enumerate}
    \item Otherwise:
        \begin{enumerate}
            \item  a collection of non-isotopic normal vertical annuli $F$, of total edge weight at most exponential in $\norm{\mathcal{T}}^2$, whose complement is a solid torus;
            \item the triangulation of $M\backslash\backslash F$ from Lemma~\ref{lemma:cutalongsurfacestriangulation};
            \item a fundamental normal essential disc in this triangulation of $M\backslash\backslash F$;
            \item a fundamental normal section (that is, as $M\cong \Sigma\pmttimes S^1$, a normal surface in $M$ isotopic to $\Sigma\times\{*\}$).
        \end{enumerate}
\end{enumerate}

\setcounter{claimcounter}{0}
\begin{claim}
    The data of this certificate exists and has size bounded by a polynomial in $\norm{\mathcal{T}}$.
\end{claim}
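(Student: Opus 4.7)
The plan is to go through each item of the certificate list and argue, using the existence theorems proved earlier in the paper, that the requisite object exists with single-exponential weight; combined with the fact that a normal surface in $\mathcal{T}$ of weight $W$ is encoded by its normal coordinate vector in $O(|\mathcal{T}|\log W)$ bits, this will establish polynomial size.

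First I would handle the easy pieces. An orientation of each tetrahedron is one bit per tetrahedron, and exists since $M$ is assumed orientable by convention; this contributes $O(|\mathcal{T}|)$ bits. For the solid torus case, Lemma~\ref{lemma:solidtorusdata} produces both a fundamental normal meridian disc and a fundamental normal curve in $\del \mathcal{T}$ intersecting it once; by Proposition~\ref{prop:generalfundamentalsolutionbounds} both have weight at most $c^{|\mathcal{T}|}$ for a universal $c$, so they are encoded in polynomial space.

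Next I would treat the three non-trivial cases in parallel. In each of them we first need a normal essential annulus (or a small collection of annuli); by Corollary~\ref{corr:verticalannulicollectiontriangulation} this can be chosen, taking vertical annuli when $M$ is neither $T^2\times I$ nor $K\ttimes I$, with total weight at most $c_T^{|\mathcal{T}|^2}$. This weight is single-exponential in a polynomial in $|\mathcal{T}|$, so its normal coordinate vector has polynomial bit length. The triangulation of the complement $M\backslash\backslash F$ is then produced by Corollary~\ref{corr:cutalongsurfacestriangulation} and has size polynomial in $|\mathcal{T}|$, $\log w(F)$, and the Euler characteristic of $F$, hence polynomial in $|\mathcal{T}|$. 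Inside this complement the certificate requires fundamental normal essential discs (one per solid torus component), which are guaranteed by Lemma~\ref{lemma:solidtorusdata}; they are fundamental in the new triangulation, so by Proposition~\ref{prop:generalfundamentalsolutionbounds} they are again of single-exponential weight in the size of the new triangulation, and thus of polynomial bit length in $|\mathcal{T}|$. In the $K\ttimes I$ case I would check both sub-cases (horizontal vs.\ vertical fundamental annulus) by the same argument, distinguishing only in the number of resulting solid-torus components.

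For the generic case the only remaining item is a fundamental normal section of the circle bundle. Since we are in a circle bundle, there are no singular fibres and $p=1$, so Proposition~\ref{prop:fundamentalhorizontalsurface} (specialised to a circle bundle over a surface with boundary different from the disc or annulus or M\"obius band, the other cases being already covered above) produces a horizontal degree-$1$ surface realised as a fundamental normal surface in $\mathcal{T}$; the bound from Proposition~\ref{prop:generalfundamentalsolutionbounds} again puts its weight within single-exponential of $|\mathcal{T}|$ and hence its encoding within polynomial bit length.

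The main obstacle I anticipate is the bookkeeping in the generic case: one must check that the single fundamental section in the original triangulation $\mathcal{T}$, together with the separately produced collection of vertical annuli (from the statement, these live in $\mathcal{T}$ as well), together with the disc produced in the solid-torus complement $M\backslash\backslash F$, all fit together as a genuine certificate of size polynomial in $|\mathcal{T}|$. Each ingredient's bound has already been proved, so the verification reduces to assembling them and applying the encoding observation that weight $W$ costs $O(|\mathcal{T}|\log W)$ bits; the bounds $c^{|\mathcal{T}|}$ and $c_T^{|\mathcal{T}|^2}$ are both single-exponential in a polynomial, which is exactly what is needed.
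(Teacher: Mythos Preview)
Your proposal is correct and follows essentially the same approach as the paper's own proof: go case by case, invoke the existence results already established (Lemma~\ref{lemma:solidtorusdata}, Corollary~\ref{corr:verticalannulicollectiontriangulation}, Corollary~\ref{corr:cutalongsurfacestriangulation}, Proposition~\ref{prop:fundamentalhorizontalsurface}), and observe that single-exponential weight gives polynomial encoding length. The only cosmetic difference is that for the $T^2\times I$ and $K\ttimes I$ cases the paper cites Lemma~\ref{lemma:oneessentialannulus} directly for a single fundamental annulus, whereas you invoke Corollary~\ref{corr:verticalannulicollectiontriangulation} uniformly; both yield the needed bound, and your explicit bit-counting remark is a useful addition the paper leaves implicit.
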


\begin{claimproof}
    When $M$ is a solid torus, the data exists by Lemma~\ref{lemma:solidtorusdata}.
    
    When $M$ is $T^2\times I$, by Lemma~\ref{lemma:oneessentialannulus} there is a normal annulus $F$ of the required size.
    As the edge weight of $F$ is at most exponential in $\norm{\mathcal{T}}$, Lemma~\ref{lemma:cutalongsurfacestriangulation} allows us to construct a triangulation of $M\backslash\backslash F$ of polynomial size in $\norm{\mathcal{T}}$.
    By Lemma~\ref{lemma:solidtorusdata} then there is a fundamental normal essential disc in this new triangulation.

    When $M$ is $K\ttimes I$, by Lemma~\ref{lemma:oneessentialannulus} there is a fundamental normal essential annulus, which (as discussed in the proof of Proposition~\ref{prop:verticalannulicollectionhandlestructure}) is either horizontal and cuts $M$ into two solid tori, or is vertical and cuts it into one solid torus.
    Either way we can use Lemma~\ref{lemma:cutalongsurfacestriangulation} to construct a triangulation of its complement that is of polynomial size in $\norm{\mathcal{T}}$, and by Lemma~\ref{lemma:solidtorusdata} there is a fundamental normal essential disc in each component of this new triangulation.

    In the general case, by Corollary~\ref{corr:verticalannulicollectiontriangulation} there is a collection of vertical essential annuli $C$ in $\mathcal{T}$, of total edge weight at most $c_T^{\norm{\mathcal{T}}^2}$, whose complement is a single solid torus.
    By Lemma~\ref{lemma:cutalongsurfacestriangulation} we can construct a triangulation of their complement, recording the boundary of $C$, that is of size at most polynomial in $\norm{\mathcal{T}}$.
    By Lemma~\ref{lemma:solidtorusdata} there is a fundamental normal essential disc in this new triangulation.
    By Proposition~\ref{prop:fundamentalhorizontalsurface} there is a fundamental section in $M$ whose normal vector is a bitstring whose size is bounded by a polynomial in $\norm{\mathcal{T}}$ (by Lemma~\ref{lemma:triangulationfundamentalsurfacebound}).
\end{claimproof}

\begin{claim}
    The homeomorphism type of $M$ can be verified in polynomial time.
\end{claim}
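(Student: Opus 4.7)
The plan is to run a polynomial-time verification procedure that, in each of the five certificate cases, checks the topological type of the listed normal surfaces, cuts $\mathcal{T}$ along them, and then certifies the resulting pieces are solid tori and that a section (when present) has the correct intersection behaviour with the fibres.

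First, apply Proposition~\ref{prop:agolhassthurstonnormalsurface} to each normal surface vector in the certificate to determine its topological type (disc, annulus, Möbius band, or section) in time polynomial in $|\mathcal{T}|$ and $\log w$, where $w$ is the weight. Since every surface in the certificate has weight at most $c_T^{|\mathcal{T}|^2}$ by Corollary~\ref{corr:verticalannulicollectiontriangulation}, $\log w$ is polynomial in $|\mathcal{T}|$, so this step runs in polynomial time. Fundamentality of the small surfaces is certified by the standard linear-programming bound underlying Lemma~\ref{lemma:fundamentalsurfacebound}. Then iteratively apply Corollary~\ref{corr:cutalongsurfacestriangulation} to cut $\mathcal{T}$ along the certified surfaces, producing a triangulation of the complement whose size remains polynomial in $|\mathcal{T}|$.

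Next, certify that each component of the final complement is a solid torus. Each such component comes with a fundamental normal essential disc $D$ from the certificate. Using Corollary~\ref{corr:intersectionnumberpolynomialtime}, verify on the induced triangulation of the boundary torus that $\partial D$ has algebraic intersection one with some specified curve (the curve from the certificate in the solid-torus case, or a boundary component of a previously cut annulus or section in the other cases). This certifies that $\partial D$ is essential. Cut along $D$ using Corollary~\ref{corr:cutalongsurfacestriangulation} to obtain a piece with $S^2$ boundary, cap it with a tetrahedron, and invoke Schleimer's polynomial-size NP certificate for 3-sphere recognition to check the result is $S^3$. This chain certifies each component is a genus-one handlebody, hence a solid torus.

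Finally, in the general case, use Corollary~\ref{corr:intersectionnumberpolynomialtime} to check that the fundamental normal section intersects a fibre on each boundary torus algebraically once. Combined with the vertical-annulus decomposition into a solid torus, this witnesses the circle-bundle structure. The Euler characteristic, number of boundary components, and orientability of the section (computed via Proposition~\ref{prop:agolhassthurstonnormalsurface}) determine the base surface $\Sigma$, so $M$ is identified as the orientable circle bundle over $\Sigma$ with non-empty boundary. The exceptional cases ($S^1\times D^2$, $T^2\times I$, $K\ttimes I$) are handled by the same solid-torus verification together with a check on whether the certified annulus was horizontal or vertical, which pins down the particular bundle structure. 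The main obstacle is step three: cleanly chaining disc essentiality, cutting, and ball-recognition (via 3-sphere recognition) to certify each complement is a solid torus; the remaining verifications are bookkeeping on intersection numbers and Euler characteristics.
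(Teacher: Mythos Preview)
Your outline follows essentially the same strategy as the paper: verify the topology of the certificate surfaces with Agol--Hass--Thurston, cut using Corollary~\ref{corr:cutalongsurfacestriangulation}, certify the pieces are solid tori, and compare intersection numbers. Two points deserve comment.

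First, there is a genuine gap in your handling of the exceptional cases. You write that $T^2\times I$ and $K\ttimes I$ are distinguished ``by a check on whether the certified annulus was horizontal or vertical''. But at verification time you do not have a fibration to compare against: an essential annulus $F$ in $T^2\times I$ and a \emph{vertical} essential annulus in $K\ttimes I$ both have connected complement a single solid torus in which the two boundary curves of $F$ are longitudes (intersection $\pm 1$ with the meridian). So the solid-torus data and intersection numbers alone do not separate these two manifolds. The paper resolves this by computing $H_1(M;\ZZ)$ in polynomial time (Smith normal form), which is $\ZZ^2$ for $T^2\times I$ and $\ZZ\oplus\ZZ_2$ for $K\ttimes I$; you need some global invariant of this kind.

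Second, two smaller differences. For solid-torus recognition you propose cutting along the essential disc, capping, and invoking Schleimer's $S^3$ certificate; the paper instead cites Ivanov's result directly. Your route works but is essentially a re-derivation of Ivanov's argument. In the general case, you check that the putative section meets ``a fibre on each boundary torus'' once; the paper checks intersection with the boundary curves of \emph{every} annulus in the collection $F$ (not just those lying on $\partial M$), and then matches the Euler characteristic $\chi(\Sigma)=1-n$. Both ingredients are needed: the annulus intersections force $\Sigma$ to be horizontal after compressions, and the Euler-characteristic equality forces degree one. Checking only the boundary tori of $M$ is not obviously sufficient to pin down horizontality.
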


\begin{claimproof}
Check that $M$ has boundary; that is, that some face of the triangulation is not identified with any other.
Check that the given orientations of the tetrahedra are compatible, and thus certify that $M$ is orientable.

If $M$ is a solid torus, it is known by work of Ivanov that recognising it is in $\NP$~\cite{Ivanov}.
To certify the data from Lemma~\ref{lemma:solidtorusdata}, check that the given surface is in fact a disc using Proposition~\ref{prop:agolhassthurstonnormalsurface} then check (using Lemma~\ref{lemma:intersectionnumberpolynomialtime}) that the algebraic intersection number of the curve and the boundary of the disc, given as normal curves, is $\pm 1$.
We thus know that the curve and disc are essential and hence must be a fiber and meridian disc.

If $M$ is $T^2\times I$, certify that $F$ is an annulus using Proposition~\ref{prop:agolhassthurstonnormalsurface}.
Apply Lemma~\ref{lemma:cutalongsurfacestriangulation} to produce a triangulation of $M\backslash\backslash F$ and a record of the normal curves in the triangulation from $\del F$.
We have already seen that we can certify that $M\backslash\backslash F$ is a solid torus.
Compute the algebraic intersection numbers of the boundary curves from $F$ with the boundary of the meridian disc, which (as the meridian curve is from a fundamental normal surface so contains at most an exponential number of arcs in $\norm{\mathcal{T}}^2$) we can do in polynomial time by Lemma~\ref{lemma:intersectionnumberpolynomialtime}.
Verify that they are each $\pm 1$, so the core curve of $F$ in $M\backslash\backslash F$ is a longitude.
The mapping class group of the annulus up to non-$\del$-preserving isotopy has two elements: the class of the identity, and the class that exchanges the two boundary components, so $M$ is either (in the former case) $T^2\times I$ or (in the latter) $K\ttimes I$.
The first homology of $T^2\times I$ is $\ZZ^2$ and that of $K\ttimes I$ is $\ZZ\times\ZZ_2$, so it suffices to compute the homology of $M$, which, as the dimension of $M$ is fixed, can be done in polynomial time by work of Iliopoulos~\cite{ComputingSmithNormalForm} as explained in~\cite[Problem 33]{KaibelPfetsch}.

Suppose $M$ is $K\ttimes I \cong S\ttimes S^1$ where $S$ is the M\"obius band.
If the essential annulus $F$ is horizontal then it covers the M\"obius band, so as it has two boundary components it is a degree two cover and separates $M$ into two copies of $S\ttimes I$ (where it is the horizontal boundary of this bundle), which is the solid torus.
As in the $T^2 \times I$ case, we can check (using the certificate) that this fundamental surface separates $M$ into two solid tori.
We can use the normal curve vectors for the boundary of $F$ and the boundaries of the meridian discs to verify with Lemma~\ref{lemma:intersectionnumberpolynomialtime} that these curves have algebraic intersection number $\pm 2$.
Up to choice of coordinates there is only one curve on the boundary of the solid torus that intersects a meridian disc twice, so this is enough to certify that $\mathcal{T}$ is a triangulation of $K\ttimes I$.

If $F$ is vertical, it sits over a spanning arc in the M\"obius band and cuts $M$ into a single solid torus.
As in the $T^2\times I$ case, certify that $M\backslash\backslash F$ is a solid torus, and produce a normal vector for a meridian curve.
Compute the algebraic intersection numbers of the boundary of the two copies of $F$ with this meridian curve.
We find that they are $\pm 1$ and by the same homology computation as in the $T^2\times I$ case we can certify that $M$ is $K\ttimes I$.

We are left with the general case.
By Proposition~\ref{prop:agolhassthurstonnormalsurface} we can quickly check that each surface in $F$ is an annulus and compute the number of components $n$ in $F$.
As we have already described, we can certify that $M\backslash\backslash F$ is a solid torus and that the given surface is indeed a meridian disc, and record the corresponding meridian curve.
Compute the intersection number of each annulus boundary and this meridian curve (using Lemma~\ref{lemma:intersectionnumberpolynomialtime}) to certify that the annulus core curves are longitudes in the solid torus.
Now, when we glue up the annuli we will get a circle bundle over a surface $\Phi$.
This surface has Euler characteristic $1-n$.
Note that as $M$ is not a circle bundle over a disc, annulus or M\"obius band, $1-n$ is negative.

Let $\Sigma$ be the normal surface that we claim is a normal section.
Verify (using Proposition~\ref{prop:agolhassthurstonnormalsurface}) that $\chi(\Sigma) = 1-n$.
Compute algebraic intersection numbers (using Lemma~\ref{lemma:intersectionnumberpolynomialtime}) to certify that the boundary curve of $\Sigma$ intersects each boundary curve of each annulus in $A$ exactly once, which implies that $\Sigma$ intersects each annulus in one spanning arc and possibly some trivial curves.
This shows that, after compressions and $\del$-compressions (which increase Euler characteristic), $\Sigma$ is horizontal; as $\chi(\Phi) < 0$ and $\chi(\Sigma) = \chi(\Phi)$, $\Sigma$ must be a degree one horizontal surface: that is, a section.
\end{claimproof}

For the collection of fibers, on each boundary component of $M$, take one boundary component of one of the essential vertical annuli.
\end{proof}

\section{Recognising Seifert fibered spaces with boundary is in NP}
\label{section:SFSrecognition}

\begin{prop}
Deciding if a 3-manifold $M$ with triangulation $\mathcal{T}$ is a Seifert fibered space with non-empty boundary and (a non-zero number of) singular fibers of only multiplicity two, other than $S^1\times D^2$, $T^2\times I$ and $K\ttimes I$, is in \NP.
When $M$ is such a Seifert fibered space, deciding if it admits a certain set of Seifert data is in \NPcoNP{}, and there is a degree two normal horizontal surface and one normal fiber on each boundary component such that these properties can be certified in polynomial time.
\label{prop:recognisemultiplicitytwo}
\end{prop}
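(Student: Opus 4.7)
The plan is to extend the certification strategy of Proposition~\ref{prop:circlebundlerecognitionNP} by replacing the horizontal section with a degree-two horizontal surface witnessing the multiplicity-two singular fibration structure. The certificate I would provide for $(\mathcal{T}, M)$ is the following: a compatible orientation of each tetrahedron; the complete collection of disjoint non-isotopic normal essential vertical annuli $A$ guaranteed by Corollary~\ref{corr:verticalannulicollectiontriangulation}; the triangulation of $M\backslash\backslash A$ produced by Corollary~\ref{corr:cutalongsurfacestriangulation} together with normal curve vectors recording the two copies of each component of $\del A$; a fundamental normal meridian disc in each component of $M\backslash\backslash A$; a fundamental degree-two horizontal normal surface $\Sigma$ supplied by Proposition~\ref{prop:fundamentalhorizontalsurface}; and a normal fibre curve on each boundary component of $M$. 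Each piece of data has weight at most exponential in $|\mathcal{T}|^2$, so its bit-length is polynomial in $|\mathcal{T}|$.

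Verification would proceed as follows. First, use Proposition~\ref{prop:agolhassthurstonnormalsurface} to check that each component of $A$ is an annulus and each candidate meridian is a disc; then apply Ivanov's certificate for the solid torus on each component of $M\backslash\backslash A$ using the supplied meridians. Next, compute via Corollary~\ref{corr:intersectionnumberpolynomialtime} the algebraic intersection numbers of each boundary curve of $A$ with the meridian curve of the solid torus it sits on. A solid torus component is to be interpreted as a neighbourhood of a singular fibre exactly when its single boundary annulus of $\del A$ meets the meridian with algebraic intersection $\pm 2$; the other intersection numbers must then each equal $\pm 1$, certifying that all other annuli of $A$ are vertical in the trivial part of the fibration. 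In each singular-fibre solid torus the intersection data then determines the slope of the annulus boundary relative to the meridian, and hence the Seifert invariant $(2, q_i)$. Gluing the solid tori back together using the adjacency graph of the $A$-decomposition reconstructs the base orbifold $G$, and we may read off the full Seifert data from these slopes.

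To verify that the putative data does come from a Seifert fibered structure (rather than merely being a decomposition whose slopes look correct), use $\Sigma$. Check with Proposition~\ref{prop:agolhassthurstonnormalsurface} that $\chi(\Sigma)$ equals $2\chi(G) - \sum_i(p_i-1)$, as computed from the reconstructed $G$ and singular multiplicities; check with Corollary~\ref{corr:intersectionnumberpolynomialtime} that $\del\Sigma$ crosses each non-singular annulus of $A$ in two spanning arcs and each singular fibre meridian disc in one arc. Since compressing or $\del$-compressing $\Sigma$ only increases Euler characteristic and $\chi(\Sigma) < 0$ by the analogue of Claim~\ref{claim:fundhoriz:negativeeulerchar}, and since the intersection pattern prohibits $\Sigma$ from being pseudo-vertical, $\Sigma$ must be horizontal of degree $2$, confirming the fibration. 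Existence of all these certificates follows from Proposition~\ref{prop:fundamentalhorizontalsurface} and Corollary~\ref{corr:verticalannulicollectiontriangulation}, and the polynomial-time verification outlined above places the problem in \NP{} and \FNP{}.

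The main obstacle I expect is bookkeeping orientations and slopes consistently: normal curves and surfaces do not come with preferred orientations, so matching signs across the different intersection computations to extract a single coherent set of Seifert invariants takes some care. The other delicate step is confirming that the intersection pattern of $\del\Sigma$ with $\del A$, together with $\chi(\Sigma) = 2\chi(G) - \sum(p_i-1)$, really forces $\Sigma$ to be a horizontal degree-two cover rather than a coincidental sum of lower-complexity incompressible surfaces; here Lemma~\ref{lemma:nonorientablesurfacesinSFS} combined with the spanning-arc count on each annulus of $A$ is what rules out the pseudo-vertical alternative.
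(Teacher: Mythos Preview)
Your approach is close in spirit to the paper's but differs in one structural way that creates a gap. The paper does \emph{not} cut $M$ all the way into solid tori. Instead, the annuli $C$ in its certificate are only those separating off the singular-fibre neighbourhoods, so that $M\backslash\backslash C$ consists of $n$ solid tori together with one circle bundle $M'$ over a surface. The paper then invokes Proposition~\ref{prop:circlebundlerecognitionNP} as a black box on $M'$; that proposition supplies, in particular, a certified \emph{section} of $M'$ and hence the full homeomorphism type of the base surface. Your certificate instead takes the complete collection from Corollary~\ref{corr:verticalannulicollectiontriangulation}, cuts $M$ into all solid tori, and then asserts that ``gluing the solid tori back together using the adjacency graph of the $A$-decomposition reconstructs the base orbifold $G$''. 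But the adjacency graph together with the $\pm 1$ intersection numbers only gives you $\chi(G)$ and the number of boundary components; it does not by itself determine whether the base surface is orientable, and hence does not pin down the Seifert data. You would need either to track the gluing orientations through Corollary~\ref{corr:cutalongsurfacestriangulation} carefully, or to compute a homology invariant, or (most cleanly) to do what the paper does and keep $M'$ as one piece so that Proposition~\ref{prop:circlebundlerecognitionNP} hands you the base surface directly. Without one of these, the \FNP{} claim is incomplete.

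A smaller point: your check that ``$\partial\Sigma$ crosses \ldots\ each singular fibre meridian disc in one arc'' does not make sense as written, since $\partial\Sigma\subset\partial M$ while the meridian discs live in the interior solid tori. What you need (and what the paper checks) is only that the algebraic intersection of $\partial\Sigma$ with each annulus boundary curve is $\pm 2$; together with the Euler-characteristic equality $\chi(\Sigma)=2\chi(G)-n$ this forces $\Sigma$ to be degree-two horizontal, exactly as you outline in your final paragraph.
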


\begin{proof}
The data given in the certificate is the following:
\begin{enumerate}
    \item the Seifert data of $M$;
    \item a collection of disjoint normal essential vertical annuli $C$ in $\mathcal{T}$, of total edge weight at most $c^{\norm{\mathcal{T}}^2}$, for a fixed constant $c$, where the complement of $C$ is the union of a solid torus neighbourhood of each singular fiber and a circle bundle over a surface;
    \item \label{enum:mult21} a triangulation of $\mathcal{T}\backslash\backslash C$ from Lemma~\ref{lemma:cutalongsurfacestriangulation}, with a record of the normal curves from the boundaries of the components of $C$;
    \item the data from the certificate in Proposition~\ref{prop:circlebundlerecognitionNP} for $\mathcal{T}\backslash\backslash C$;
    \item \label{enum:mult22} a fundamental normal essential disc in each solid torus component of $\mathcal{T}\backslash\backslash C$;
    \item a degree two horizontal fundamental normal surface $F$ in $\mathcal{T}$;
\end{enumerate}

\setcounter{claimcounter}{0}
\begin{claim}
    The data of this certificate exists and is of size at most polynomial in $\norm{\mathcal{T}}$.
\end{claim}

\begin{claimproof}
Corollary~\ref{corr:verticalannulicollectiontriangulation} gives us the collection of normal vertical annuli.
Use Lemma~\ref{lemma:cutalongsurfacestriangulation} to build the required triangulation, and then by Lemma~\ref{lemma:solidtorusdata} we can find a fundamental normal essential disc in each solid torus component of this triangulation.
Proposition~\ref{prop:circlebundlerecognitionNP} gives us the certificate for $\mathcal{T}\backslash\backslash C$.
By Proposition~\ref{prop:fundamentalhorizontalsurface}, the desired degree two horizontal fundamental normal surface exists, and by Lemma~\ref{lemma:triangulationfundamentalsurfacebound} its normal vector is a bitstring of length bounded by a polynomial in $\norm{\mathcal{T}}$.
\end{claimproof}

\begin{claim}
    The homeomorphism type of $M$ can be verified from the certificate in polynomial time.
\end{claim}

\begin{claimproof}
As in Proposition~\ref{prop:circlebundlerecognitionNP}, certify that $M$ has boundary and is orientable.

Build the triangulation of $\mathcal{T}\backslash\backslash C$ using Lemma~\ref{lemma:cutalongsurfacestriangulation}, verifying that it agrees with the one in the certificate.
We can then certify that all but one of the resulting pieces are solid tori by work of Ivanov~\cite{Ivanov}, and certify the homeomorphism type of the remaining piece $M'$ using Proposition~\ref{prop:circlebundlerecognitionNP}.
By computing algebraic intersection numbers using Lemma~\ref{lemma:intersectionnumberpolynomialtime}, we can check that for each of the solid torus meridian discs, each annulus (algebraically) intersects it twice or not at all. 
We thus know that $M$ is a copy of $M'$ with solid tori glued on by gluing a $(1, 2)$ slope curve in the boundary of the solid torus to a fiber of $M'$; this is enough to certify that these solid tori are neighbourhoods of singular fibers of multiplicity two.
\end{claimproof}

We know that the boundary curves of the annuli are fibers by construction, so take one on each boundary component.
It remains to certify that $F$ is a degree two horizontal surface.
Check that the algebraic intersection number of $\del F$ with each annulus is $\pm 2$ using Lemma~\ref{lemma:intersectionnumberpolynomialtime}, and deduce that $F$ compresses and $\del$-compresses to a degree two horizontal surface by Lemma~\ref{lemma:nonorientablesurfacesinSFS}, so $\chi(F) \leq 2\chi(\Sigma)-n$.
Now check that $\chi(F) = 2\chi(\Sigma)-n$, so we could not have done any compressions or boundary compressions since they increase Euler characteristic.
\end{proof}

We are now almost ready to prove these recognition results for general Seifert fibered spaces with boundary: that is, when we have singular fibers of multiplicities other than two.
We need two more results: a theorem from previous work of the author and a result about computing singular fiber data in Seifert fibered spaces.

\begin{theorem}[Theorem 1.2~\cite{JacksonTriangulationComplexity}]
    \label{thm:simplicialfibers}
    Let $M$ be a Seifert fibered space with non-empty boundary and let $\mathcal{T}$ be a triangulation of $M$.
    The collection of singular fibers of $M$ that are not of multiplicity two have disjoint simplicial representatives in $\mathcal{T}^{(79)}$, the 79\textsuperscript{th} barycentric subdivision of $\mathcal{T}$.
    In $\mathcal{T}^{(82)}$, these simplicial singular fibers have disjoint simplicial solid torus neighbourhoods such that there is a simplicial meridian curve of length at most 48 for each such neighbourhood.
\end{theorem}

\begin{lemma}
\label{lemma:singularfiberdata}
    Let $M$ be an oriented Seifert fibered space with $n$ of its singular fibers drilled out.
    Let $F$ be a degree $k$ horizontal surface in $M$, and let $\eta_i$, $1\leq i \leq n$ be the collection of its curves of intersection with the boundary component arising from drilling out the $i$\textsuperscript{th} singular fiber.
    Let $\gamma_i$, $1\leq i \leq n$, be a regular fiber on each of these boundary components.
    Pick orientations of $\eta_i$ and $\gamma_i$ such that the algebraic intersection number $i(\eta_i, \gamma_i)$ is positive with respect to the orientation of $\del M$ induced by the orientation of $M$.
    Letting $\mu_i$ be a meridian of the drilled out solid torus neighbourhood of the $i$\textsuperscript{th} singular fibre, the Seifert data of this singular fiber $q/p$ (with respect to this orientation of $M$ and the basis of $H_1(\del M, \ZZ)$ induced by $[\frac{1}{k}\eta_i]$ and $\gamma_i$ for each $i$) is $(i(\eta_i, \mu_i)/k)/i(\gamma_i,\mu_i)$.
\end{lemma}

\begin{proof}
    Work in the boundary component from drilling out the $i$\textsuperscript{th} singular fibre.
    If we flip the orientations of $\eta_i$ and $\gamma_i$ simultaneously, this ratio of intersection numbers does not change sign, so choosing $i(\eta_i, \gamma_i)$ to be positive is sufficient to determine it.
    Note that $\eta_i$ is $k$ copies of a curve, and $(\frac{1}{k}\eta_i$, $\gamma_i)$ is a positive basis for the homology of this torus with its induced orientation.
    By the definition of the construction of Seifert fibered spaces (for example, see~\cite[Defn.\ 10.3.1]{Martelli}), as $\gamma_i$ is a regular fiber and $\frac{1}{k}\eta_i$ intersects it once, a $(p_i, q_i)$ singular fiber is when the meridian curve is $\frac{p_i}{k}\eta_i + q_i\gamma_i$.
\end{proof}

Recall the results we wish to prove.

\SFSrecognitionNP*

\SFSnamingNP*

\begin{proof}[Proof of Theorems~\ref{thm:SFSrecognitionNP} and~\ref{thm:SFSnamingNP}]
Let $M$ be a Seifert fibered space with non-empty boundary.
Our certificate will be of the following form.
First, it will be the Seifert data of $M$.
If $M$ is a circle bundle over a surface or a Seifert fibered space with only multiplicity two singular fibers, it will be the certificate from Proposition~\ref{prop:circlebundlerecognitionNP} or Proposition~\ref{prop:recognisemultiplicitytwo}, respectively.
Otherwise, it will be:
\begin{enumerate}
    \item a compatible orientation of each tetrahedron of $\mathcal{T}$;
    \item the triangulation $\mathcal{T}^{(82)}$, constructed by subdividing each tetrahedron of $\mathcal{T}$ in the order given;
    \item \label{enum:removesingularfibers} the non-multiplicity-two singular fibers in $\mathcal{T}^{(82)}$, a solid torus neighbourhood of each one, a meridian disc for it with boundary of length at most 48, a longitude curve in its boundary that intersects the meridian once, a triangulation $\mathcal{T}'$ of $\mathcal{T}^{(82)}$ with these neighbourhoods of singular fibers removed, meridian curves of length at most 48 marked, and a compatible choice of orientations of the tetrahedra of $\mathcal{T}'$;
    \item if there are singular fibers of multiplicity two, the certificate from  Proposition~\ref{prop:recognisemultiplicitytwo} for $\mathcal{T}'$, or otherwise, the certificate from Proposition~\ref{prop:circlebundlerecognitionNP} for $\mathcal{T}'$.
\end{enumerate}

It is straightforward to see that the certificate exists, as giving a triangulation of $\mathcal{T}^{(82)}$ is constructive, and Theorem~\ref{thm:simplicialfibers} gives us the required singular fiber neighbourhoods.

To verify the certificate, first, as discussed in the proof of Proposition~\ref{prop:circlebundlerecognitionNP}, check that $M$ is orientable and has boundary.
One definition of Seifert fibered spaces is that they are the orientable manifolds built as follows.
Take a circle bundle over a surface with boundary, which always has toroidal boundary, and glue solid tori into some of the boundary components so that the meridian of each solid torus is glued to some essential curve in the boundary that is \emph{not} a fibre of the circle bundle; that is, to some $q/p$ slope with $p\not= 0$.
To show that $M$ is a Seifert fibered space, we will show that it can be constructed in this manner. 

The first step is thus to check that, in the given Seifert data, for each singular fibre fraction $q/p$, that $p$ is not zero and hence that this is a valid set of data for a Seifert fibered space.

Construct $\mathcal{T}^{(82)}$ by barycentrically subdividing the tetrahedra in order and verify that it agrees with the given triangulation.
Certify that the removed regions are solid tori using Proposition~\ref{prop:circlebundlerecognitionNP} and use Proposition~\ref{prop:agolhassthurstonnormalsurface} to verify that the given meridian discs are indeed discs.
Check that the longitude intersects the meridian curve once for each singular fiber neighbourhood using Lemma~\ref{lemma:intersectionnumberpolynomialtime}, thus certifying that the given meridian discs are essential.

Note that $\mathcal{T}'\not\cong K\ttimes I$ as $M'$ has more than one boundary component.
Also, $\mathcal{T}'\not\cong T^2\times I$ as the only Seifert fibered structure for $T^2\times I$ is as $A\times S^1$, where $A$ is the annulus, but then we can only have removed one singular fiber, so $M$ was a solid torus to begin with.
As we have the data of Proposition~\ref{prop:circlebundlerecognitionNP} or Proposition~\ref{prop:recognisemultiplicitytwo} to certify the Seifert data of $\mathcal{T}'$, since $\mathcal{T}' \not\cong K\ttimes I$ or $T^2\times I$, take the normal horizontal surface and complete collection of fibers in the boundary contained in this certificate.
Note that the edge weight of this surface and annulus is at most $c_T^{\norm{\mathcal{T'}}^2}$, where $c_T$ is at most $10^{10^{36}}$.
In each boundary component of $M'$ that bounds a singular fiber, consider the boundary of the horizontal surface, $\eta$, and the boundary of a normal vertical annulus  fiber, $\gamma$, each of whose length is at most $c_T^{\norm{\mathcal{T}}^2}$.
Push the simplicial meridian $\nu$ off the 1-skeleton to get a normal meridian $\mu$.
The length of $\mu$ is at most the length of $\nu$ plus the total valence of the vertices of this boundary torus: that is, at most three times the number of edges in this boundary torus, which is at most $3\cdot 6\cdot 24^{82} \norm{\mathcal{T}}$.

Check that the given orientation of the tetrahedra of $\mathcal{T}'$ is in fact consistent and compute the orientation of each boundary triangle induced by it.
By Lemma~\ref{lemma:intersectionnumberpolynomialtime}, we can arbitrarily orient each of these three curves and then compute $i(\eta, \gamma)$ in polynomial time in the original input, with respect to the induced orientation of the boundary triangles.
Set the orientation of $\eta$ such that $i(\eta, \gamma)$ is positive and then compute $q' = i(\mu, \eta)$ and $p = i(\mu, \gamma)$.
If $\mathcal{T}'$ had multiplicity two fibers, $\eta$ is two copies of a horizontal curve and intersects $\gamma$ twice, so in any case, set $q = \frac{q'}{i(\eta, \gamma)}$.
By Lemma~\ref{lemma:singularfiberdata} this singular fiber has Seifert data $q/p$.
With the certificate for $M'$, this certifies the homeomorphism type of $M$, and we can check that this matches the given Seifert data.
Finally, we can verify if $M$ is homeomorphic to some other set of Seifert data in polynomial time using Lemma~\ref{lemma:SFShomeopolytime}.
\end{proof}

\appendix

\section{Normal surfaces in \ths{}s}
\label{appendix:normalsurfacesinths}

\subsection{Normalisation}
\begin{lemma}
    \label{lemma:incompressibleinsurfacetimesI}
    Let $S$ be an incompressible (connected) surface in the orientable $I$-bundle $\Sigma\pmttimes I$, other than a trivial disc or sphere, that is disjoint from the horizontal boundary and does not admit any $\del$-compression discs with respect to the vertical boundary of $\Sigma\pmttimes I$.
    Then $S$ is isotopic to $\Sigma\times \{*\}$ or its double cover.
\end{lemma}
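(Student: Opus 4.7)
The plan is to reduce to a 3-ball by cutting along vertical discs, show that each piece of $S$ in the ball is a horizontal disc, and then reassemble to conclude that $S$ projects to $\Sigma$ as a covering map.

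First, I would choose a collection of disjoint properly embedded arcs $\{\alpha_k\}$ in $\Sigma$ with endpoints on $\del\Sigma$ that cut $\Sigma$ into a single disc $D$; this is possible because $\Sigma$ has non-empty boundary. Writing $\pi:\Sigma\pmttimes I \to \Sigma$ for the bundle projection, each $D_k := \pi^{-1}(\alpha_k)$ is a properly embedded disc (the $I$-bundle is trivial over any arc), and $\pi^{-1}(D)$ is a 3-ball. I would then admissibly isotope $S$ so that it is transverse to each $D_k$ and so that $|S \cap \bigcup_k D_k|$ is minimal in its admissible isotopy class.

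Next, I would show that $S \cap D_k$ consists only of \emph{spanning arcs}, i.e.\ arcs of $D_k$ with one endpoint on each component of $\del D_k \cap \del_v(\Sigma\pmttimes I)$. An innermost closed curve $c$ of $S \cap D_k$ in $D_k$ bounds a disc $E\sub D_k$; by the incompressibility of $S$, $c$ also bounds a disc $E'\sub S$, and then $E \cup E'$ bounds a ball in $\Sigma\pmttimes I$ (which is irreducible because $\Sigma$ has non-empty boundary, hence is aspherical), yielding an isotopy that reduces $|S \cap D_k|$. Any arc of $S\cap D_k$ with both endpoints on the same component of $\del D_k \cap \del_v$ is $\del$-parallel inside the disc $D_k$; the resulting sub-disc is either a non-trivial $\del$-compression disc for $S$ disjoint from $\del_h$, contradicting the hypothesis, or is trivial and exhibits an isotopy reducing $|S \cap D_k|$.

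Finally, I would cut along $\bigcup_k D_k$ and analyse $S' := S \cap \pi^{-1}(D)$ inside the 3-ball $\pi^{-1}(D)$. Since $S'$ is incompressible in a 3-ball and is disjoint from the horizontal boundary, each of its components is a properly embedded disc whose boundary lies on the annular region $(\del_v \cup \bigcup_k D_k) \cap \del\pi^{-1}(D)$; the spanning-arc conclusion of the previous step, together with the absence of $\del$-compressions, forces each such boundary circle to be isotopic to $\del D \pmttimes \{t\}$, so each component of $S'$ is isotopic to a horizontal slice $D \pmttimes \{t\}$. Reassembling along the $D_k$ shows that $S$ is horizontal, so $\pi|_S:S\to \Sigma$ is a covering map; since $S$ is connected, this cover has degree $1$ (making $S$ isotopic to the zero section $\Sigma\pmttimes\{*\}$) or degree $2$ (possible only when the bundle is twisted, in which case $S$ is the orientation double cover of $\Sigma$). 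The main obstacle I anticipate is the last claim that each disc component of $S'$ is horizontal rather than a disc whose boundary wraps non-trivially through the cutting discs; this is a standard annulus-classification argument but must be made carefully enough to handle both product and twisted bundles uniformly.
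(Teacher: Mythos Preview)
Your approach is essentially the paper's: cut $\Sigma\pmttimes I$ along vertical discs over arcs in $\Sigma$, use incompressibility and the no-$\del$-compression hypothesis to force $S$ to meet each such disc only in spanning arcs, and conclude that in each resulting ball the pieces of $S$ are horizontal discs. The paper uses a handle decomposition of $\Sigma$ into one 0-handle and several 1-handles and argues over each handle separately, whereas you cut directly to a single disc, but the mechanics are identical; in particular, your acknowledged ``main obstacle'' is handled in the paper exactly as you anticipate, by running the same outermost-arc argument on the rectangular pieces of $\del_v(\Sigma\pmttimes I)$ to show that $\del S'$ meets those rectangles in spanning arcs as well, so that each boundary circle of $S'$ is essential in the annulus $\del D\times I$ and hence horizontal.

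One genuine omission: you assume throughout that $\Sigma$ has non-empty boundary, but the lemma as stated allows $\Sigma$ to be closed (the $\del$-compression hypothesis is then vacuous). The paper treats this case separately at the end by removing a small disc $D\subset\Sigma$, observing that $S\cap(D\times I)$ is a stack of horizontal discs by the base case, and then applying the bounded-$\Sigma$ argument to $S\cap\bigl((\Sigma\setminus D)\pmttimes I\bigr)$ after checking that minimality of $|S\cap\del(D\times I)|$ rules out $\del$-compressions there. You should add this reduction.
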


\begin{proof}
    First, suppose that $\Sigma$ is a disc.
    Then as $\Sigma\times I$ is a ball (so $S$ is two-sided) and $S$ is incompressible and hence $\pi_1$-injective, $S$ is a sphere or a disc.
    If it is a sphere, it is a trivial one.
    If $\del S$ contains a trivial curve in the vertical boundary, then $S$ must be a boundary-parallel disc.
    Thus $S$ is a disc and its boundary is essential in the vertical boundary, so it is isotopic to $\Sigma\times\{*\}$.

    For the general case, suppose that $\Sigma$ has boundary.
    Decompose $\Sigma$ into one 0-handle, which we view as a polygon, and some number of 1-handles, which we view as rectangles.
    Above each edge of the 0-handle and 1-handles in $\Sigma\pmttimes I$ is a quadrilateral with two boundary edges in the vertical boundary of $\Sigma\pmttimes I$ and two in the horizontal boundary.
    Isotope $S$ so that it is transverse to these quadrilaterals and thus intersects each quadrilateral in a collection of arcs and closed curves.
    As $S$ is incompressible, each of these closed curves bounds a disc in $S$, so by minimising the number of components of intersection between $S$ and the quadrilaterals we can assume that the intersection is only of arcs.
    Note that $S$ is disjoint from the horizontal boundary, so each of these arcs starts and ends on either the same or different vertical boundary arcs of a quadrilateral.
    If one starts and ends on the same vertical boundary arc, by taking an outermost such arc we can obtain a $\del$-compression disc for $S$ with respect to the vertical boundary, so by an isotopy we can remove this arc.
    Thus up to isotopy $S$ intersects each quadrilateral in arcs that are transverse to the product structure.
    
    Consider one of the handles of $\Sigma$, $H$, which is a disc.
    Consider the intersection of $S$ with $H\pmttimes I \cong D^2\times I$ in $\Sigma\pmttimes I$.
    We can assume (by minimising the number of components of intersection between $S$ and the quadrilaterals) that $S$ is incompressible in this copy of $D^2\times I$, so as we have already discussed, each component of it is a (trivial) sphere (which we have ruled out) or disc.
    The boundary of $H\pmttimes I$ has three parts: its horizontal boundary, from which $S$ is disjoint, quadrilaterals, and pieces of $\del_v(\Sigma\pmttimes I)$.
    For similar reasoning as with the quadrilaterals, up to isotopy $S$ also intersects the vertical boundary pieces in arcs that are transverse to the product structure.
    Thus the boundary of each disc of $S\cap H\pmttimes I$ is an essential curve in $\del_v (H\pmttimes I)$.
    Thus $S$ intersects $H\pmttimes I$ in a collection of horizontal discs for each $H$, and so intersects all of $\Sigma\pmttimes I$ in either $\Sigma\times\{*\}$ or (if $\Sigma$ is nonorientable) possibly $\Sigma\ttimes S^0$.

    If $\Sigma$ is closed, let $D$ be a disc of $\Sigma$.
    Isotope $S$ to minimise the number of components of $S\cap \del (D\times I)$.
    The intersection of $S$ with $D\times I$ is incompressible since otherwise, as $S$ is incompressible, it would not be minimal.
    For the same reason $S\cap (D\times I)$ contains no spheres or trivial discs.
    By the first part of this proof, $S\cap (D\times I)$ is a collection of discs of the form $D\times \{*\}$.
    If $S\cap (\Sigma - D)\times I$ admits a $\del$-compression disc with respect to the vertical boundary, we can use it to isotope $S$ to reduce $|S\cap \del (D\times I)|$.
    Thus by the previous part of the proof, $S\cap (\Sigma-D)\times I$ is isotopic to $(\Sigma-D)\times \{*\}$ or its double cover, which gives us the result.
\end{proof}

We wish to modify an arbitrary surface $F$ in $M$, whose boundary is disjoint from the forbidden region, by a series of admissible isotopies and normalisation moves such that the result is normal (but may not be isotopic to $F$).
Compare the following procedure to the proof of Theorem 3.4.7 in~\cite{Matveev}.
The only substantial difference is in Move 2, as we need to consider parallelity pieces that are more complicated than 2-handles.

\begin{procedure}[Normalisation]
\label{procedure:normalisation}
Let $F$ be a properly-embedded surface in $M$ that is disjoint from the forbidden region.
The \emph{weight} of $F$ is $w(F) = (p(F), b(F), |F\cap \del M|)$, which we will sort lexicographically, where $p(F)$, the \emph{plate degree}, is $|F\cap (\del\mathcal{H}^{2} \cup \del\mathcal{H^P})|$ and $b(F)$, the \emph{beam degree}, is $|F\cap \mathcal{H}^{1}|$.
We will see in Proposition~\ref{prop:normalisationterminates} that almost all the \emph{normalisation moves} which follow will reduce $w(F)$.
All isotopies in these moves are required to be admissible.

The \emph{normalisation procedure} is to perform Move 1 once, and then repeat Moves 2-7 in sequence as long as possible.

\textsc{Move 1.} Note that the boundary of the forbidden region $\mathcal{I}$ is disjoint from the boundaries of the 2-handles.
(Admissibly) isotope $F$ so that $F$ is transverse to the handle structure and $\del F$ is disjoint from the horizontal boundaries of the 2-handles.
Each 3-handle contains an open ball that is disjoint from $F$, so by expanding these balls, we can isotope $F$ to be disjoint from the 3-handles.
Discard any components of $F$ that are entirely contained in a parallelity piece.

\textsc{Move 2.} Consider each component $H\cong \Sigma\pmttimes I$ of $\mathcal{H}^{2}\cup \mathcal{H^P}$.
If any of the components of $F\cap \del H$ are trivial curves in the vertical boundary, compress $F$ along the discs in $\del H$ that these curves bound and isotope this part of $F$ off $\del H$.
Similarly, if $F\cap H$ admits a compression disc, compress $F$ along it.
If $\del_v H$ intersects $\del M$ (in which case $H$ is a parallelity piece), and $F\cap H$ admits a $\del$-compression disc $D$ that is disjoint from the horizontal boundary -- that is, where the arc of $\del D$ in $\del H$, $\alpha$, is in $\del_v H$ -- then compress along it.
Now, as each component of $F\cap \del H$ is an essential curve in $\del_v H$, up to an isotopy supported in a collar of $\del_v H$ within $H$, we can arrange that each of these curves is transverse to the induced $I$-bundle structure on $\del_v H$.

If we performed a $\del$-compression, repeat the first two steps of this move.
Finally, discard any components of $F\cap (\mathcal{H}^{2}\cup \mathcal{H^P})$ that are spheres or $\del_vH$-parallel discs.

\textsc{Move 3.} For each 1-handle $D^1\times D^2$, take a disc $D = \{*\}\times D^2$, transverse to $F$, that minimises the number of components in $D\cap F$.
We can blow a regular neighbourhood of $D$ out to be the whole 1-handle.

\textsc{Move 4.}
If any component of intersection of $F$ with a 1-handle is a tube $S^1\times I$, compress it and isotope the two resulting discs out of the 1-handle.
Similarly, if any component of intersection with a 1-handle $D^1\times D^2$ is a disc whose intersection with $D^1\times\del D^2$ is contained in a single region of $(\del M-\mathcal{I})\cap (D^1\times D^2)$, we can $\del$-compress this tunnel and isotope the pieces out of the 1-handle.

\textsc{Move 5.}
Compress any compressible pieces of $F\cap \mathcal{H}^{0}$ and discard any trivial spheres in the 0-handles.

\textsc{Move 6.}
If $F$ intersects a lake in a trivial curve, compress it and throw away the resulting $\del$-parallel disc.
If $F$ intersects a lake in an arc that starts and ends on the same component of the intersection of the lake with an island, by an isotopy (again as $\del\mathcal{I}$ intersects the cell structure on the boundary in a normal curve) push this piece $F$ off the lake and through the 1-handle.

\textsc{Move 7.} If a disc of $F\cap \mathcal{H}^{0}$ crosses a bridge twice or a bridge and an adjacent lake, isotope the portion of disk between them into the bridge.
If it crosses a lake twice, $\del$-compress the resulting tunnel along both of its intersections with the boundary of the lake. 
\end{procedure}

\begin{prop}
\label{prop:normalisationterminates}
    Let $F$ be a properly-embedded surface in a \ths{} $\mathcal{H}$.
    Applying Procedure~\ref{procedure:normalisation} terminates and the result (if non-empty) is a normal surface.
\end{prop}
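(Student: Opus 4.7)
The plan is to establish termination via a complexity argument on the weight $w(F) = (p(F), b(F), |F\cap \partial M|)$ ordered lexicographically, which lives in $\mathbb{N}^3$ and is therefore well-ordered. I would show that each of Moves 2--7 either strictly decreases $w(F)$, or strictly reduces some auxiliary quantity (such as the number of available compression discs in a parallelity piece) while preserving $w(F)$; Move 1 is performed only once and therefore plays no role in termination. Given termination, I would then verify that the non-applicability of each of Moves 2--7 directly corresponds to one of the defining conditions in Definition~\ref{defn:normalsurface}.

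The crux is Move 2 in a parallelity piece $P \cong \Sigma \pmttimes I$, where the inner loop must be shown to terminate before we can quantify its effect on the global weight. This is where Lemma~\ref{lemma:incompressibleinsurfacetimesI} is essential: once no admissible compression or $\partial$-compression against $\partial_v P$ is available, each remaining component of $F\cap P$ is a section $\Sigma\times\{*\}$ or its double cover, hence sits in a collar of $\partial_h P \subseteq \mathcal{I}$ and can be isotoped against the forbidden region. Each individual compression step strictly reduces $p(F)$ by eliminating a curve of $F\cap \partial (\mathcal{H}^2\cup \mathcal{H}^{\mathcal{P}})$, and each $\partial$-compression against $\partial M$ reduces $|F\cap \partial M|$, so Move 2 strictly decreases $w(F)$ unless it is vacuous. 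Moves 3--5 either rearrange intersections without changing $w(F)$ or reduce $b(F)$ and $|F\cap \partial M|$ by removing tubes and tunnels; Moves 6 and 7 reduce $p(F)$ by pushing bad lake arcs through 1-handles and consolidating repeated bridge crossings. Since $p(F)$ never increases and strictly drops whenever Move 2, 6, or 7 fires, each cycle either fixes $F$ or lowers $w(F)$.

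For the normality verification, I would check each clause against the terminal state. Standardness (Definition~\ref{defn:standardsurface}) follows because Move 1 establishes transversality and disjointness from 3-handles and parallelity-piece horizontal boundaries, Move 2 discards components entirely inside parallelity pieces together with spheres and $\partial_v H$-parallel discs, and Move 3 forces 1-handle intersections to be $D^1$-fibres of disc components. Condition~(1) of Definition~\ref{defn:normalsurface} is the observation that any arc of $F$ in a 1-handle with both endpoints on the same island-lake intersection would, after Move 3, project to a lake arc eliminated by Move 6. Conditions~(2) and~(3) are literally the non-applicability of Moves 6 and 7.

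The main obstacle I anticipate is the bookkeeping in Move 2: one must check that the compression and $\partial$-compression discs employed are admissible (disjoint from $\mathcal{I}$), that after the move $F$ remains properly embedded with $\partial F$ disjoint from $\mathcal{I}$, and that the inner loop terminates before the outer weight can be measured. Matveev's argument addresses the analogous step only for ordinary 2-handles of the form $D^2\times I$, and the additional ingredients are Lemma~\ref{lemma:incompressibleinsurfacetimesI} for the classification of residual sheets in a general parallelity piece and the observation that a sheet isotopic to $\Sigma\times\{*\}$ contributes a duplicate component relative to $\mathcal{I}$ and can be discarded without breaking the procedure's invariants.
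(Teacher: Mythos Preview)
Your termination argument has real gaps. The triple $w(F)=(p(F),b(F),|F\cap\partial M|)$ is not enough: Move~5 compresses pieces of $F$ inside a 0-handle and discards spheres there, which leaves all three coordinates of $w(F)$ unchanged. Likewise, in Move~2 an honest compression of $F\cap H$ in the interior of a parallelity piece $H$ need not touch $\partial_v H$ and so does not reduce $p(F)$ as you claim. The paper handles this by augmenting the complexity to a six-tuple $(p(F),b(F),|F\cap\partial M|,\gamma(F),\eta(F),n(F))$, where $\gamma$ and $\eta$ record $\sum(1-\chi)$ over the non-sphere components of $F$ in the 0-handles and in $\mathcal{H}^2\cup\mathcal{H}^{\mathcal P}$ respectively, and $n(F)$ counts components. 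Interior compressions in Move~2 drop $\eta$; compressions and sphere-discards in Move~5 drop $\gamma$ or $n$. Your gesture toward ``some auxiliary quantity'' is in the right spirit, but these must be specified and shown to be bounded below and non-increasing under the earlier moves.

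Several of your specific effect claims are also wrong. Move~6 never reduces $p(F)$: compressing a trivial lake curve drops $|F\cap\partial M|$, and pushing a returning lake arc through the adjacent 1-handle drops $b(F)$. In Move~2 a $\partial$-compression against $\partial_v H$ reduces $p(F)$ (it deletes two essential curves of $F\cap\partial_v H$), not $|F\cap\partial M|$; the paper proves this by tracking intersections with a vertical arc $\bar\alpha$. Finally, your reading of Lemma~\ref{lemma:incompressibleinsurfacetimesI} is off: a sheet of $F\cap P$ isotopic to $\Sigma\times\{*\}$ is exactly what a normal surface is supposed to look like inside a parallelity piece; it is not ``duplicate relative to $\mathcal I$'' and must not be discarded (only entire components of $F$ trapped in a parallelity piece are thrown away, and that happens once in Move~1). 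The lemma is used to conclude that after Move~2 the surface is standard in $\mathcal{H}^2\cup\mathcal{H}^{\mathcal P}$, not to eliminate sheets.
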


\begin{proof}

After Move 1 $F$ satisfies condition~\ref{defn:normalsurface:itm:3handles} of Definition~\ref{defn:normalsurface}.
If Move 2 has no effect, $F\cap H$ is incompressible and $\del$-incompressible with respect to the vertical boundary, so by Lemma~\ref{lemma:incompressibleinsurfacetimesI} $F$ satisfies condition~\ref{defn:normalsurface:itm:2handles} of Definition~\ref{defn:normalsurface}.
If Moves 3 and 4 have no effect then it satisfies condition~\ref{defn:normalsurface:itm:1handles} of Definition~\ref{defn:normalsurface}.
After Move 5 it intersects each 0-handle in discs, which is the first part of condition~\ref{defn:normalsurface:itm:0handles} of Definition~\ref{defn:normalsurface}.
Then Move 6 ensures condition~\ref{defn:normalsurface:itm:lakes} of Definition~\ref{defn:normalsurface} and Move 7 ensures condition~\ref{defn:normalsurface:itm:0handles}.
As a result, if no more moves can be performed, then $F$ is normal.

Let the normalisation complexity of $F$ be $$(p(F), b(F), |F\cap\, \del M|, \gamma(F), \eta(F), n(F)),$$ ordered lexicographically, where the new terms are $\gamma(F) = \sum_{i=1}^m(1-\chi(F_i))$ where $\{F_i\}$ is the collection of connected components of $F\cap\mathcal{H}^{0}$ that are not spheres, $\eta(F) = \sum_{i=1}^m(1-\chi(F_j))$ where $\{F_j\}$ is the collection of connected components of $F\cap(\mathcal{H}^{2}\cup \mathcal{H}^{\mathcal{P}})$ that are not spheres, and $n(F)$ is the number of connected components of $F$.
We will show that each of Moves 2-7 reduces the normalisation complexity of $F$.

For Move 2, suppose we perform a $\del$-compression on $F\cap H$.
As $\mathcal{H}$ is a \ths{}, if $\del \mathcal{I}$ intersects a component of $\del_v H\cap \del M$ then it does so in two curves or arcs.
Thus this component is either contained in $\mathcal{I}$, or (as $\del_h H$ is contained in $\mathcal{I}$) it intersects $\mathcal{I}$ in a neighbourhood of its boundary with the horizontal boundary of $H$.
If $F\cap H$ admits a $\del$-compression disc $D$ that is disjoint from the horizontal boundary, then let $\bar{\alpha}$ be a properly-embedded arc in $\del_v H$, such that $\alpha$ is a subarc of it, which is isotopic in $\del_v H$ to one of the $I$-fibers of $\del_v H$ and has minimal intersection with each of the components of $F\cap \del_v H$ -- that is, it intersects each of the essential curves in this collection once.
This is possible since $\alpha$ runs between two different components of $F\cap\del_v H$; otherwise we would be able to upgrade our $\del$-compression disc to a compression disc.
Note that $|\bar{\alpha}\cap F|$ is the degree of the projection map from $F\cap H$ to $\Sigma$.
(When $\Sigma$ is orientable, this is the number of sheets of $F$ in $H$.)

The effect of the $\del$-compression on $F\cap \del H$ is to remove from it $\del \alpha \times I$ and add to it $\alpha \times \del I$, for some small thickening $\alpha\times I$ of $\alpha$.
This reduces $|\bar{\alpha}\cap F|$ by two, and thus we see that the number of essential curves in $F\cap \del_v H$ has reduced by two, so either $p(F)$ has reduced or we reduce it in the next step of Move 2.
After this move, $F\cap H$ does not admit any $\del$-compression discs with respect to the vertical boundary of $H$.

Compressing $F$ along trivial curves in the vertical boundary of $H$ reduces $p(F)$.
Compressing $F\cap H$ reduces $\eta(F)$ and fixes the earlier terms in the normalisation complexity.
We can thus see that we either reduce $p(F)$ or reduce $\eta(F)$ and fix the earlier terms in the normalisation complexity.

If Move 3 is non-trivial then it does not increase $p(F)$ and reduces $b(F)$, and the same applies for Move 4.
Move 5 does not change $p(F)$ or $b(F)$. If there are compressible pieces then it reduces $\gamma(F)$, and otherwise if all the pieces are 2-spheres or discs then it reduces $n(F)$ and fixes everything else.
Move 6 either reduces $|F\cap \del M|$ and does not change $p(F)$ and $b(F)$, or fixes $p(F)$ and reduces $b(F)$.
Move 7 either decreases $p(F)$ or fixes it and reduces $b(F)$.
As a result, the procedure terminates with a normal surface.
\end{proof}

\begin{defn}
    A properly-embedded disc in a 3-manifold $M$ is \emph{essential} if its boundary does not bound a disc in $\del M$.
\end{defn}

Note that this definition does not change if there is forbidden region.

\begin{lemma}
Suppose that $M$ is irreducible with a \ths{} $\mathcal{H}$ such that there is some essential disc in $M$ that avoids the forbidden region.
Then there is a normal essential disc, disjoint from the forbidden region.
\end{lemma}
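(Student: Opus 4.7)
The plan is to start with an essential disc $D$ in $M$ disjoint from $\mathcal{I}$ (which exists by hypothesis) and run Procedure~\ref{procedure:normalisation} on $D$, showing that at every step we retain an essential disc component that is still disjoint from $\mathcal{I}$. By Proposition~\ref{prop:normalisationterminates} the procedure terminates with a normal surface, so if essentiality is preserved throughout, the output contains the required normal essential disc.

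The analysis splits the normalization moves into three kinds. The pure isotopy portions of the moves (Move 1, the 1-handle straightening in Move 3, the isotopy parts of Moves 6 and 7) are admissible, so they trivially preserve essentiality and disjointness from $\mathcal{I}$. For the compression steps appearing in Moves 2, 4, 5, and 7, the key observation is that since $D$ is a disc, any compressing curve $\gamma$ on $D$ already bounds a subdisc $D'\subset D$; combined with the compressing disc, $D'$ bounds a ball by irreducibility of $M$. Hence each such compression is realised by an ambient isotopy together with discarding a trivial sphere, and the essential disc component persists.

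The substantive case is $\partial$-compression, which occurs in Moves 2, 4, 6, and 7. Such a move uses a $\partial$-compression disc $\Delta$ with $\partial\Delta = \alpha\cup\beta$, where $\alpha\subset D$ and $\beta$ lies on $\partial M$ or on a handle boundary, and $\Delta$ is disjoint from $\mathcal{I}$ by the convention established in Section~\ref{section:ths}. The arc $\alpha$ splits $D$ into two subdiscs $D_1$ and $D_2$, yielding new discs $D_i' := D_i\cup\Delta$, both still disjoint from $\mathcal{I}$. If $\partial D_1'$ and $\partial D_2'$ both bounded discs in $\partial M$, then gluing these discs along a collar of $\beta$ would produce a disc in $\partial M$ with boundary $\partial D$, contradicting essentiality of $D$. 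Hence at least one $D_i'$ is essential, and we continue the procedure with it.

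It remains to verify that the discarding operations do not remove our essential disc: Moves 1, 2, and 5 discard only components contained in parallelity pieces, spheres, and $\partial_v H$-parallel discs. A sphere is not a disc; a $\partial_v H$-parallel disc or a disc contained entirely in a parallelity piece can be pushed to $\partial M$, so its boundary bounds a disc in $\partial M$ and it is inessential. Hence the essential disc survives to the end, proving the lemma. The main subtlety I expect is checking, case by case in Move 2, that the $\partial$-compression discs used inside a parallelity piece with $\beta\subset \partial_v H\cap \partial M$ meet the convention that $\Delta$ is disjoint from $\mathcal{I}$; this should follow from the fact that $\partial_h\mathcal{H}^{\mathcal{P}}\subset\mathcal{I}$ and the restriction in Move 2 that the $\partial$-compression disc avoid the horizontal boundary.
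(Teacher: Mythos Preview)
Your proof is correct and follows the same approach as the paper: run the normalisation procedure, use irreducibility to see that compressions on a disc only split off trivial spheres, and observe that each $\partial$-compression yields two discs of which at least one must be essential (else $\partial D$ would bound in $\partial M$). The paper's version is terser but identical in structure; your remarks on the discarding steps and on Move~2's $\partial$-compression discs avoiding $\mathcal{I}$ are correct details the paper leaves implicit.
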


\begin{proof}
Apply the normalisation procedure to this essential disc $D$.
As $M$ is irreducible and $D$ is (trivially) incompressible, whenever we compress $D$ in the normalisation procedure we will produce a surface admissibly isotopic to $D$ and a trivial sphere, which we can discard.
Whenever we $\del$-compress $D$ we will produce two discs, both of which are disjoint from the forbidden region.
At least one of them must be essential as $\del D$ does not bound a disc in $\del M$.
Discard the other.
\end{proof}

\cannormaliseincompressiblesurfaces
%\begin{prop}
%\label{prop:cannormaliseincompressiblesurfaces}
%    Let $F$ be an incompressible $\del$-incompressible properly-embedded surface in an irreducible $\del$-irreducible manifold $M$ with \ths{} $\mathcal{H}$, disjoint from the forbidden region, such that no component is a trivial sphere or disc or is entirely contained in a parallelity piece.
%    Then $F$ is admissibly isotopic to a normal surface.
%\end{prop}

\begin{proof}
Apply the normalisation procedure to $F$.
 As $F$ is incompressible and $\del$-compressible and $M$ is irreducible and $\del$-irreducible, each time we ($\del$-)compress $F$ in the normalisation procedure we will produce two components: a surface that is admissibly isotopic to $F$ and a trivial sphere or disc.
Thus we produce a normal surface admissibly isotopic to $F$, as well as some collection of trivial spheres and discs, which we can discard. 
\end{proof}

\subsection{Subtetrahedral \ths{} combinatorics}

We give bounds on the number of elementary discs types.

\combinatorialconstraintssubtetrahedral

\begin{proof}
    Consider a semitetrahedral handle structure $\mathcal{H}$ and normal surface $F$ such that $H$ is homeomorphic to one of the pieces of $\mathcal{H}\backslash\backslash F$ from some 0-handle $H'$ in $\mathcal{H}$.
    We can think of forming the pieces of $H'$ in the induced \ths{} as occurring in two steps.
    First, we cut $H'$ along a collection of elementary discs.
    This has the effect on the boundary of $H'$, which we think of as a graph embedded in $S^2$, of cutting it along a collection of (separating) curves and filling these holes in with the forbidden region.
    Second, we possibly replace some of the 1-handles with parallelity pieces, in which case (as, if a 1-handle can be given a parallelity structure, so can any 2-handles it borders) the effect on the boundary graph is to merge a valence one or two island with the bridge(s) adjacent to it.

    Consider cutting along one elementary disc $D$ of $F$ in $H'$ at a time.
    As $\mathcal{H}$ is semitetrahedral, the islands of $H'$ are at most trivalent, so $\del D$ runs through each island at most once.
    As $\del D$ runs through each island or bridge at most once and is separating, the subgraph of islands and bridges after cutting along it is a subgraph of the complete graph on four vertices, so after cutting into pieces, there are between zero and four islands, each of which has valence at most three.
    We can rule out the case when the boundary graph is empty as, since the boundary graph of $H'$ is connected, the boundary of any elementary disc runs through at least one island, so there will be an island in each of the pieces of $H'$ that it separates.
    Turning 1-handles into parallelity handles does not increase the number of islands or their valence.
    If all of the 1-handles become parallelity pieces (so there were zero islands) then in fact the entire 0-handle will become a parallelity piece, so that is impossible.

    Let $G$ be the boundary graph of $H'$ (containing $b$ bridges) and $\mathcal{I}$ its forbidden region, where we assume that $G$ is connected and contains at most $12-2b$ sutures.
    The elementary disc boundary $\del D$ separates $\del H'$ into two components.
    Pick one of them, $C$, to consider.
    When we cut along $D$, the other component from $\del H'$ has one less bridge than $G$ for every bridge fully contained in $C$, one less suture for each suture in $C$, and one more suture for each arc of $D$ that runs through a lake.
    Thus for the count of sutures it suffices to prove that twice the number of bridges fully contained in $C$ plus the number of sutures of $H'$ in $C$ is at least the number of arcs in $\del D$ that run through a lake.
    Consider one of these arcs $\alpha$.
    It separates the lake into two components, one of which is in $C$, and each of which is bordered by at least one bridge or suture.
    If the component in $C$ is bordered by a bridge, assign the arc $\alpha$ to that bridge.
    Note that as the bridge is adjacent to at most two lakes, and $\del D$ crosses each lake only once, by doing this we will associate at most two arcs to the bridge.
    Also note that as the elementary disc does not run through an adjacent bridge and lake, this bridge is contained in $C$.
    Otherwise, this component in $C$ is bordered by at least one suture, as $\alpha$ runs between two different components of intersection of an island and a lake, so assign $\alpha$ to this suture, which again we see is contained in $C$.
    The forbidden region is on the other side of this suture so we will not assign any other arcs to it.
    We can thus see that the count is as we claimed.
    Turning 1-handles into parallelity regions only reduces the number of bridges further.

    For the final statement, fix an island in $\mathcal{H}$ of valence $v$ that intersects at most $6-2v$ sutures.
    Consider some elementary disc boundary running through this island.
    By the same procedure as for the total suture count, assign any new sutures that are created to a bridge or a suture that intersects the island to get the result: we start with three adjacent bridges, and add at most two sutures each time we remove one.
\end{proof}

Recall that $\norm{\mathcal{H}}$ is the number of 0-handles in $\mathcal{H}$.

\begin{lemma}
\label{lemma:bridgeslakesbound}
    The total number of bridges and lakes in a subtetrahedral trace 0-handle $H$ is at most 13.
\end{lemma}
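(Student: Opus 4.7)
The plan is to apply Euler's formula on $S^2 \cong \partial H$ to the subgraph $G$ of the boundary graph whose vertices are the islands and whose edges are the bridges, and then count how the sutures further subdivide the faces of $G$ into lakes and forbidden regions. Let $i$, $b$, $s$, $l$, $f$ denote the numbers of islands, bridges, sutures, lakes, and forbidden regions of $H$.

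First I would observe that $G$ is connected, since the boundary graph is connected by definition of a \ths{}. Bridges are components of $H \cap \mathcal{H}^2$ and $H \cap \mathcal{H^P}$, which attach to $H$ along interior discs of $M$, whereas sutures are components of $H \cap \del\mathcal{I}$ and hence lie in $\partial M$; so bridges and sutures are disjoint, and every suture is contained in the open complement of $G$ in $\partial H$. Consequently Euler's formula yields $F = 2 - i + b$ faces, each of which is an open disc.

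Next I would analyse a single face $D$ of $G$. Within $D$, the sutures form a disjoint collection of arcs (with endpoints on $\partial D$, which lies on island boundaries) together with disjoint closed curves; consecutive regions across a suture alternate between lakes and forbidden regions because each suture bounds a forbidden region on one side. Since adding a disjoint arc or closed curve to a disc increases the number of complementary regions by exactly one, if $D$ contains $s_D$ sutures, $l_D$ lakes, and $f_D$ forbidden regions, then $l_D + f_D = s_D + 1$. Summing over all $F$ faces gives
\[
l + f \;=\; s + F \;=\; s + 2 - i + b.
\]

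Finally I would rearrange and apply the bounds $i \geq 1$, $s \leq 12 - 2b$ from Lemma~\ref{lemma:combinatorialconstraintssubtetrahedral}, and $f \geq 0$:
\[
l + b \;=\; s + 2 - i + 2b - f \;\leq\; (12 - 2b) + 2 - 1 + 2b - 0 \;=\; 13,
\]
which is the desired inequality. The main step that needs care is the face count $F = 2 - i + b$: it relies on both the connectedness of $G$ and the fact that sutures lie strictly in the complement of the bridges, both of which are immediate from the \ths{} definition. Everything else is combinatorial bookkeeping based on Lemma~\ref{lemma:combinatorialconstraintssubtetrahedral}.
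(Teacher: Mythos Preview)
Your proof is correct and follows essentially the same approach as the paper's: both apply Euler's formula to the connected island--bridge graph on $S^2$ and then invoke the bound $s\leq 12-2b$ from Lemma~\ref{lemma:combinatorialconstraintssubtetrahedral}. Your packaging via the identity $l+f=s+F$ and the inequality $f\geq 0$ is slightly cleaner than the paper's informal ``each pair of sutures can increase the number of lakes by one'', but the content is the same; the only small omission is that you argue sutures avoid bridges but not islands, though the same ``interior gluing'' reasoning covers islands as well.
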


\begin{proof}
    From Lemma~\ref{lemma:combinatorialconstraintssubtetrahedral}, each 0-handle contains at most 6 bridges and $12-2b$ sutures, where $b$ is the number of bridges.
    The number of lakes from the subgraph on islands and bridges alone is at most $b+1$ by an Euler characteristic argument.
    Each pair of sutures can increase the number of lakes by one, if they bound a forbidden region that borders a lake on both sides.
    (If a suture bounds a forbidden region that borders a bridge on one side then removing this region does not change the number of lakes.)
    Thus having sutures does not in fact increase the count, so that maximum count is $6 + 6 + 1 = 13$.
\end{proof}

\begin{lemma}
    \label{lemma:boundoncutopensize}
    If $F$ is a (possibly disconnected) non-\pns{} normal surface in a subtetrahedral \ths{} $\mathcal{H}$ then $\norm{\mathcal{H}\backslash\backslash F}$ is at most $2^{13}\norm{\mathcal{H}}$.
\end{lemma}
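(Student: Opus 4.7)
By the construction in Definition~\ref{defn:inducedths}, a 0-handle of $\mathcal{H}\backslash\backslash F$ can only arise as a non-parallelity component of some $H\backslash\backslash F$ with $H$ a 0-handle of $\mathcal{H}$: cutting a parallelity piece, 1-handle, 2-handle, or 3-handle of $\mathcal{H}$ along $F$ produces only parallelity pieces or handles of the same index. It therefore suffices to show that each 0-handle $H$ of $\mathcal{H}$ contributes at most $2^{13}$ non-parallelity components to $H\backslash\backslash F$, so that summing over 0-handles yields $|\mathcal{H}\backslash\backslash F| \leq 2^{13}|\mathcal{H}|$.

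Fix such a 0-handle $H$. By Lemma~\ref{lemma:bridgeslakesbound} its boundary graph has at most $13$ bridges and lakes combined. To each non-parallelity component $C$ of $H\backslash\backslash F$ I will assign the subset $\sigma(C)$ of these at-most-$13$ features that meet $C$; this is a map into a set of cardinality $\leq 2^{13}$, and the plan is to show $\sigma$ is injective.

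Injectivity will rest on the following. Suppose $C \neq C'$ are two non-parallelity components with $\sigma(C)=\sigma(C')$. Pick an arc in $H$ from an interior point of $C$ to an interior point of $C'$ that meets $F\cap H$ transversely in minimally many points, and look at the induced sequence of elementary-disc crossings. The equality $\sigma(C) = \sigma(C')$ forces the sequence to contain a consecutive pair of crossings involving parallel elementary discs of the same normal-isotopy type (otherwise the intermediate components would record distinct subsets of bridges and lakes and one could propagate the mismatch from $C$ to $C'$). The slab $C''$ of $H$ between such a pair has horizontal boundary consisting of two parallel elementary discs both contained in the forbidden region of $\mathcal{H}\backslash\backslash F$, and annular vertical boundary running through adjacent 1-handles, 2-handles and preexisting parallelity pieces. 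By the product-structure compatibility conditions in Definition~\ref{defn:inducedths}, together with the fact that $F$ is normal and $\mathcal{H}$ is subtetrahedral, these structures align along the stack so that $C''$ fits the parallelity-piece template. But then a non-parallelity component lying strictly between $C$ and $C'$ on the minimal arc is impossible, giving the desired contradiction.

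The main obstacle is the two combinatorial claims just invoked: first, that $\sigma(C)=\sigma(C')$ genuinely forces a consecutive parallel pair of elementary-disc crossings on the minimal arc, which one establishes by tracking, bridge-by-bridge and lake-by-lake, how successive crossings alter the set of features accessible from the current component; and second, that the slab $C''$ really inherits a coherent $I$-bundle structure fitting the full list of conditions in Definition~\ref{defn:inducedths}. Both verifications are routine from the definition of a normal surface in a \ths{} and the subtetrahedral constraints of Lemma~\ref{lemma:combinatorialconstraintssubtetrahedral}, but they require careful bookkeeping and are the fiddly core of the argument.
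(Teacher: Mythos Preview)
Your approach differs from the paper's, and the injectivity step has a genuine gap.

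The paper does not try to show that the map $\sigma$ is injective. Instead it assigns to each subtetrahedral 0-handle the integer complexity $c(H)=(\text{number of bridges})+(\text{number of lakes})$, bounded by $13$ via Lemma~\ref{lemma:bridgeslakesbound}, and shows that when a single elementary disc $D$ separates $H$ into $H_1$ and $H_2$ one has $c(H_i)\le c(H)$, with equality for one piece only when the other piece is a parallelity piece. Hence every cut that produces two new 0-handles strictly lowers the complexity of both; processing the discs of $F\cap H$ one at a time gives a rooted tree in which every genuine branching strictly decreases $c$, so there are at most $2^{13}$ non-parallelity leaves. This monotone-complexity argument is short and sidesteps any global combinatorics of the pieces.

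Your argument, by contrast, rests on two unproved claims. First, the assertion that $\sigma(C)=\sigma(C')$ forces a consecutive pair of normally-isotopic discs on the minimal arc is never established; the parenthetical about ``propagating the mismatch'' is an intuition, not a proof, and it is far from clear that distinct elementary disc types cannot leave matching bridge/lake sets on both sides. Second, and more seriously, even granting such a pair you obtain only that the single slab $C''$ between them is a parallelity piece. This is not a contradiction: $C''$ is neither $C$ nor $C'$; the arc cannot be shortened because each elementary disc separates the ball $H$ and so must be crossed; and the non-\pns{} hypothesis constrains only connected \emph{components} of $F$, not individual elementary discs, so a single connected normal surface may perfectly well meet one 0-handle in many discs of the same normal type. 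Your final sentence, that ``a non-parallelity component lying strictly between $C$ and $C'$ on the minimal arc is impossible'', therefore does not follow from anything you have shown.
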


This is a substantial overestimate, but will do for our purposes.

\begin{proof}
    We know that $\mathcal{H}\backslash\backslash F$ is a \ths{} from Lemma~\ref{lemma:normalsurfacegivesths}.
    Define the complexity of a subtetrahedral 0-handle $H$, $c(H)$, to be the sum of the number of lakes of $H$ and the number of bridges.
    By Lemma~\ref{lemma:bridgeslakesbound} $c(H)$ is at most 13.
    Consider an elementary disc $D$ that separates $H$ into two pieces.
    After we cut along $D$ and take the induced handle structure, we obtain at most two 0-handles, $H_1$ and $H_2$.
    We show that $c(H_i)$ is at most $c(H)$ for each $i$, with equality only if the other piece is a parallelity piece.
    
    As $\del D$ does not run through a lake or bridge twice, cutting along an edge of $D$ cannot increase either of the terms in the complexity.
    The boundary of $D$ separates the boundary of $H$ and is not a trivial curve, so as it does not run through a bridge or lake twice or through a bridge and an adjacent lake, it must separate off at least one forbidden region, bridge, or lake on both sides.
    If it separates off exactly one forbidden region and no bridges or lakes on one side, then it is parallel to the boundary of the forbidden region so one of the $H_i$ is a parallelity handle.
    If it separates off more than one forbidden region then two of those forbidden regions had a lake or bridge between them, so it has reduced the number of lakes.
    If it separates off a bridge or lake on one side, then cutting along it reduces the number of bridges or lakes.

   Thus each time we cut a 0-handle $H$ into pieces, if we produce two pieces then they both have lower complexity than $H$, so we may obtain at most $2^{13}$ 0-handles in $\mathcal{H}\backslash\backslash F$.
\end{proof}

\begin{lemma}
\label{lemma:finiteelementarydisctypes}
    There are a finite number of subtetrahedral 0-handles up to homeomorphism, and each of these can contain a finite number of elementary disc types up to normal isotopy.
\end{lemma}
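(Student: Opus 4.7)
The plan is to reduce both statements to finite combinatorial enumerations using the structural bounds already established in Lemma~\ref{lemma:combinatorialconstraintssubtetrahedral} and Lemma~\ref{lemma:bridgeslakesbound}.

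For the first statement, I would observe that a subtetrahedral 0-handle $H$ is determined up to homeomorphism by its decorated boundary graph in $\partial H \cong S^2$, together with the specification of which regions are forbidden. By Lemma~\ref{lemma:combinatorialconstraintssubtetrahedral}, the number of islands is between 1 and 4, the number of bridges $b$ is at most 6, each island has valence at most 3, and the number of sutures is at most $12-2b$. Since islands, bridges, and sutures are all bounded in number, and the boundary graph is a graph embedded in $S^2$ (so only finitely many combinatorial embedding types exist on a fixed number of vertices and edges), there are only finitely many possible decorated boundary graphs. Finally, the assignment of forbidden region versus lake to each face of the graph (consistent with the suture locations) is a finite choice. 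I would then invoke the fact (implicit in Definition~\ref{defn:subtetrahedral}) that the homeomorphism type of a subtetrahedral 0-handle is determined by its decorated boundary graph, since the 0-handle itself is a 3-ball and the relevant structure lives in its boundary sphere.

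For the second statement, fix such a 0-handle $H$. An elementary disc $D$ in $H$ is, by Definition~\ref{defn:normalsurface}(\ref{defn:normalsurface:itm:0handles}), a properly embedded disc whose boundary is a simple closed curve in the boundary graph that crosses each bridge and each lake at most once, intersects only one element from any adjacent bridge-lake pair, and avoids forbidden regions. Since $D$ is determined up to normal isotopy by the combinatorial data of $\partial D$ in the boundary graph (as noted immediately after Definition~\ref{defn:normalsurface}), it suffices to count such curves. By Lemma~\ref{lemma:bridgeslakesbound} there are at most 13 bridges and lakes combined, so $\partial D$ is encoded by a cyclic subsequence of this finite set subject to the combinatorial constraints above. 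This is a finite enumeration.

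The only mildly subtle step is justifying that two elementary discs with the same combinatorial boundary are normally isotopic, but this is immediate from the fact that $H$ is a ball and the complement of the bridges and lakes in $\partial H$ consists of islands and forbidden regions, so any two properly embedded discs with the same boundary curve cobound a ball in $H$ that respects the handle structure. I do not anticipate any genuine obstacle here; the entire argument is combinatorial bookkeeping once Lemmas~\ref{lemma:combinatorialconstraintssubtetrahedral} and~\ref{lemma:bridgeslakesbound} are in hand.
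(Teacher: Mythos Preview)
Your proposal is correct and follows essentially the same approach as the paper: both reduce the first statement to the combinatorial constraints of Lemma~\ref{lemma:combinatorialconstraintssubtetrahedral}, and both reduce the second to the fact that an elementary disc is determined by its boundary curve, which is encoded by finite combinatorial data. The paper parametrises $\partial D$ by the ordered list of islands it visits together with the entry/exit bridge or lake at each, while you parametrise it as a cyclic sequence in the set of at most 13 bridges and lakes from Lemma~\ref{lemma:bridgeslakesbound}; these are equivalent, and you helpfully make explicit the point that two discs with the same combinatorial boundary are normally isotopic, which the paper simply asserts after Definition~\ref{defn:normalsurface}.
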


\begin{proof}
    That there are a finite number of types of subtetrahedral 0-handles follows from the combinatorial constraints of Lemma~\ref{lemma:combinatorialconstraintssubtetrahedral}.
    An elementary disc $D$ in such a 0-handle $H$ is determined by its boundary $\del D$, which is itself completely described (as the complement of the boundary graph is a collection of discs) by the ordered list of islands it runs through and, for each, which intersection with a bridge or lake it enters and leaves by.
    As it may run through each island, bridge or lake at most once, there are a finite number of possibilities.
\end{proof}

\begin{notation}
Write $h_H$ for the number of possible subtetrahedral 0-handle types, and write $d_H$ for the maximum possible number of elementary disc types up to normal isotopy in a subtetrahedral 0-handle.
\end{notation}

\begin{remark}
\label{remark:boundondH}
By Lemma~\ref{lemma:bridgeslakesbound}, each 0-handle has a total of at most 13 lakes and bridges, so, by labelling each disc type by the number of edges in it and then the order of them, $d_H$ is at most $13\cdot 13!$.
This bound is far from sharp; enumerating by hand shows that the true number for a tetrahedral 0-handle (whose boundary graph is the complete graph on four vertices) is 59 elementary disc types and that this is maximal (see Appendix A of the author's thesis~\cite{JacksonThesis}).
\end{remark}

Note that there is a natural inclusion map from surfaces in $M\backslash\backslash F$, disjoint from the forbidden region, to surfaces in $M$, which takes normal surfaces to normal surfaces.

\subsection{Summands of incompressible normal surfaces}
In this section all \ths{}s are subtetrahedral.
In Matveev's excellent book~\cite{Matveev}, he carefully lays out the foundations of normal surface theory in the triangulation setting.
Although we are in a more general setting, it is similar enough for the ideas in the proofs he gives to apply.
See the proof of Theorem 4.1.36 in~\cite{Matveev} for his treatment of these results in the triangulation setting, drawing on work of Haken, Jaco, Oertel and Tollefson, among others.
The analogues of the structure he relies on are Lemmas~\ref{lemma:irregularswitchreducestrace} and~\ref{lemma:nonzerobeamdegree}, which show respectively that performing an irregular switch reduces the weight of a normal surface sum, and that no normal surface has zero weight.
Recall the definition of the sum of normal surfaces in \ths{}s in Procedure~\ref{procedure:normalsurfacesum} and the definition of the weight of a normal surface in Definition~\ref{defn:weight}.

\begin{defn}
    Suppose that $F = G_1 + G_2$.
    Let $\gamma$ be the collection of double curves of $G_1\cup G_2$.
    A \emph{patch} of $G_1\cup G_2$ is a component of $G_1\cup G_2 - \gamma$.
\end{defn}

\begin{defn}
    A surface $F$ is \emph{minimal} if it is of minimal weight in its admissible isotopy class.
\end{defn}

\incompressiblenormalsurface

To set up, isotope $G_1$ and $G_2$ to minimise the number of components in $G_1\cap G_2$ subject to the requirement that $F = G_1 + G_2$, so that $G_1\cup G_2$ is in \emph{reduced form}.
Note that as $M$ is irreducible and $\del$-irreducible, no component of $F$ is a disc or sphere, as then we could isotope it to the interior of a 0-handle and so, in fact, there was no minimal normal surface representative of this surface.
A \emph{disc patch} is a patch, homeomorphic to a disc, whose boundary is either a closed curve in the interior of $M$, or an arc in the interior and an arc on $\del M$ (which we note must be disjoint from the forbidden region).

To prove Proposition~\ref{prop:incompressiblenormalsurface}, we first give a lemma that is analogous to Lemma 4.1.8 of~\cite{Matveev}.

\begin{lemma}
The patches of $G_1\cup G_2$ are incompressible and $\del$-incompressible, and none are disc patches.
\end{lemma}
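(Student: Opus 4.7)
The plan is to argue by contradiction for each claim, combining the irregular-switch trick encoded in Lemma~\ref{lemma:irregularswitchreducestrace} with the irreducibility and $\del$-irreducibility of $M$, the incompressibility and $\del$-incompressibility of $F$, and the reduced form of $G_1 \cup G_2$.

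For the ``no disc patches'' claim, suppose $D$ is a disc patch, without loss of generality with $D \subset G_1$, and let $\gamma = \del D$. Then $\gamma$ is either a simple closed component of $\Gamma := G_1 \cap G_2$ or an arc of $\Gamma$ with endpoints on $\del M \setminus \mathcal{I}$. Since $\gamma$ also lies on $G_2$, there are two cases. If $\gamma$ bounds a disc $D' \subseteq G_2$ (or cobounds a disc in $G_2$ with an arc of $\del G_2$, in the arc case), then $D \cup D'$ is an embedded sphere (resp.\ properly-embedded disc) in $M$ which by irreducibility (resp.\ $\del$-irreducibility) bounds a ball (resp.\ half-ball). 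This ball supplies an admissible isotopy of $G_1$ pushing $D$ past $D'$ that removes $\gamma$ from $\Gamma$, contradicting the reduced form of $G_1 \cup G_2$. If instead $\gamma$ does not so bound in $G_2$, then performing the irregular switch at $\gamma$ (and the regular switch at every other component of $\Gamma$) yields a surface $F^*$; near $\gamma$ this local switch effectively surgers $F$ along the disc $D$. Since $F$ is incompressible and $\del$-incompressible, the surgery produces a trivial sphere or disc component of $F^*$, which by irreducibility or $\del$-irreducibility of $M$ can be discarded, leaving an incompressible and $\del$-incompressible component admissibly isotopic to $F$. Lemma~\ref{lemma:irregularswitchreducestrace} then furnishes a normal representative of this component of weight strictly less than $F$, contradicting the minimality of $F$.

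For the incompressibility and $\del$-incompressibility of the patches, suppose some patch $P$ admits a non-trivial compression or $\del$-compression disc $\Delta$, chosen to minimise $|\Delta \cap \Gamma|$. An innermost-curve argument on $\Delta \cap \Gamma$, using irreducibility and the reduced form to eliminate trivial intersections, produces a subdisc $\Delta' \subseteq \Delta$ whose interior is disjoint from $\Gamma$. The boundary of $\Delta'$ lies entirely in a single patch of $F$, so $\Delta'$ is a compression or $\del$-compression disc for $F$ itself, contradicting the incompressibility or $\del$-incompressibility of $F$.

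I expect the main obstacle to be the careful bookkeeping of the irregular switch in the presence of parallelity pieces and the forbidden region $\mathcal{I}$: in particular, verifying that the isotopies of $G_1$ produced in the first subcase can be arranged to fix $\mathcal{I}$ setwise, and that the surgery interpretation of $F^*$ carries over when $\gamma$ is an arc meeting $\del M$, or when $\gamma$ lies in a parallelity piece whose $I$-bundle structure is nonorientable and one of the sheets at $\gamma$ is a double cover of a section rather than a section itself.
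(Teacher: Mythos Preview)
Your ``no disc patches'' argument has a genuine gap. The assertion that the irregular switch at $\gamma$ ``effectively surgers $F$ along the disc $D$'' is false: every switch is a local cut-and-paste of the same patches and so preserves Euler characteristic, whereas compression along a disc changes it by $2$. What the irregular switch at $\gamma$ actually does is cut $F$ along the two trace curves $s_1,s_2\subset F$ coming from $\gamma$ and reglue with the other pairing; this need not produce any sphere or disc component at all. Relatedly, your dichotomy on whether $\gamma$ bounds in $G_2$ is not the right one: the analysis must be organised around what $s_2$ bounds in $F$. It is perfectly possible for $\gamma$ to be essential in $G_2$ while $s_2$ bounds a disc $E'$ in $F$, and in that situation your Case~2 yields no contradiction.

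The case you miss entirely is when $s_2$ bounds a disc $E'\subset F$ \emph{adjacent} to $E$ at $\gamma$ (so the irregular switch produces a surface homeomorphic to $F$). Handling this is the hardest step and requires Matveev's cycle argument (Lemma~4.1.4 of \cite{Matveev}): one builds a directed graph on disc patches with edges recording containment of companions, extracts a cycle, and takes irregular switches along that cycle to write $F=F_1+T$ with $\chi(T)=0$ and $F_1$ isotopic to $F$ but of strictly smaller weight; or else one finds two adjacent disc patches that can be exchanged between $G_1$ and $G_2$ to reduce $|G_1\cap G_2|$, contradicting reduced form. Your Case~1 also needs repair: the isotopy pushing $D$ past $D'$ is not a normal isotopy, so it need not preserve the normal sum $F=G_1+G_2$ and hence does not directly contradict reduced form (and $D'$ may anyway be pierced by $G_1$). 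Finally, in your incompressibility paragraph the logic is inverted: the subdisc $\Delta'$ may well be a trivial compressing disc for $F$; what one actually uses is that $\partial\Delta'$ bounds a disc in $F$ which, \emph{because there are no disc patches}, must lie inside the original patch $P$, so the putative compression of $P$ was trivial after all.
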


\begin{proof}
First, we show there are no disc patches.
Suppose that a disc patch $E$ did exist, say in $G_1$.
Write $s$ for its double curve in $G_1\cup G_2$, and $s_1$ and $s_2$ for the corresponding trace curves in $F$, where $s_1$ bounds $E$ and $s_2$ is the other curve.
If the boundary of this patch is one-sided in $G_1$, then the connected component of $G_1$ containing $E$ is a normal projective plane $P$.
As $M$ is irreducible, it is $\RR P^3$ and $P$ is incompressible.
By Lemma~\ref{lemma:nonzerobeamdegree} all normal surfaces in $M$ have non-zero beam degree.
In particular, neither $b(G_1)$ nor $b(G_2)$ is zero, so $b(P)$ is strictly less than $b(F)$ and $p(P)$ is at most $p(F)$.
Now, $\RR P^3$ contains (up to isotopy) only one closed incompressible surface that does not contain a sphere: this projective plane.
Thus $F$ was not minimal.

Suppose that $s_2$ does \emph{not} bound a disc in $F$.
But then taking a parallel copy of $E$, bounded by $s_2$, we get a compression or $\del$-compression disc for $F$.

Suppose that $s_2$ bounds a disc $E'$ in $F$ that is opposite to $E$ at the intersection curve.
If $E'$ does not contain $E$, then $E\cup E'$ is a sphere that (as $M$ is irreducible) bounds a 3-ball, so $E$ and $E'$ are isotopic.
We can take the irregular switch at $s$ instead to obtain an isotopic surface to $F$ which by Lemma~\ref{lemma:irregularswitchreducestrace} is of lower weight than $F$, so $F$ was not minimal.

If $E'$ does contain $E$, again taking the irregular switch along $s$, we get, if $E$ was an interior disc, two surfaces $F_1\cup T$.
The surface $F_1$ is homeomorphic as a surface to $F$ and $\del F_1 = \del F$, while $T$ is formed by taking $E'\backslash E$ and identifying the boundary circles, and so is a torus or Klein bottle.
As the switch was irregular, at least one of $F_1$ and $T$ contains a return and hence after normalising has its weight decreased.
Let $F'$ be the result of normalising $F_1$.
If the return is in $F_1$ then the weight of $F'$ is less than that of $F$; if the return is in $T$ then the weight of $T$ is non-zero and so we can draw the same conclusion.
As, since $M$ is irreducible, $E'$ is isotopic to a parallel copy of $E$, $F$ and $F'$ are isotopic and so $F$ was not minimal.

If $E$ intersected the boundary, we can follow a similar line of reasoning. Now after an irregular switch we get $F_1\cup A$ where $A$ is an annulus or M\"obius band.
The same reasoning however still holds.

The remaining case in which there are disc patches is when every disc patch has an adjacent companion disc.
This is the case considered in Lemma 4.1.4 of~\cite{Matveev}, where Matveev shows the following result in the triangulation setting:

    Suppose that every disc patch of $G_1\cup G_2$ has an adjacent companion disc $E'$.
    Then either $F$ can be presented as $F = F_1 + T$ where $T$ has Euler characteristic zero and $F_1$ is admissibly isotopic to $F$ and of lower weight, or there is a disc patch $E$ whose adjacent companion disc is also a patch of $F$.

We give the idea of Matveev's proof, which also goes through in our setting.
It is to associate to $G_1\cup G_2$ an oriented graph, whose vertices are disc patches and there is an edge from $E_1$ to $E_2$ if $E_2$ is contained in the adjacent companion disc $E_1'$ of $E_1$.
As every vertex has at least one outgoing edge, this graph contains a cycle.
Resolve $G_1\cup G_2$ with the irregular switch at the edges of the patches in this cycle, and the regular switch otherwise.
We get a decomposition $F = F_1 + T$ where $T$ is the union of the pieces $E_i'-E_{i+1}$.
As by Lemma~\ref{lemma:nonzerobeamdegree} $T$ has non-zero weight, $F_1$ is of lower weight than $F$.
Note that $F$ is homeomorphic to $F_1$ as if we perform regular switches everywhere, then at $\del E_i$ we replace $E_i$ with $A_i\cup E_{i+1}$.
Now, $A_i\cup E_{i+1}\cup E_i$ is a disc or a sphere, so as $M$ is irreducible and $\del$-irreducible they separate off balls.
We can use this to construct the desired isotopy.

Now, as $F$ is minimal, there must be some disc patch $E$ whose adjacent companion disc $E'$ is also a patch.
Let $G_1'$ and $G_2'$ be the surfaces resulting from swapping $E$ and $E'$ within $G_1$ and $G_2$.
Now, $E\cup E'$ is either a sphere or a properly-embedded disc, so as $M$ is irreducible and $\del$-irreducible, we can isotope $G_1$ to $G_1'$ and $G_2$ to $G_2'$, reducing the number of intersections in $G_1\cup G_2$.
Thus $G_1 + G_2$ was not in reduced form, as we assumed.

We can therefore conclude that there are no disc patches. Suppose $D$ is a compression disc for some patch $P$, which we perturb to be transverse to $F$.
Let $\gamma$ be an innermost curve of $F\cap D$, which, as $F$ is incompressible, bounds a disc $D'$ in $F$.
If $P\sub D'$ then $P$ is planar and all but one of its boundary components bound discs in $F$.
But then either $P$ is a disc patch or the disc bounded by one of these boundary components contains a disc patch.
Thus $D'$ is disjoint from $P$ so we can therefore isotope $D$ to remove all intersections with $F$ other than its boundary.
Now, $\del D$ bounds a disc in $F$.
This disc must be contained in $P$ as otherwise some component of $\del P$ bounds a disc, and so there is a disc patch.
Thus $P$ is incompressible.

Suppose that $D$ is a $\del$-compression disc for $P$.
By the same reasoning, we can isotope $D$ to remove any curves of intersection with $F$.
If $\alpha$ is an outermost arc of $F\cap D$ other than the segment of $\del D$ in $P$, as $F$ is $\del$-incompressible, it bounds a disc in $F$ together with an arc in the boundary.
By the same reasoning, this disc is disjoint from $P$ so we can isotope $D$ to further remove these arcs.
Thus $D\cap P$ bounds a disc in $F$, which, as there are no disc patches, is contained in $P$.
\end{proof}

\begin{proof}[Proof of Proposition~\ref{prop:incompressiblenormalsurface}]
As any decomposition of a disc, sphere or projective plane has a disc patch, each component of $G_1$ and $G_2$ has nonpositive Euler characteristic.
If $G_1$ is compressible or $\del$-compressible, we will construct a nontrivial compression or $\del$-compression disc $D$ for $G_1$ that is disjoint from $G_2$.
Let the \emph{weight} of a compressing disc $D$ for $G_1$ be $w(D) = |D\cap G_2| + |\del D \cap (G_2\backslash\backslash \del M|$.
The idea is to give moves to reduce this weight.

\setcounter{claimcounter}{0}
\begin{claim}
Suppose that $G_1$ has a nontrivial compression or $\del$-compression disc $D$, which we can assume is transverse to $G_2$.
If $D\cap G_2$ contains a closed curve or an arc with both endpoints in $\del M$, then there is a compression or $\del$-compression disc $D'$ for $G_1$, of lower weight, that does not contain such intersections with $G_2$.
\end{claim}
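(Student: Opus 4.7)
The plan is a standard innermost-disc, outermost-arc argument, iterated to induct downward on $w(D)$, using the input from the preceding lemma that every patch of $G_1 \cup G_2$ is incompressible and $\del$-incompressible. At each step I would produce a new (boundary-)compression disc $D'$ for $G_1$ with strictly smaller weight by eliminating either one closed curve or one offending arc of $D \cap G_2$, without increasing $|\del D \cap (G_2 \backslash\backslash \del M)|$.

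For a closed curve $\gamma \subset D \cap G_2$, I would pick one innermost on $D$; it bounds a subdisc $D_0 \subset D$ with interior disjoint from $G_2$. Since $\gamma$ misses the double curves $G_1 \cap G_2$, it lies in a single patch $P$ of $G_2$; if $\gamma$ were essential in $P$ then $D_0$ would be a nontrivial compression disc for $P$, contradicting patch incompressibility, so $\gamma$ bounds a disc $E \subset P$. Then $D_0 \cup E$ is a $2$-sphere, which bounds a $3$-ball by irreducibility of $M$, and I would use this ball to isotope $D$ rel $\del D$ across $G_2$, removing $\gamma$ from $D \cap G_2$ and leaving the boundary intersection count $|\del D \cap (G_2 \backslash\backslash \del M)|$ untouched.

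For an arc of $D \cap G_2$ with both endpoints on $\del M$, I would first eliminate all closed curves as above and then take such an arc $\alpha$ outermost on $D$. It cuts off a subdisc $D_0 \subset D$ with $\del D_0 = \alpha \cup \beta_0$, $\beta_0 \subset \del D \cap \del M$. Any arc of $D \cap G_2$ lying in $D_0$ would have both endpoints on $\beta_0$ (since it cannot cross $\alpha \subset G_2$), contradicting outermost-ness; hence $D_0 \cap G_2 = \alpha$. The arc $\alpha$ lies in a single patch $P$ of $G_2$, and $D_0$ realises a $\del$-compression of $P$, so by $\del$-incompressibility of patches there is a disc $E \subset P$ with $\del E = \alpha \cup \beta_0'$ and $\beta_0' \subset \del P \cap \del M$. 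The curve $\beta_0 \cup \beta_0'$ bounds the disc $D_0 \cup E$ in $M$; by $\del$-irreducibility it also bounds a disc $\Delta \subset \del M$, and then the sphere $D_0 \cup E \cup \Delta$ bounds a ball $B$ by irreducibility. I would use $B$ to isotope $D$ so that $\del D \cap G_1$ is fixed while $\beta_0$ is rerouted across $\Delta$ to an arc close to $\beta_0'$ and $D_0$ is pushed just past $E$, producing $D'$ with $|D' \cap G_2| = |D \cap G_2| - 1$ and the second weight term unchanged.

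The main obstacle will be controlling the ball $B$ and the disc $\Delta$: their interiors may contain further components of $G_2$ or boundary arcs of $\del G_2$, and a naive isotopy across $B$ could introduce new intersections. I plan to handle this by always choosing $E$ innermost in $P$ and $\Delta$ innermost in $\del M$, so that any remaining interior intersections correspond to strictly simpler closed-curve or boundary-arc configurations, which can be dealt with first by the induction on $w(D)$. Iterating both reductions until no closed curves and no arcs with both endpoints on $\del M$ survive yields the required $D'$.
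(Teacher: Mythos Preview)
Your proposal is correct and follows essentially the same approach as the paper: take an innermost closed curve or outermost arc of $D\cap G_2$, use incompressibility and $\del$-incompressibility of the patches to find a disc $E$ in the relevant patch of $G_2$, and use irreducibility and $\del$-irreducibility of $M$ to swap the subdisc of $D$ for $E$. The only cosmetic difference is that the paper phrases the last step as ``replace $\Delta$ in $D$ with this disc'' rather than as an isotopy across the ball, which sidesteps the concern you raise about extra sheets of $G_2$ inside $B$; either way the induction on $w(D)$ you describe is exactly what is needed, and the paper simply defers these details to Matveev's Lemma~4.1.35.
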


The triangulation setting analogue of this claim is shown in the first part of Lemma 4.1.35 of~\cite{Matveev}.
We sketch the idea of the proof, which does not in fact depend on its setting.
Take an innermost such curve or outermost such arc, bounding a disc $\Delta$ in $D$ such that $\Delta \cap G_2 = \del \Delta$ or $\Delta\cap G_2$ is a single arc.
As $\del \Delta\cap G_2$ is connected, it is contained in a single patch of $G_2$.
Now, as the patches of $G_1\cup G_2$ are incompressible and $\del$-incompressible and $M$ is irreducible and $\del$-irreducible, there is a disc in this patch of $G_2$ isotopic to $\Delta$.
We can replace $\Delta$ in $D$ with this disc to remove this intersection with $G_2$ and reduce the weight of $D$.

Let $D$ be a nontrivial compression or $\del$-compression disc for $G_1$ that intersects $G_2$ minimally.
By the claim, $D$ intersects $G_2$ only in arcs that have at least one endpoint away from $\del M$.
Let $\Delta$ be a region of $D$ cut out by $D\cap G_2$.
Then $\Delta$ is a ($\del$)-compressing disc for the polyhedron $G_1\cup G_2$, as it is transverse to the singular subcomplex of $G_1\cup G_2$ and $D\cap \del M$ is connected.
Following~\cite[\S4.1.6]{Matveev}, label the vertices of $\Delta$ as ``good'' or ``bad'' angles according to whether, in $F$, the patches forming the two edges are pasted together or not respectively.
Note that if all angles of $\Delta$ are good, then $\Delta$ is a compression disc for $F$.

The following claim is Lemma 4.1.33 of~\cite{Matveev}, and his proof goes through in our setting.
\begin{claim}
Suppose that $\Delta$ has exactly one bad angle.
Then $F$ is not minimal.
\end{claim}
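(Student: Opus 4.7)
The plan is to follow Matveev's strategy by using $\Delta$ as a template for an admissible isotopy that strictly reduces the weight of $F$. Let $v$ denote the unique bad angle of $\Delta$, and let $\alpha \sub G_1 \cap G_2$ be the double arc through $v$.

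First I would pass to a modified sum $F^{\sharp}$ obtained from $G_1 + G_2$ by performing the irregular switch along $\alpha$ while keeping the regular switch along every other double arc. Since the good/bad status of an angle at a vertex is flipped precisely when the switch at the incident double arc is reversed, and -- after replacing $\Delta$ if necessary by an innermost subdisc cut out by $\alpha \cap \Delta$ -- $v$ is the unique vertex of $\Delta$ lying on $\alpha$, this change converts the bad angle at $v$ into a good one while leaving all other angles of $\Delta$ good. Hence $\del \Delta \sub F^{\sharp}$, so $\Delta$ serves as a (boundary-)compression disc for $F^{\sharp}$.

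Next I would compress $F^{\sharp}$ along $\Delta$ to produce a surface $\widetilde{F}$, and verify that $\widetilde{F}$ is admissibly isotopic to the original $F$. Geometrically this passage is a finger-move of $F$ supported in a regular neighbourhood of $\Delta$ together with the portion of $\alpha$ on which the switch was reversed; any sphere or disc that arises as a by-product bounds a ball by the irreducibility and $\del$-irreducibility of $M$ and can be discarded. Admissibility is automatic because $D$, and hence $\Delta$, was chosen disjoint from the forbidden region $\mathcal{I}$. The weight strictly decreases: by the plate-degree calculation underlying Lemma~\ref{lemma:irregularswitchreducestrace}, swapping to the irregular switch along $\alpha$ forces a return in $F^{\sharp}$ and reduces the plate degree by at least two after normalisation, while the compression along $\Delta$ does not undo this reduction.

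The principal obstacle is the reduction to the case where $\Delta$ meets $\alpha$ in a single vertex: if several vertices of $\Delta$ sit on $\alpha$ then reversing the switch at $\alpha$ simultaneously toggles multiple angles of $\Delta$ and may create new bad angles. The resolution is to select an innermost subarc of $\alpha \cap \Delta$, cutting off a subdisc $\Delta' \sub \Delta$ whose boundary meets $\alpha$ only in the two endpoints of that subarc; since the only bad angle of $\Delta$ was $v$, the subdisc $\Delta'$ has at most one bad angle itself, and one may then apply the switch-and-compress argument to $\Delta'$. Executing this reduction while preserving disjointness from $\mathcal{I}$ throughout is the delicate step, but it is purely topological and faithfully mirrors Matveev's argument in~\cite[Lemma 4.1.33]{Matveev}, whose adaptation to the \ths{} setting only requires the admissibility bookkeeping just described.
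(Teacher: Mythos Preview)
Your argument has a genuine gap at its central step: the assertion that compressing $F^{\sharp}$ along $\Delta$ yields a surface admissibly isotopic to $F$. You justify this only as a ``finger-move'', but it is in fact the whole content of the claim. A compression raises Euler characteristic by $2$ (or $1$ in the $\del$-compression case) while the irregular switch leaves it unchanged, so $\widetilde{F}$ cannot be isotopic to $F$ unless a trivial sphere or disc splits off --- and why such a component should appear is exactly what needs to be proved. This is precisely where the incompressibility of $F$ must enter, yet you never invoke it. The paper (following Matveev, Lemma~4.1.33) proceeds differently: it does not pass to $F^{\sharp}$ at all, but instead uses the thin annulus or rectangle $A$ transverse to $G_1\cup G_2$ along the double curve $l$ through $v$ to assemble, from one or two copies of $\Delta$ together with a component of $A\setminus\Delta$, a ($\del$-)compression disc for $F$ itself. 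Incompressibility of $F$ then furnishes a disc in $F$ bounding the same curve, their union bounds a ball by irreducibility (and $\del$-irreducibility), and one isotopes $F$ across that ball and through $A$ to strictly reduce weight.

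Your reduction to the case where $v$ is the unique vertex of $\Delta$ on $\alpha$ is also not well-defined as written. The set $\alpha\cap\Delta$ consists of isolated vertices of $\del\Delta$: the interior of $\Delta$ misses $G_2\supset\alpha$, and the edges of $\del\Delta$ lie in patches, so there are no ``subarcs of $\alpha\cap\Delta$'' available to cut off an innermost subdisc. If $\Delta$ has several vertices on $\alpha$, flipping the switch along $\alpha$ toggles all of them, turning good angles on $\alpha$ into bad ones, so you have not arranged $\del\Delta\subset F^{\sharp}$; the proposed fix does not repair this. The annulus-based construction in the paper's approach sidesteps this difficulty entirely, since the piece of $A$ glued to $\Delta$ handles the single bad angle directly without any global switch.
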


The idea of the proof is to consider the component $l$ of $G_1\cap G_2$ that passes through this bad angle.
Let $A$ be the annulus or quadrilateral joining the two copies of $l$ in $F$.
One can build a ($\del$-)compressing disc for $F$ from, if $l$ is a closed curve, two copies of $\Delta$ and one of the components of $A-\Delta$, or if $l$ is an arc, $\Delta$ and a component of $A-\Delta$.
Now as $F$ is incompressible and $\del$-incompressible and $M$ is irreducible and $\del$-irreducible, this disc bounds a ball with a disc of $F$ and possibly a disc in $\del M$.
We can use this to isotope $F$ through a region it bounds with $A$ to a surface of lower weight.

By this claim, each region has either no bad angles or at least two bad angles.
Now, each arc in $D\cap G_2$ contributes one bad angle for each endpoint it has that is not on $\del M$.
Let $m$ be the number of arcs that have no endpoints on $\del M$, and $n$ be the number with one endpoint on $\del M$.
There are $m+n+1$ regions in $D$, and $2m+n$ bad angles.
Thus, by the pigeonhole principle, there is a region $\Delta$ with at most one bad angle, which hence has no bad angles.

This region is a compression or $\del$-compression disc for $F$ itself, which is incompressible and $\del$-incompressible, so $\Delta\cap F$ bounds a disc $\Delta'$ in $F$.
We sketch the ideas of Lemma 4.1.34 of~\cite{Matveev}, which deals with this situation.
First, $\Delta'$ is decomposed by the trace curves into smaller regions.
As there are no disc patches, none of these trace curves are closed curves or are arcs with both endpoints on $\del M$.
Thus, taking an outermost trace curve, there is a region $\Delta'_0$ of $\Delta'$ whose boundary is one trace curve segment, one segment of $\Delta\cap F$, and possibly one segment from $\del M$.

Now, as shown in Lemma 4.1.35 of~\cite{Matveev}, if this region is in $G_1$, so $\del \Delta'_0 = \Delta'_0 \cap (G_2\cup \Delta \cup \del M)$, then as $M$ is irreducible we can isotope $D$ through $\Delta'_0$ to remove an arc of intersection with $G_2$.
If this region is in $G_2$, then compress $\Delta$ along $\Delta'_0$, removing an intersection. At least one of the resulting discs is essential and gives a non-trivial compression or $\del$-compression disc.
Either of these reduces the weight of $\Delta$.
Continuing this procedure, we eventually get a non-trivial compression or $\del$-compression disc for $G_1$ that is disjoint from $G_2$.
But then its boundary is contained in a patch of $G_1$, which is incompressible and $\del$-incompressible, so this is a contradiction.
\end{proof}

\section{Proof of Lemma~\ref{lemma:cutalongsurfacestriangulation}}
\label{appendix:cutalongsurfacetriangulation}
The proof of this result follows arguments of Lackenby and Haraway-Hoffman.
We will principally use the following result of Lackenby.

\begin{theorem}[\cite{LackenbyCertificationKnottedness}]
\label{thm:lackenbycutsurface}
   There is an algorithm that takes, as its input,
   \begin{enumerate}
       \item a triangulation $\mathcal{T}$ for a compact orientable manifold $M$, and
       \item a vector $(F)$ for an orientable normal surface $F$ with no two components normally isotopic
   \end{enumerate}
   and outputs, if $\mathcal{P}$ is the union of parallelity handles in $\mathcal{T}\backslash\backslash F$, 
   \begin{enumerate}
       \item a handle structure for $(\mathcal{T}\backslash\backslash F)-\mathcal{P}$;
       \item for each component of $\mathcal{P}$, the genus and number of boundary components of its base surface, whether the component is a product or a twisted product, and the annulus in $(\mathcal{T}\backslash\backslash F)-\mathcal{P}$ that is its boundary.
   \end{enumerate}
   Furthermore, this algorithm runs in time polynomial in $\norm{\mathcal{T}}$ and the weight of $F$.
\end{theorem}

The idea of the proof of Theorem~\ref{thm:lackenbycutsurface} is as follows.
Let $\mathcal{H}$ be the dual handle structure to $\mathcal{T}.$
By examining which elementary disc types are present in $F$, we can enumerate the handles of $(\mathcal{H}\backslash\backslash F)-\mathcal{P}$.
There are at most $O(\norm{\mathcal{T}})$ of them: six from each tetrahedron.
By considering the entries of the vector for $F$, we can identify when these handles are glued together.
For $\mathcal{P}$, Lackenby uses Proposition~\ref{prop:agolhassthurstonnormalsurface} to determine the topological type of the parallelity regions.

We now turn to work of Haraway and Hoffman.
As the handles are subtetrahedral split handles, the 0-handles have a uniformly bounded number of combinatorial types, so we can triangulate each of them so that their boundary graph is simplicial and so that each component of $\mathcal{P}\cap (\mathcal{H}\backslash\backslash F-\mathcal{P})$, which is naturally a square, is triangulated using two triangles.
In total this will use a linear number of tetrahedra in $\norm{\mathcal{T}}$.
For $\mathcal{P}$, by Theorem~\ref{thm:lackenbycutsurface} we know the topology of each component and the annuli in $(\mathcal{H}\backslash\backslash F)-\mathcal{P}$ to which it is glued.
It suffices to show that we can triangulate $\mathcal{P}$ in polynomial time.
This is the matter that Haraway and Hoffman concern themselves with in the proof of Proposition~\ref{prop:cutalongsurfacetriangulation}.

\begin{prop}[Proposition 13~\cite{HarawayHoffman}]
\label{prop:cutalongsurfacetriangulation}
There is an algorithm that takes as its input a triangulation $\mathcal{T}$ of a compact orientable 3-manifold and a connected normal surface $F$ in $\mathcal{T}$ (given as a vector in $\ZZ^{7\norm{\mathcal{T}}}$), and outputs a triangulation of an exterior of $F$ whose size is bounded by a polynomial in $\norm{\mathcal{T}}$, $\log w(F)$, and the Euler characteristic of $F$, $\chi(F)$.
This algorithm runs in polynomial time in those same three parameters.
\end{prop}

In the proof (which is itself confined to an appendix), they show that the Euler characteristic of each parallelity region is bounded by a linear function of $\norm{\mathcal{T}}$ and $|\chi_-(F)|$, where $\chi_-(F)$ is the sum of the Euler characteristics of the components of $F$ whose Euler characteristic is negative.
They do this by, for each parallelity region, observing that its Euler characteristic comes from three parts: genus (which is bounded by that of $F$ and hence by $|\chi_-(F)|$), boundary components of $F$ (also controlled by $|\chi_-(F)|$), and boundary components in the interior of $F$.
The number of boundary components in the interior of $F$ is bounded by the number of boundary components of $(\mathcal{H}\backslash\backslash F)-\mathcal{P}$, which is at most a linear function of $\norm{\mathcal{T}}$.
Let $\chi$ be the least Euler characteristic of the components of $F$; as by Lemma~\ref{lemma:kneserhaken}, there are at most $26\cdot 13!\norm{\mathcal{T}}$ components of $F$, $|\chi_-(F)|$ is at most $26\cdot 13!|\chi|\norm{\mathcal{T}}$.
Thus the Euler characteristic of the base surface of each parallelity region is bounded by a polynomial function in $|\chi|$ and $\norm{\mathcal{T}}$.
We can triangulate each base surface with a number of triangles proportional to its Euler characteristic, then barycentrically subdivide the product of this triangulation with an interval to obtain a triangulation of the parallelity region in question.
As the boundary squares of each parallelity region are thus endowed with a standard triangulation, which is the first barycentric subdivision of a square, and the triangulation graph of a polygon is connected, we can use a uniform number of additional tetrahedra -- corresponding to a uniform number of Pachner moves -- to walk from this triangulation to either of the two triangulations of the square using two triangles.
Some choice of those is compatible with the previously-chosen triangulation of the attaching annuli of the parallelity region.

\printbibliography

\end{document}